\newcommand\err{\texttt{err}}
\newcommand\eff{\texttt{eff}}
\def\O{\Omega}
\renewcommand\sp{\mathop{\mathrm{Sp}}\nolimits}
\newcommand{\jump}[1]{\llbracket #1 \rrbracket}
\newcommand{\n}{\boldsymbol{n}}
\newcommand\bu{\boldsymbol{u}}
\newcommand\bv{\boldsymbol{v}}
\newcommand\bw{\boldsymbol{w}}
\newcommand\curl{\mathop{\mathbf{curl}}\nolimits}
\def\hdel{\widehat{\delta}}
\def\CM{\mathcal{X}}
\def\CN{\mathcal{Y}}
\newcommand\bF{\boldsymbol{f}}
\newcommand\bI{\boldsymbol{I}}
\newcommand\bT{\boldsymbol{T}}
\newcommand\bQ{\boldsymbol{Q}}
\newcommand\0{\mathbf{0}}
\def\CT{{\mathcal T}}
\renewcommand\H{\mathrm{H}}
\renewcommand\L{\mathrm{L}}
\renewcommand\O{\Omega}
\newcommand\bdiv{\mathop{\mathbf{div}}\nolimits}
\renewcommand\div{\mathop{\mathrm{div}}\nolimits}
\renewcommand\sp{\mathop{\mathrm{sp}}\nolimits}
\newcommand\dist{\mathop{\mathrm{dist}}\nolimits}
\newcommand{\vertiii}[1]{{\left\vert\kern-0.25ex\left\vert\kern-0.25ex\left\vert #1 
    \right\vert\kern-0.25ex\right\vert\kern-0.25ex\right\vert}}
\crefname{hypothesis}{Hypothesis}{Hypotheses}
\title{Finite element analysis for the Navier-Lam\'e eigenvalue problem\thanks{Submitted to the editors DATE.
\funding{The first author was partially supported by
	DIUBB through project 2120173 GI/C Universidad del B\'io-B\'io and ANID-Chile through FONDECYT project 11200529 (Chile).
	The second author was supported by Universidad de Los Lagos Regular R02/21.  The third author was partially supported by ANID-Chile through project Anillo of Computational Mathematics for Desalination Processes ACT210087.
}}}
\author{Felipe Lepe\thanks{GIMNAP-Departamento de Matem\'atica, Universidad del B\'io - B\'io, Casilla 5-C, Concepci\'on, Chile. \email{flepe@ubiobio.cl}.}
\and Gonzalo Rivera\thanks{Departamento de Ciencias Exactas,
	Universidad de Los Lagos, Casilla 933, Osorno, Chile. \email{gonzalo.rivera@ulagos.cl}.}
\and Jesus Vellojin\thanks{GIMNAP-Departamento de Matem\'atica, Universidad del B\'io - B\'io, Casilla 5-C, Concepci\'on, Chile. \email{jvellojin@ubiobio.cl}.}}
\begin{document}

\maketitle

\begin{abstract}
The present paper introduces the analysis of the eigenvalue problem for the elasticity equations when the so called Navier-Lam\'e
system is considered. Such a system introduces the displacement, rotation and pressure of some linear and elastic structure. The analysis of the spectral problem is based in the compact operators theory. A finite element method based in polynomials
of degree $k\geq 1$ are considered  in order to approximate the eigenfrequencies and eigenfunctions of the system. Convergence and error estimate are presented. An a posteriori error analysis is performed, where the reliability and efficiency of the proposed estimator is proved. We end this contribution reporting a series of numerical tests in order to assess the performance of the proposed numerical method, for the a priori and a posteriori estimates.

\end{abstract}

\begin{keywords}
  Elasticity equations, eigenvalue problems,  error estimates,  mixed problems
\end{keywords}

\begin{AMS}
  35Q35,  65N15, 65N25, 65N30, 65N50
\end{AMS}

\section{Introduction}\label{sec:intro}

Different approaches to analyze the elasticity equations have been analyzed in the past years, since the stability of different
structures   plays an important role for the design and construction of more complex structures. The displacement of elastic structures, when external forces are applied, reveals the importance of how structures manifest their behaviors depending on their physical
features, which are given by the Lam\'e constants. 

To handle the linear elasticity equations, several formulations and numerical methods have emerged in the past years.  In this sense, mixed formulations are a path to avoid the locking phenomenon in the computational implementation that arises when the Lam\'e constant $\lambda\rightarrow\infty$, leading to the so called nearly incompressible elasticity equations.

In the literature, an important number of contributions related to mixed formulations for elasticity are available such as \cite{MR3872756, MR4461583, MR2293249, MR3452773, MR3453481, MR3361707}, just to mention some of them. These references have the particularity that the results are focused in the load problems. These contributions are  relevant  for our
work in spectral problems, since in this context, the analysis of  load problems provide a source of techniques that can be extended for the eigenvalue approach.

The research related to mixed formulations for the elasticity spectral problem is an ongoing subject and different formulations and methods have emerged, as \cite{MR4279087, inzunza2021displacementpseudostress, MR3962898,  LRV_JSCV, MR3036997}, where these formulations and numerical methods  are not only  interested in the approximation of the displacement of the structure, but also on other quantities as the pseudostress, the Cauchy stress tensor, the rotations, etc. Moreover, since these formulations are mixed, the locking phenomenon is avoided and the spectrum of the solution operators is approximated safely with any numerical method.

In the present work, we continue with the analysis of mixed formulations for the elasticity equations and the associated eigenvalue problem, where the formulation analyzed in  \cite{MR3872756, MR4461583} is considered. These works concern the Navier-Lam\'e formulation of the elasticity equations, where not only the displacement of the structure is the unknown, but also the vorticity and pressure of the structure. The proposed mixed  formulation, compared with the one analyzed in \cite{inzunza2021displacementpseudostress}, avoids the $\H(\div)$ spaces and allows to handle more simple Hilbert spaces as $\L^2$ and $\H^1$. The advantage behind this approach is the relaxation of the finite element spaces where the solutions are approximated by means of  continuous polynomials for the displacement and discontinuous elements for the pressure and vorticity. The above establishes a difference with respect to the use of Raviart-Thomas or Brezzi-Douglas-Marini elements, classic in the $\H(\div)$ approach.

With this Navier-Lam\'e eigensystem, our goal is to analyze a mixed finite element method for which we obtain a priori and a posteriori error estimates. These two analysis are important since the a priori analysis not only gives us the basic convergence results and error estimates, but also the spurious free feature of the method, whereas the a posteriori analysis takes relevance when the eigenfunctions of the spectral problem are not smooth enough when the geometries of certain domains or physical parameters are not sufficiently suitable to recover the optimal order of convergence given by the a priori analysis. The a priori analysis is based on the classic theory of compact operators given by \cite{MR1115235}, where the convergence of eigenvalues and eigenfunctions is obtained with standard arguments. Also, we derive an error estimate for the displacement of the structure in the  $\L^2$ norm, with a duality argument. Controlling this error is important since it arises naturally when performing the analysis of the reliability and efficiency of the a posteriori estimator.

The paper is organized in the following way: In Section \ref{sec:model_problem} we present the model problem, where we introduce the classic elasticity eigenvalue problem and, with some algebraic manipulations, we write the Navier-Lam\'e system of our interest. We define the bilinear forms that we need for the analysis and recall a  well posedness result, fundamental to introduce the solution operator that will be involved along the manuscript. A spectral characterization of this operator is presented. In Section \ref{sec:fem} we introduce the tools for the numerical analysis, namely, definitions and properties associated to  the meshes, the finite element spaces, the discrete eigenvalue problem, and the discrete solution operator. This allows us to conclude a  convergence result. This section contains the core of our paper, since the a priori and a posteriori analysis are reported, where we prove basic error estimates for the eigenvalues and eigenfunctions, together with the analysis of an a posteriori error estimator, whose reliability and efficiency is proven. Finally, in Section \ref{sec:numerics}, we present a series of numerical tests in order to confirm the theoretical results that we present. These results, that we perform in two and three dimensions, show that the a priori error estimates are attained at computational level, and that the a posteriori error estimator that we propose is capable to perform an adaptive refinement in order to recover the optimal order of convergence, for nonregular eigenfunctions.

\section{The model problem}
\label{sec:model_problem}

Let  $\O\subset\mathbb{R}^n$,  with $n\in\{2,3\}$,  be an open bounded domain with Lipschitz boundary $\partial\O$. 
To derive the problem of our interest, we need to recall the elasticity eigenvalue problem given as follows
\begin{equation}\label{def:navier_lame}
\left\{
\begin{array}{rcll}
\bdiv(2\mu_s\boldsymbol{\varepsilon}(\bu)+\lambda_s\div\bu\mathbf{I})&=&-\kappa\bu&\text{in}\,\O,\\
\bu&=&\boldsymbol{0}&\text{on}\,\partial\O,
\end{array}
\right.
\end{equation}
where $\bu$ is the displacement, $\mathbf{I}\in\mathbb{R}^{n\times n}$ is the identity tensor, $\lambda_s$ and $\mu_s$ are de Lam\'e constants and $\boldsymbol{\varepsilon}(\cdot)$ is the strain tensor given by $\boldsymbol{\varepsilon}(\bu):=\frac{1}{2}(\nabla\bu+(\nabla\bu)^{\texttt{t}})$. For the derivation of the Navier-Lam\'e system, as a first ingredient, we need the following identity
$$
\bdiv(\boldsymbol{\varepsilon}(\bu))=\frac{1}{2}(\Delta\bu+\nabla(\div\bu)).
$$
With this identity at hand, and replacing it in \eqref{def:navier_lame}, we arrive to the following system
$$
\left\{
\begin{array}{rcll}
\mu_s\Delta\bu+(\mu_s+\lambda_s)\nabla(\div\bu)&=&-\kappa\bu&\text{in}\,\O,\\
\bu&=&\boldsymbol{0}&\text{on}\,\partial\O.
\end{array}
\right.
$$

Now, defining the pressure $p$ by $p:=-(2\mu_s+\lambda_s)\div\bu$, we obtain the following displacement-pressure system
$$
\left\{
\begin{array}{rcll}
\mu_s\Delta\bu-\nabla p&=&-\kappa\bu&\text{in}\,\O,\\
\div\bu+(2\mu_s+\lambda_s)^{-1}p&=&0&\text{in}\,\O,\\
\bu&=&\boldsymbol{0}&\text{on}\,\partial\O.
\end{array}
\right.
$$
Introducing the field of scaled rotations by $\boldsymbol{\omega}:=\sqrt{\mu_s}\curl\bu$, we obtain the following rotation-pressure-displacement system
\begin{equation}\label{def:lame_system}
	\left\{
	\begin{array}{rcll}
\sqrt{\mu_s}\curl\boldsymbol{\omega}+\nabla p&=&\kappa\bu&\quad\text{in}\,\O,\\
\boldsymbol{\omega}-\sqrt{\mu_s}\curl\bu&=&0&\quad\text{in}\,\O,\\
\div\bu+(2\mu_s+\lambda_s)^{-1}p&=&0&\quad\text{in}\,\O,\\
\bu&=&\boldsymbol{0}&\quad\text{on}\,\partial\O.
	\end{array}
	\right.
\end{equation}

{Now we introduce a variational formulation for \eqref{def:lame_system}. To do this task, we multiply such a system with suitable 
tests functions, integrate by parts, and use the boundary conditions in order to obtain the following bilinear forms $a: \L^2(\O)^{n(n-1)/2}\times \L^2(\O)\rightarrow\mathbb{R}$
and $b:(\L^2(\O)^{n(n-1)/2}\times \L^2(\O))\times\H^1(\O)^n\rightarrow\mathbb{R}$ defined by 
$$
a((\boldsymbol{\omega},p),(\boldsymbol{\theta},q)):=\int_{\Omega}\boldsymbol{\omega}\cdot\boldsymbol{\theta}+(2\mu_s+\lambda_s)^{-1}\int_{\O}pq,
$$
and
$$
b((\boldsymbol{\theta},q),\bu):=\int_{\O}\div\bu q-\sqrt{\mu_s}\int_{\O}\boldsymbol{\theta}\cdot\curl\bu.
$$

With these bilinear forms at hand, we write the following weak formulation for problem \eqref{def:lame_system}: Find $\lambda\in\mathbb{R}$ and $((\boldsymbol{\omega},p),\bu)\in \L^2(\O)^{n(n-1)/2}\times \L^2(\O)\times \H^1(\O)^n$ such that 
\begin{equation}\label{def:lame_system_weak}
	\left\{
	\begin{array}{rcll}
a((\boldsymbol{\omega},p),(\boldsymbol{\theta},q))+b((\boldsymbol{\theta},q),\bu)&=&0&\forall (\boldsymbol{\theta},q)\in \L^2(\O)^{n(n-1)/2}\times \L^2(\O),\\
b((\boldsymbol{\omega},p),\bv)&=&\kappa (\bu,\bv)&\forall\bv\in\H^1(\O)^n.
\end{array}
	\right.
\end{equation}

Now we define the following bilinear form
$$
\begin{aligned}
	\mathcal{A}((\bu,\boldsymbol{\omega},p),(\bv,\boldsymbol{\theta},q))&:=a((\boldsymbol{\omega},p),(\boldsymbol{\theta},q))+b((\boldsymbol{\theta},q),\bu)+b((\boldsymbol{\omega},p),\bv).
	 \end{aligned}
$$
With this definition at hand, the weak eigenvalue problem associated to \eqref{def:lame_system_weak} is stated below.
\begin{problem}
\label{problem1}
Find $\lambda\in\mathbb{R}$ and $\boldsymbol{0}\neq (\bu,\boldsymbol{\omega},p)\in \H_0^1(\Omega)^n\times \L^2(\Omega)^{n(n-1)/2}\times \L^2(\Omega)$, such that 
\begin{equation} 
	\label{eq:formA}
	\mathcal{A}((\bu,\boldsymbol{\omega},p),(\bv,\boldsymbol{\theta},q))=-\kappa(\bu,\bv)_{0,\Omega},
\end{equation}
for all $(\bv,\boldsymbol{\theta},q)\in \H_0^1(\Omega)^n\times \L^2(\Omega)^{n(n-1)/2}\times \L^2(\Omega)$. 
\end{problem}

Since we have a prescribed Dirichlet condition on $\partial\Omega$, then the pressure needs to satisfy the zero-mean condition in order to control the norm in $\L^2(\O)$ as we approach to the incompressible limit $\nu=0.5$. Following \cite{MR4461583}, the pressure is decomposed by  $p=p_0+P_mp$, where $P_mp$ and $p_0$ are the mean value and zero-mean value parts of the pressure, and obtain the following result \cite[Theorem 2.1]{MR4461583}.

\begin{lemma}
\label{lmm:elliptic}
For all $\boldsymbol{0}\neq (\bu,\boldsymbol{\omega},p)\in \H_0^1(\Omega)^n\times \L^2(\Omega)^{n(n-1)/2}\times \L^2(\Omega)$, there exists $(\bv,\boldsymbol{\theta},q)\in \H_0^1(\Omega)^n\times \L^2(\Omega)^{n(n-1)/2}\times \L^2(\Omega)$ with $\vertiii{(\bv,\boldsymbol{\theta},q)}\leq C\vertiii{(\bu,\boldsymbol{\omega},p)}$ such that
$$
\mathcal{A}((\bu,\boldsymbol{\omega},p),(\bv,\boldsymbol{\theta},q))\geq C \vertiii{(\bu,\boldsymbol{\omega},p)}^2,
$$
where the triple norm is defined by
\begin{multline*}
\vertiii{(\bv,\boldsymbol{\theta},q)}^2:=\mu_s\Vert\curl\bv\Vert_{0,\Omega}^2+\mu_s\Vert\div\bv\Vert_{0,\Omega}^2+\Vert\boldsymbol{\theta}\Vert_{0,\Omega}^2\\
+(2\mu_s+\lambda_s)^{-1}\Vert q\Vert_{0,\Omega}^2 + \mu_s^{-1}\Vert q_0\Vert_{0,\Omega}^2.
\end{multline*}
\end{lemma}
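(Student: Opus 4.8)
The statement is a T-coercivity (equivalently, a global inf-sup) property for the saddle-point form $\mathcal{A}$, so my plan is to exhibit, for each given $(\bu,\boldsymbol{\omega},p)$, an explicit test triple $(\bv,\boldsymbol{\theta},q)$ built as a controlled perturbation of it, assembled in three stages with each stage recovering one block of the triple norm. I would first observe that the naive diagonal choice $(\bv,\boldsymbol{\theta},q)=(\bu,\boldsymbol{\omega},p)$ fails: it yields $\|\boldsymbol{\omega}\|_{0,\Omega}^2+(2\mu_s+\lambda_s)^{-1}\|p\|_{0,\Omega}^2$ together with the indefinite, order-one term $2\,b((\boldsymbol{\omega},p),\bu)=2\int_\Omega(\div\bu)\,p-2\sqrt{\mu_s}\int_\Omega\boldsymbol{\omega}\cdot\curl\bu$, whose curl contribution cannot be absorbed (the associated quadratic form in $\|\boldsymbol{\omega}\|_{0,\Omega}$ and $\sqrt{\mu_s}\|\curl\bu\|_{0,\Omega}$ has negative discriminant for every scaling).

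Instead I would take as a base the choice $(\bv,\boldsymbol{\theta},q)=(-\bu,\boldsymbol{\omega},p)$. Since $\mathcal{A}$ contains the two contributions $b((\boldsymbol{\theta},q),\bu)+b((\boldsymbol{\omega},p),\bv)$, flipping the sign of $\bv$ makes them cancel exactly, leaving precisely $a((\boldsymbol{\omega},p),(\boldsymbol{\omega},p))=\|\boldsymbol{\omega}\|_{0,\Omega}^2+(2\mu_s+\lambda_s)^{-1}\|p\|_{0,\Omega}^2$, that is, two of the five norm contributions with unit constants and no leftover cross terms. To recover $\mu_s\|\curl\bu\|_{0,\Omega}^2+\mu_s\|\div\bu\|_{0,\Omega}^2$, I would add to $(\boldsymbol{\theta},q)$ the increment $\eta\bigl(-\sqrt{\mu_s}\curl\bu,\ \mu_s\div\bu\bigr)$ with a small parameter $\eta>0$: inserting it into $b((\boldsymbol{\theta},q),\bu)$ produces exactly $\eta\mu_s\bigl(\|\curl\bu\|_{0,\Omega}^2+\|\div\bu\|_{0,\Omega}^2\bigr)$, at the price of $O(\eta)$ cross terms coming from $a$. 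To recover the last term $\mu_s^{-1}\|p_0\|_{0,\Omega}^2$, I would invoke the classical surjectivity of the divergence from $\H_0^1(\Omega)^n$ onto the zero-mean functions of $\L^2(\Omega)$ to pick $\bw\in\H_0^1(\Omega)^n$ with $\div\bw=\mu_s^{-1}p_0$ and $\|\bw\|_{1,\Omega}\lesssim\mu_s^{-1}\|p_0\|_{0,\Omega}$, and replace $\bv$ by $-\bu+\gamma\bw$ for a second small parameter $\gamma>0$; the term $\gamma\,b((\boldsymbol{\omega},p),\bw)$ then contributes $\gamma\mu_s^{-1}\|p_0\|_{0,\Omega}^2$ up to an $O(\gamma)$ cross term. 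A key structural fact used throughout is the identity $\int_\Omega(\div\bu)\,p=\int_\Omega(\div\bu)\,p_0$, valid because $\div\bu$ has zero mean for $\bu\in\H_0^1(\Omega)^n$; this is precisely what lets the pressure cross terms be controlled by $\mu_s^{-1}\|p_0\|_{0,\Omega}^2$ rather than by the full $\|p\|_{0,\Omega}^2$, explaining the presence of $\mu_s^{-1}\|p_0\|_{0,\Omega}^2$ in the triple norm.

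The remaining work, and the main obstacle, is quantitative and uniform: one must choose $\eta$ and $\gamma$ (for instance, $\gamma$ comparable to $\eta$, both small) so that every $O(\eta)$ and $O(\gamma)$ cross term is absorbed by Young's inequality into the coercive terms, while keeping the constants independent of the Lam\'e parameters, in particular as $\lambda_s\to\infty$. The weights $\sqrt{\mu_s}$, $\mu_s$, $\mu_s^{-1}$ attached to the increments are tuned exactly so that, after using $(2\mu_s+\lambda_s)^{-1}\mu_s\le\tfrac12$, all cross terms carry matching powers of $\mu_s$ and the debits against $\mu_s\|\div\bu\|_{0,\Omega}^2$, $\mu_s\|\curl\bu\|_{0,\Omega}^2$, $\|\boldsymbol{\omega}\|_{0,\Omega}^2$ and $\mu_s^{-1}\|p_0\|_{0,\Omega}^2$ stay strictly below the corresponding credits, with the resulting constant $C$ independent of $\lambda_s$. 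Finally, these same weights immediately give the boundedness $\vertiii{(\bv,\boldsymbol{\theta},q)}\le C\vertiii{(\bu,\boldsymbol{\omega},p)}$, since each increment is controlled in the triple norm by the matching block of $\vertiii{(\bu,\boldsymbol{\omega},p)}$ (e.g.\ $\|\boldsymbol{\theta}\|_{0,\Omega}^2\lesssim\|\boldsymbol{\omega}\|_{0,\Omega}^2+\eta^2\mu_s\|\curl\bu\|_{0,\Omega}^2$, and likewise for $q$, $q_0$, and the curl/div of $\bv$); combining the lower bound on $\mathcal{A}$ with this stability estimate and taking the smaller of the two constants completes the argument.
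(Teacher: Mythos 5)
Your argument is correct, and it is worth noting that it supplies something the paper itself does not contain: the paper never proves Lemma \ref{lmm:elliptic}, but imports it verbatim, after introducing the splitting $p=p_0+P_mp$, from Theorem 2.1 of \cite{MR4461583}. Your construction is the standard self-contained T-coercivity argument for this kind of saddle-point form, and all of its pieces check out. The sign flip $\bv=-\bu$ cancels the two off-diagonal contributions $b((\boldsymbol{\theta},q),\bu)+b((\boldsymbol{\omega},p),\bv)$ exactly, leaving $\|\boldsymbol{\omega}\|_{0,\O}^2+(2\mu_s+\lambda_s)^{-1}\|p\|_{0,\O}^2$; the increment $\eta(-\sqrt{\mu_s}\curl\bu,\;\mu_s\div\bu)$ inserted in $b(\cdot,\bu)$ produces exactly $\eta\mu_s\bigl(\|\curl\bu\|_{0,\O}^2+\|\div\bu\|_{0,\O}^2\bigr)$; the Bogovskii-type field $\bw$ with $\div\bw=\mu_s^{-1}p_0$ contributes $\gamma\mu_s^{-1}\|p_0\|_{0,\O}^2$, the orthogonality of $p_0$ to constants (and the zero mean of $\div\bu$ for $\bu\in\H_0^1(\O)^n$) being precisely what makes the $\|p_0\|$ block, rather than the full $\|p\|$, appear; and the remaining cross terms are all $O(\eta)+O(\gamma)$ and absorbable by Young's inequality, with $\mu_s(2\mu_s+\lambda_s)^{-1}\le\tfrac12$ guaranteeing uniformity in $\lambda_s$, exactly as you say. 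Two bookkeeping points you defer should be made explicit in a full write-up: the choice of $\gamma$ must account for the domain-dependent constant $C_\O$ in the divergence surjectivity (so $\gamma\sim C_\O^{-2}$, not merely ``small''), and if you route the pressure cross term through $p_0$ you need the coupling $\eta\lesssim\gamma$, whereas bounding it directly against $(2\mu_s+\lambda_s)^{-1}\|p\|_{0,\O}^2$ and $\eta\mu_s\|\div\bu\|_{0,\O}^2$ avoids that coupling. Compared with the paper's citation, your route costs a page of elementary estimates but buys a self-contained proof in which the independence of the coercivity constant from $\lambda_s$ --- the property on which the whole locking-free analysis rests --- is visible rather than delegated to the literature.
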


Let us define the solution operator associated to the eigenvalue problem. Let $\bT$ be such an operator, which is linear and defined by 
$$
 \bT:\H_0^1(\Omega)^n\rightarrow\H_0^1(\Omega)^n,\,\,\,\bF\mapsto\bT\bF:=\widetilde{\bu},
$$
where the triplet $(\widetilde{\bu},\widetilde{\boldsymbol{\omega}},\widetilde{p})$ is the solution of the following well posed source problem
\begin{equation} 
\label{eq:source}
	\mathcal{A}((\widetilde{\bu},\widetilde{\boldsymbol{\omega}},\widetilde{p}),(\bv,\boldsymbol{\theta},q))=-(\bF,\bv)_{0,\O},\\
\end{equation}
for all $(\bv,\boldsymbol{\theta},q)\in \H_0^1(\Omega)^n\times \L^2(\Omega)^{n(n-1)/2}\times \L^2(\Omega)$. 
As a consequence of Lemma \ref{lmm:elliptic}, an application of the Bab\v uska-Brezzi theory reveals that the  operator $\bT$ is well defined and bounded.
Observe that $\bT$   is   selfadjoint with respect to the $\L^2(\O)$ inner product. Indeed,  let 
$\bF$ and $\widetilde{\bF}$ be such that $\bu=\bT\bF$ and $\widetilde{\bu}=\bT\widetilde{\bF}$. Then, from the symmetry of $a(\cdot,\cdot)$  we have 
\begin{multline*}
(\bF,\bT\widetilde{\bF})_{0,\O}=(\bF,\widetilde{\bu})_{0,\O}= b((\boldsymbol{\omega},p),\widetilde{\bu})=-a((\widetilde{\boldsymbol{\omega}},\widetilde{p}),(\boldsymbol{\omega},p))\\
=-a((\boldsymbol{\omega},p),(\widetilde{\boldsymbol{\omega}},\widetilde{p}))=b((\widetilde{\boldsymbol{\omega}},\widetilde{p}),\bu)=(\widetilde{\bF},\bu)_{0,\O}=(\widetilde{\bF},\bT\bF)_{0,\O}=(\bT\bF,\widetilde{\bF})_{0,\O}.
\end{multline*}

Let $\eta$ be a real number such that  $\eta\neq 0$. Notice that $(\eta,\boldsymbol{u})\in \mathbb{R}\times\H_0^1(\Omega)^n$ is an eigenpair of $\bT$ if and only if  there exists $(\boldsymbol{\omega},p)\in \L^2(\O)^{n(n-1)/2}\times\L^2(\O)$ such that,  $(\kappa,\boldsymbol{u},\boldsymbol{\omega},p)$ solves Problem \ref{problem1}  with $\eta:=1/\kappa$.

A key ingredient for the numerical analysis of Problem \ref{problem1} is  the following regularity result  for the source problem, which is a direct consequence of Lemma \ref{lmm:elliptic} (see \cite[Theorem 2.1]{MR3872756}) and the well-known regularity for the elasticity equations (see, for instance, \cite{grisvard1986problemes} and \cite[Theorem 5.2]{rossle2000corner}).
\begin{theorem}[Regularity]
\label{th:regularity}
System \eqref{eq:source} has a unique solution and there exists a positive constant $C$ such that
$$
\vertiii{(\widetilde{\bu},\widetilde{\boldsymbol{\omega}},\widetilde{p})}\leq C \|\bF\|_{0,\O}.
$$
Morever, there exists $\widehat{s}\in(0,1]$ such that for all $s\in(0,\widehat{s})$, we have $\widetilde{\bu}\in \H^{1+s}(\O)^n$ and 
$$
\|\widetilde{\bu}\|_{1+s,\Omega}+\|\widetilde{\boldsymbol{\omega}}\|_{s,\O}+\|\widetilde{p}\|_{s,\O}\leq \widehat{C} \|\bF\|_{0,\O},
$$
where $\widehat{C}>0$ which in principle depends on $\lambda_s$.
On the other hand, the regularity for the eigenfunctions is as follows
$$
\|\bu\|_{1+s,\Omega}+\|\boldsymbol{\omega}\|_{s,\O}+\|p\|_{s,\O}\leq \widehat{C} \|\bu\|_{0,\O}.
$$
\end{theorem}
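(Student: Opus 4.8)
The plan is to establish the regularity result in three distinct pieces, each relying on a different tool already available in the excerpt. First, the claim that system \eqref{eq:source} has a unique solution satisfying $\vertiii{(\widetilde{\bu},\widetilde{\boldsymbol{\omega}},\widetilde{p})}\leq C\|\bF\|_{0,\O}$ is the immediate payoff of the Bab\v uska--Brezzi machinery: Lemma \ref{lmm:elliptic} furnishes the global inf-sup (equivalently, $\mathcal{A}$-stability) condition on the product space, and the right-hand side $(\bF,\bv)_{0,\O}$ is clearly a bounded linear functional with norm controlled by $\|\bF\|_{0,\O}$ (using that the $\L^2$ pairing is dominated by the $\H^1$ part of the triple norm). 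So I would simply invoke Lemma \ref{lmm:elliptic} together with the standard well-posedness theorem for the saddle-point form $\mathcal{A}$, reading off the stability constant from the lower bound $C\vertiii{(\widetilde{\bu},\widetilde{\boldsymbol{\omega}},\widetilde{p})}^2\leq \mathcal{A}(\cdots)=-(\bF,\widetilde{\bu})_{0,\O}\leq \|\bF\|_{0,\O}\|\widetilde{\bu}\|_{0,\O}$.

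Second, for the shift in regularity $\widetilde{\bu}\in\H^{1+s}(\O)^n$, the strategy is to return to the original second-order elasticity system \eqref{def:navier_lame}, of which \eqref{def:lame_system} is an equivalent first-order reformulation. Having established existence of $(\widetilde{\bu},\widetilde{\boldsymbol{\omega}},\widetilde{p})$, I would recognize that $\widetilde{\bu}$ solves the elasticity source problem $\bdiv(2\mu_s\boldsymbol{\varepsilon}(\widetilde{\bu})+\lambda_s\div\widetilde{\bu}\mathbf{I})=-\bF$ with homogeneous Dirichlet data, and then apply the classical elliptic regularity theory for the Lam\'e operator on domains with Lipschitz (in particular, corner) boundaries. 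This is exactly what the cited references \cite{grisvard1986problemes} and \cite[Theorem 5.2]{rossle2000corner} provide: there is a Sobolev exponent $\widehat{s}\in(0,1]$, dictated by the geometry of $\partial\O$ and the Lam\'e coefficients, such that the displacement gains smoothness up to $\H^{1+s}$ for every $s<\widehat{s}$. The bounds on $\widetilde{\boldsymbol{\omega}}$ and $\widetilde{p}$ in the $\H^s$ norm then follow from their defining relations $\widetilde{\boldsymbol{\omega}}=\sqrt{\mu_s}\curl\widetilde{\bu}$ and $\widetilde{p}=-(2\mu_s+\lambda_s)\div\widetilde{\bu}$, since $\curl$ and $\div$ are first-order operators that map $\H^{1+s}$ boundedly into $\H^s$; this is where the constant $\widehat{C}$ inherits its dependence on $\lambda_s$.

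Third, for the eigenfunction regularity, the key observation is the spectral correspondence already noted before the theorem: if $(\bu,\boldsymbol{\omega},p)$ is an eigenfunction of Problem \ref{problem1} with eigenvalue $\kappa$, then $\bu=\kappa\,\bT\bu$, i.e.\ $\bu$ is itself the solution of the source problem \eqref{eq:source} with datum $\bF=\kappa\bu$. I would therefore apply the source-problem estimate already proved, with this particular choice of right-hand side, obtaining $\|\bu\|_{1+s,\O}+\|\boldsymbol{\omega}\|_{s,\O}+\|p\|_{s,\O}\leq \widehat{C}\,\|\kappa\bu\|_{0,\O}$ and absorbing the eigenvalue $\kappa$ into the constant. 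The main obstacle in the whole argument is the second step: one must be careful that the elliptic-regularity exponent $\widehat{s}$ quoted from the literature genuinely applies to the \emph{mixed} Navier-Lam\'e unknowns and not merely to a pure-displacement formulation, and that the estimate is uniform as $\lambda_s\to\infty$ only in the $\vertiii{\cdot}$ norm, not necessarily in the separate $\H^s$ norms --- hence the explicit caveat that $\widehat{C}$ depends on $\lambda_s$. Making the passage between the first-order system and the classical second-order regularity theory rigorous, while tracking this parameter dependence, is where the real work lies; the first and third steps are essentially bookkeeping once the elliptic shift is in hand.
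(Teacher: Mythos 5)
Your proposal follows essentially the same route as the paper: the paper disposes of this theorem with a single citation sentence, obtaining unique solvability and the triple-norm stability bound from Lemma \ref{lmm:elliptic} via the Bab\v uska--Brezzi theory, and quoting the classical corner-regularity results of \cite{grisvard1986problemes} and \cite[Theorem 5.2]{rossle2000corner} for the second-order Lam\'e operator to obtain the $\H^{1+s}$ shift; your reduction of the mixed solution to the displacement formulation via $\widetilde{\boldsymbol{\omega}}=\sqrt{\mu_s}\curl\widetilde{\bu}$, $\widetilde{p}=-(2\mu_s+\lambda_s)\div\widetilde{\bu}$, and your eigenfunction estimate via $\bu=\bT(\kappa\bu)$ (absorbing $\kappa$ into the constant), are exactly the details the paper leaves implicit.

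One slip worth fixing in your first step: the displayed chain
$C\vertiii{(\widetilde{\bu},\widetilde{\boldsymbol{\omega}},\widetilde{p})}^2\leq \mathcal{A}(\cdots)=-(\bF,\widetilde{\bu})_{0,\O}$
reads as if you test with the solution itself, which is not permitted because $\mathcal{A}$ is \emph{not} coercive on the product space. Indeed, taking $q=0$ and $\boldsymbol{\theta}=\sqrt{\mu_s}\curl\bv$ gives
$\mathcal{A}((\bv,\boldsymbol{\theta},0),(\bv,\boldsymbol{\theta},0))=-\mu_s\|\curl\bv\|_{0,\O}^2\le 0$,
so the quadratic form can be negative; this is precisely why Lemma \ref{lmm:elliptic} supplies a \emph{different} test triple. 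The correct chain is
$C\vertiii{(\widetilde{\bu},\widetilde{\boldsymbol{\omega}},\widetilde{p})}^2\leq\mathcal{A}((\widetilde{\bu},\widetilde{\boldsymbol{\omega}},\widetilde{p}),(\bv,\boldsymbol{\theta},q))=-(\bF,\bv)_{0,\O}\leq\|\bF\|_{0,\O}\|\bv\|_{0,\O}\leq C'\|\bF\|_{0,\O}\vertiii{(\bv,\boldsymbol{\theta},q)}\leq C''\|\bF\|_{0,\O}\vertiii{(\widetilde{\bu},\widetilde{\boldsymbol{\omega}},\widetilde{p})}$,
where $(\bv,\boldsymbol{\theta},q)$ is the test function from Lemma \ref{lmm:elliptic}, and where $\|\bv\|_{0,\O}$ is dominated by the triple norm through the Poincar\'e inequality and the identity $\|\nabla\bv\|_{0,\O}^2=\|\curl\bv\|_{0,\O}^2+\|\div\bv\|_{0,\O}^2$ valid for $\bv\in\H_0^1(\O)^n$. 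With that reading, the remainder of your argument stands.
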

Hence, $\bT$ is a compact operator thanks to the compact inclusion of $\H^{1+s}(\O)^n$ onto $\H^1(\O)^n$.
Now we are in position to present the following spectral characterization for $\bT$.
\begin{theorem}[Spectral characterization]
The spectrum of the operator $\bT$ decomposes by  $\sp(\bT):=\{\mu_k\}_{k\in\mathbb{N}}\cup\{0\}$, where $\{\mu_k\}_{k\in\mathbb{N}}$
is a sequence of positive eigenvalues converging to zero. These eigenvalues are repeated according to their respective multiplicities.
\end{theorem}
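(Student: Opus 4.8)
The plan is to deduce the statement from the classical spectral theory of compact, self-adjoint, positive operators (see \cite{MR1115235}), observing that $\bT$ already carries every structural feature required except positivity. Compactness is in hand: by Theorem \ref{th:regularity} the solution $\bT\bF=\wbu$ lies in $\H^{1+s}(\O)^n$, and the compact embedding $\H^{1+s}(\O)^n\hookrightarrow\H_0^1(\O)^n$ makes $\bT$ compact, as already noted. The Riesz--Schauder theory then guarantees that $\sp(\bT)\setminus\{0\}$ consists only of eigenvalues of finite multiplicity whose sole possible accumulation point is $0$, and that $0\in\sp(\bT)$ because $\H_0^1(\O)^n$ is infinite-dimensional. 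The self-adjointness of $\bT$ with respect to $(\cdot,\cdot)_{0,\O}$, verified just before the statement, forces every eigenvalue to be real and makes algebraic and geometric multiplicities coincide, so that repeating the eigenvalues according to multiplicity is unambiguous.

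The substantive step is to prove that $\bT$ is positive definite. For $\bF\neq\boldsymbol{0}$, let $(\wbu,\widetilde{\boldsymbol{\omega}},\widetilde p)$ solve the source problem \eqref{eq:source}, so that $\wbu=\bT\bF$. Choosing the test function $(\boldsymbol{0},\widetilde{\boldsymbol{\omega}},\widetilde p)$ in \eqref{eq:source} isolates the identity $a((\widetilde{\boldsymbol{\omega}},\widetilde p),(\widetilde{\boldsymbol{\omega}},\widetilde p))+b((\widetilde{\boldsymbol{\omega}},\widetilde p),\wbu)=0$, whereas the choice $(\wbu,\boldsymbol{0},0)$ gives $b((\widetilde{\boldsymbol{\omega}},\widetilde p),\wbu)=-(\bF,\wbu)_{0,\O}$. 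Eliminating the common $b$-term leads to
$$
(\bT\bF,\bF)_{0,\O}=(\wbu,\bF)_{0,\O}=a((\widetilde{\boldsymbol{\omega}},\widetilde p),(\widetilde{\boldsymbol{\omega}},\widetilde p))=\|\widetilde{\boldsymbol{\omega}}\|_{0,\O}^2+(2\mu_s+\lambda_s)^{-1}\|\widetilde p\|_{0,\O}^2\geq 0 .
$$
To upgrade this to strict positivity, I would observe that vanishing of the right-hand side forces $\widetilde{\boldsymbol{\omega}}=\boldsymbol{0}$ and $\widetilde p=0$; substituting this back into \eqref{eq:source} leaves $b((\boldsymbol{\theta},q),\wbu)=-(\bF,\bv)_{0,\O}$ for all admissible $(\bv,\boldsymbol{\theta},q)$, and the particular choice $\boldsymbol{\theta}=\boldsymbol{0}$, $q=0$ yields $(\bF,\bv)_{0,\O}=0$ for every $\bv$, i.e. $\bF=\boldsymbol{0}$, a contradiction. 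Hence $(\bT\bF,\bF)_{0,\O}>0$ whenever $\bF\neq\boldsymbol{0}$, and every eigenvalue of $\bT$ is strictly positive.

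Positive definiteness makes $\bT$ injective, so on the infinite-dimensional space $\H_0^1(\O)^n$ it cannot be of finite rank; consequently its eigenvalues form an infinite sequence $\{\mu_k\}_{k\in\mathbb{N}}$ of positive numbers which, by the accumulation property, decreases to $0$. Repeating each eigenvalue according to its multiplicity and adjoining the point $0$ produces the asserted decomposition $\sp(\bT)=\{\mu_k\}_{k\in\mathbb{N}}\cup\{0\}$. I expect the positivity identity to be the only genuinely computational ingredient; the sole point deserving care is that $\bT$ acts on $\H_0^1(\O)^n$ while its self-adjointness holds in the $\L^2(\O)^n$ inner product, a mismatch one resolves by performing the spectral decomposition for the natural extension of $\bT$ to $\L^2(\O)^n$, which is compact and self-adjoint there with range in $\H_0^1(\O)^n$, and noting that the eigenvalues and eigenspaces coincide in the two settings.
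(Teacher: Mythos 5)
Your proposal is correct, and it is worth noting that the paper offers no proof at all: the theorem is stated immediately after the observation that $\bT$ is compact, the intended argument being the implicit combination of compactness, the previously verified $\L^2(\O)$-self-adjointness, and the classical spectral theory of compact operators from \cite{MR1115235}. Your route follows that same outline but supplies two ingredients the paper never makes explicit. First, the positivity identity $(\bF,\bT\bF)_{0,\O}=\|\widetilde{\boldsymbol{\omega}}\|_{0,\O}^2+(2\mu_s+\lambda_s)^{-1}\|\widetilde p\|_{0,\O}^2$, obtained by testing \eqref{eq:source} with $(\boldsymbol{0},\widetilde{\boldsymbol{\omega}},\widetilde p)$ and $(\widetilde{\bu},\boldsymbol{0},0)$, is exactly what justifies the word \emph{positive} in the statement, and your bootstrap to strict positivity (vanishing forces $\widetilde{\boldsymbol{\omega}}=\boldsymbol{0}$, $\widetilde p=0$, hence $\bF=\boldsymbol{0}$ by density of $\H_0^1(\O)^n$ in $\L^2(\O)^n$) also gives injectivity of $\bT$, which is what guarantees the eigenvalue sequence is genuinely infinite rather than finite. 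Second, you correctly flag and repair the mismatch between the space $\H_0^1(\O)^n$ on which $\bT$ acts and the $\L^2(\O)$ inner product in which it is self-adjoint, by passing to the $\L^2$ extension; the paper quietly ignores this point here, even though it later introduces precisely that extension (the operator $\widehat{\bT}$) for the a posteriori analysis, and the identification of the nonzero spectra of the two operators is the clean way to transfer the spectral theorem back to $\bT$. In short, your proof is a complete version of the argument the paper only gestures at, and both of its additions are necessary for the theorem as stated.
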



\section{Finite element discretization}
\label{sec:fem}
The aim of this section is to develop a numerical method to approximate the eigenfrequencies of Problem \ref{problem1}. We state several results in order to provide estimates that allow to control the corresponding eigenfunctions.

\subsection{Mesh properties}
Let $\{\mathcal{T}_h\}_{h>0}$ be a shape-regular family of partitions of $\O$ that, depending on the domain, will consist in triangles (for two dimensions) or tetrahedrons (in three dimensions).  Let $h_T$ be the diameter of a triangle $T\in\CT_h$ and let us define $h:=\max\{h_T\,:\, T\in \CT_h\}$.

For $T\in\mathcal{T}_h$, let $\mathcal{E}(T)$ be the set of its edges/faces, and let $\mathcal{E}_h$ be the set of all
the faces/edges of the triangulation $\mathcal{T}_h$. For this set, we denote by $h_e$ the corresponding diameter of a face/edge $e$. 
With these definitions at hand, we write $\mathcal{E}_h=\mathcal{E}_h(\O)\cup\mathcal{E}_h(\partial\O)$, where
$$
	\mathcal{E}_h(\O):=\{e\in\mathcal{E}_h\,:\,e\subseteq\O\}\quad\text{and}\quad\mathcal{E}_h(\partial\O):=\{e\in\mathcal{E}_h\,:\, e\subseteq\partial\O\}.
$$
Additionally, we will denote by $\jump{\cdot}$ the corresponding edge/face jump for scalars and vector functions.

Let us introduce the finite element spaces which we will operate. Given $k\geq 1$, the finite element spaces to develop the numerical scheme are the following 
\begin{align*}
	\mathbf{H}_h&:=\{\bv_h\in\H_0^1(\Omega)\,:\, \bv_h|_T\in\mathbb{P}_k(T)^n\quad\forall T\in\mathcal{T}_h\},\\
	\mathbf{Z}_h&:=\{\boldsymbol{\theta}_h\in 	\L^2(\Omega)^{n(n-1)/2}\,:\, \boldsymbol{\theta}_h|_T\in\mathbb{P}_{k-1}(T)^{n(n-1)/2}\quad\forall T\in\mathcal{T}_h\},\\
	Q_h&:=\{q_h\in \L^2(\Omega)\,:\, q_h|_T\in\mathbb{P}_{k-1}(T)\quad\forall T\in\mathcal{T}_h\},
\end{align*} 
in which the rotation, displacement, and pressure are approximated, respectively.
\subsection{Discrete problem}

We start this section by stating the corresponding finite element discretization of Problem \ref{problem1}, which reads as follows.
\begin{problem}
\label{problem1_discrete}
Find $\kappa_h\in\mathbb{R}$ and $\boldsymbol{0}\neq (\bu_h,\boldsymbol{\omega}_h,p_h)\in \mathbf{H}_h\times\mathbf{Z}_h\times Q_h$, such that  
$$
\mathcal{A}((\bu_h,\boldsymbol{\omega}_h,p_h),(\bv_h,\boldsymbol{\theta}_h,q_h))=-\kappa_h(\bu_h,\bv_h)_{0,\Omega},
$$
for all $(\bv_h,\boldsymbol{\theta}_h,q_h)\in \mathbf{H}_h\times\mathbf{Z}_h\times Q_h$.
\end{problem}

At this point we must emphasize what was commented in \cite[Section 2.2]{MR4461583}, where we have that the fact that the non-uniqueness of the pressure in the incompressible limit requires that Lagrange multipliers be introduced to fix the zero-mean condition, or, as was done in \cite{hughes1987new}, to add a stabilization term in the case that the pressure is approximated using discontinuous elements (see Section \ref{sec:numerics} below). 

By means of the arguments used in Theorem, we conclude that Problem \ref{problem1_discrete} is well-posed, and therefore we have the respective discrete estimates for the discrete source problem.

Let $\bT_h$ be the discrete  linear operator defined by 
$$
 \bT_h:\H^{1+s}(\Omega)^n\rightarrow\mathbf{H}_h,\,\,\,\bF\mapsto\bT_h\bF:=\widetilde{\bu}_h,
$$
where the triplet $(\widetilde{\bu}_h,\widetilde{\boldsymbol{\omega}}_h,\widetilde{p}_h)$  is the solution of the following discrete source problem
\begin{equation}
\label{eq:source_h} 
	\mathcal{A}((\widetilde{\bu}_h,\widetilde{\boldsymbol{\omega}}_h,\widetilde{p}_h),(\bv_h,\boldsymbol{\theta}_h,q_h))=-(\bF,\bv_h)_{0,\Omega},
\end{equation}
for all $(\bv_h,\boldsymbol{\theta}_h,q_h)\in \mathbf{H}_h\times\mathbf{Z}_h\times Q_h$. Observe that $\bT_h$ is selfadjoint and  as in the continuous case,  $(\eta_h,\bu_h)\in \mathbb{R}\times\mathbf{H}_h$, with $\eta_h\neq 0$, is an eigenpair of $\bT_h$ if only if  there exists $(\boldsymbol{\omega}_h,p_h)\in\mathbf{Z}_h\times Q_h$ such that,  $(\kappa_h,\boldsymbol{u}_h,\boldsymbol{\omega}_h,p_h)$ solves Problem \ref{problem1_discrete} for $\eta_h:=1/\kappa_h$. 

The following result is a direct consequence of the well-posedness of Problems \ref{problem1}--\ref{problem1_discrete} and their corresponding source problems.
\begin{corollary}[Approximation between $\bT$ and $\bT_h$]
	\label{cor:app_TT1}
Let $\boldsymbol{f}\in \L^2(\O)^n$. Under the assumptions of Theorem \ref{th:regularity}, there exists $C_{\mu_s}>0$, independent of $h$, such that
$$
		\|(\bT-\bT_h)\boldsymbol{f}\|_{1,\O}\leq C_{\mu_s}h^{s}\|\boldsymbol{f}\|_{0,\O}\leq C_{\mu_s}h^{s}\|\boldsymbol{f}\|_{1,\O}.
$$
\end{corollary}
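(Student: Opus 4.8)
The plan is to read $(\bT-\bT_h)\bF=\widetilde{\bu}-\widetilde{\bu}_h$ as the displacement component of the error between the exact and discrete source problems \eqref{eq:source} and \eqref{eq:source_h}, and to control it through a quasi-optimality (C\'ea-type) argument combined with interpolation estimates and the regularity of Theorem \ref{th:regularity}. First I would note that the triple norm already dominates the full $\H^1$ norm of the displacement: for every $\bv\in\H_0^1(\O)^n$ the identity $|\bv|_{1,\O}^2=\|\div\bv\|_{0,\O}^2+\|\curl\bv\|_{0,\O}^2$ (the cross terms vanish by the homogeneous boundary condition) together with the Poincar\'e inequality gives $\|\bv\|_{1,\O}^2\leq C\mu_s^{-1}\vertiii{(\bv,\boldsymbol{0},0)}^2$. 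Hence it suffices to bound $\vertiii{(\widetilde{\bu}-\widetilde{\bu}_h,\widetilde{\boldsymbol{\omega}}-\widetilde{\boldsymbol{\omega}}_h,\widetilde{p}-\widetilde{p}_h)}$.

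Next I would establish the quasi-optimality estimate. Since the discrete spaces are conforming and both source problems are driven by the same functional $-(\bF,\cdot)_{0,\O}$, subtracting \eqref{eq:source_h} from \eqref{eq:source} tested against discrete functions yields the Galerkin orthogonality $\mathcal{A}((\widetilde{\bu}-\widetilde{\bu}_h,\widetilde{\boldsymbol{\omega}}-\widetilde{\boldsymbol{\omega}}_h,\widetilde{p}-\widetilde{p}_h),(\bv_h,\boldsymbol{\theta}_h,q_h))=0$ for all $(\bv_h,\boldsymbol{\theta}_h,q_h)\in\mathbf{H}_h\times\mathbf{Z}_h\times Q_h$. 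Combining this with the boundedness of $\mathcal{A}$, the continuous global inf-sup of Lemma \ref{lmm:elliptic}, and its discrete counterpart (which underlies the well-posedness of Problem \ref{problem1_discrete}), the C\'ea/Babu\v{s}ka lemma produces a constant $C$, independent of $h$, with
$$
\vertiii{(\widetilde{\bu}-\widetilde{\bu}_h,\widetilde{\boldsymbol{\omega}}-\widetilde{\boldsymbol{\omega}}_h,\widetilde{p}-\widetilde{p}_h)}\leq C\inf_{(\bv_h,\boldsymbol{\theta}_h,q_h)}\vertiii{(\widetilde{\bu}-\bv_h,\widetilde{\boldsymbol{\omega}}-\boldsymbol{\theta}_h,\widetilde{p}-q_h)}.
$$

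To bound the infimum I would choose $\bv_h$ to be the Lagrange (or Scott--Zhang) interpolant of $\widetilde{\bu}$ in $\mathbf{H}_h$ and $\boldsymbol{\theta}_h,q_h$ to be the $\L^2$ projections of $\widetilde{\boldsymbol{\omega}},\widetilde{p}$ onto $\mathbf{Z}_h,Q_h$. Standard interpolation and projection estimates then give $|\widetilde{\bu}-\bv_h|_{1,\O}\leq Ch^s\|\widetilde{\bu}\|_{1+s,\O}$, $\|\widetilde{\boldsymbol{\omega}}-\boldsymbol{\theta}_h\|_{0,\O}\leq Ch^s\|\widetilde{\boldsymbol{\omega}}\|_{s,\O}$, and $\|\widetilde{p}-q_h\|_{0,\O}\leq Ch^s\|\widetilde{p}\|_{s,\O}$, all valid since $s\in(0,1]$ and $k\geq1$. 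Inserting these into the weighted triple norm and then invoking the regularity bound $\|\widetilde{\bu}\|_{1+s,\O}+\|\widetilde{\boldsymbol{\omega}}\|_{s,\O}+\|\widetilde{p}\|_{s,\O}\leq\widehat{C}\|\bF\|_{0,\O}$ of Theorem \ref{th:regularity} yields $\|(\bT-\bT_h)\bF\|_{1,\O}\leq C_{\mu_s}h^s\|\bF\|_{0,\O}$, with $C_{\mu_s}$ collecting the Poincar\'e constant and the $\mu_s$- and $\lambda_s$-dependent weights of the norm. The second inequality in the statement is immediate from $\|\bF\|_{0,\O}\leq\|\bF\|_{1,\O}$.

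The main obstacle is the quasi-optimality step, and specifically securing a discrete global inf-sup constant for $\mathcal{A}$ that is uniform in $h$: the weighted norm mixes $\H^1$-type control of the displacement with $\L^2$ control of the rotation and pressure, so one must verify that the argument behind Lemma \ref{lmm:elliptic} transfers to the pair $\mathbf{H}_h\times\mathbf{Z}_h\times Q_h$ (equivalently, that a uniformly bounded Fortin operator exists for the $\mathbb{P}_k$--$\mathbb{P}_{k-1}$ choice). A secondary point requiring care is the dependence of $\widehat{C}$ and of the triple-norm weights on $\lambda_s$ as the incompressible limit is approached, which is precisely what forces the estimate to be stated with a $\mu_s$-dependent constant $C_{\mu_s}$ rather than a universal one.
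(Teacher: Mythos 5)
Your proposal is correct and follows essentially the same route as the paper: bound the $\H^1$ error of the displacement by $\mu_s^{-1/2}$ times the triple norm, invoke a C\'ea-type best approximation property for $\mathcal{A}$, and conclude via interpolation estimates together with the regularity of Theorem~\ref{th:regularity}. The only difference is that the paper simply quotes the quasi-optimality and approximation estimates from \cite[Theorems 3.1--3.2]{MR3872756} (the load-problem analysis), whereas you sketch their derivations; the uniform discrete inf-sup condition you flag as the main obstacle is exactly what that citation supplies.
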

\begin{proof}
Let $\widetilde{\bu}:=\bT\bF$ and $\widetilde{\bu}_h:=\bT_h\bF$. Hence, we have directly
$$
\|(\bT-\bT_h)\bF\|_{1,\O}=\|\widetilde{\bu}-\widetilde{\bu}_h\|_{1,\O}\leq \dfrac{1}{\sqrt{\mu_s}}\vertiii{(\widetilde{\bu}-\widetilde{\bu}_h,\widetilde{\boldsymbol{\omega}}-\widetilde{\boldsymbol{\omega}}_h,\widetilde{p}-\widetilde{p}_h)}.
$$
From \cite[Theorem 3.1]{MR3872756}, the following best approximation property holds
$$
\vertiii{(\widetilde{\boldsymbol{\omega}}-\widetilde{\boldsymbol{\omega}}_h,\widetilde{p}-\widetilde{p}_h,\widetilde{\bu}-\widetilde{\bu}_h)}\leq C\inf_{(\bv_h,\boldsymbol{\theta}_h,q_h)\in \mathbf{H}_h\times\mathbf{Z}_h\times Q_h}\vertiii{(\widetilde{\boldsymbol{\omega}}-\boldsymbol{\theta}_h,\widetilde{p}-q_h,\widetilde{\bu}-\bv_h)},
$$
where the constant $C$ is positive and independent of $h$. The result is concluded from  \cite[Theorem 3.2]{MR3872756} and the additional regularity provided by Theorem \ref{th:regularity}.
%
%
%
\end{proof}

\begin{remark}
We note from the above corollary that the following result is also true
\begin{equation}\label{eq:_aux_estimate}
\vertiii{(\widetilde{\bu}-\widetilde{\bu}_h,\widetilde{\boldsymbol{\omega}}-\widetilde{\boldsymbol{\omega}}_h,\widetilde{p}-\widetilde{p}_h)}\leq C_{\mu_s}h^{s}\|\bF\|_{1,\O}.
\end{equation}
\end{remark}

As a consequence of the previous results, it is immediate that the proposed  numerical method is spurious free, as is stated in the following result (see \cite{MR0203473} for instance).
\begin{theorem}
	\label{thm:spurious_free}
	Let $V\subset\mathbb{C}$ be an open set containing $\sp(\bT)$. Then, there exists $h_0>0$ such that $\sp(\bT_h)\subset V$ for all $h<h_0$.
\end{theorem}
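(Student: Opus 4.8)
The plan is to obtain the result as a standard consequence of the operator-norm convergence $\bT_h\to\bT$, via a resolvent perturbation argument of the type underlying the spectral approximation theory of \cite{MR0203473}. The only quantitative ingredient is already in hand: reading both operators on the common space $X:=\H_0^1(\O)^n$ and using $\|\bF\|_{0,\O}\le C\|\bF\|_{1,\O}$, Corollary \ref{cor:app_TT1} gives
\begin{equation*}
\|\bT-\bT_h\|_{\mathcal{L}(X)}=\sup_{\bF\neq\boldsymbol{0}}\frac{\|(\bT-\bT_h)\bF\|_{1,\O}}{\|\bF\|_{1,\O}}\leq C_{\mu_s}h^{s},
\end{equation*}
which tends to $0$ as $h\to0$. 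In addition, the $h$-uniform well-posedness of the discrete source problem \eqref{eq:source_h} yields a uniform bound $\|\bT_h\|_{\mathcal{L}(X)}\le C$, so that every discrete spectrum is confined to the fixed disk $D:=\{z\in\mathbb{C}:|z|\le C\}$.

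With these two facts, I would carry out the resolvent estimate. Fix $z\in\mathbb{C}\setminus V$; since $\sp(\bT)\subset V$ we have $z\in\rho(\bT)$, and I factor
\begin{equation*}
zI-\bT_h=(zI-\bT)\bigl[I+(zI-\bT)^{-1}(\bT-\bT_h)\bigr].
\end{equation*}
Because $\sp(\bT_h)\subset D$, it suffices to rule out spectral points on the compact set $\mathcal{K}:=(\mathbb{C}\setminus V)\cap D$, which is disjoint from $\sp(\bT)$; there the resolvent map $z\mapsto(zI-\bT)^{-1}$ is continuous, so $M:=\sup_{z\in\mathcal{K}}\|(zI-\bT)^{-1}\|_{\mathcal{L}(X)}<\infty$. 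Taking $h_0$ small enough that $C_{\mu_s}h^{s}M<1$ for $h<h_0$, the bracketed factor is invertible by a Neumann series, hence $zI-\bT_h$ is invertible and $z\in\rho(\bT_h)$ for every $z\in\mathcal{K}$.

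Combining the two steps, for $h<h_0$ we get $\sp(\bT_h)\cap\mathcal{K}=\emptyset$, and since $\sp(\bT_h)\subset D$ this is equivalent to $\sp(\bT_h)\subset V$, as claimed. I expect the step requiring the most care to be the bookkeeping that compactifies the spectral parameter: the complement $\mathbb{C}\setminus V$ is unbounded, and it is only the $h$-uniform bound $\|\bT_h\|_{\mathcal{L}(X)}\le C$ that allows one to replace it by the compact set $\mathcal{K}$ on which the resolvent of $\bT$ is uniformly bounded. A secondary point worth checking is the functional setting: since the range of $\bT_h$ lies in $\mathbf{H}_h$ and $0\in\sp(\bT)\subset V$ automatically, all nonzero discrete eigenvalues may be analyzed on $\mathbf{H}_h$, which legitimizes interpreting Corollary \ref{cor:app_TT1} as a genuine operator-norm bound on $X$.
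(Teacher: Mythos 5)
Your argument is correct and follows essentially the same route as the paper: the paper obtains this theorem directly from the norm convergence in Corollary \ref{cor:app_TT1} combined with the classical spectral perturbation theory it cites in \cite{MR0203473}, which is exactly the resolvent factorization and Neumann-series mechanism you spell out. The only difference is that you make the standard argument explicit (including the compactification of the spectral parameter via the uniform bound on $\sp(\bT_h)$ and the uniform resolvent bound on $(\mathbb{C}\setminus V)\cap D$), whereas the paper delegates these details to the cited reference.
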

Let us  recall the definition of the resolvent operator of $\bT$ and $\bT_h$ respectively:
\begin{gather*}
	R_z(\bT):=(z\bI-\bT)^{-1}\,:\, \H_0^1(\Omega)^n \to \H_0^1(\Omega)^n\,, \quad z\in\mathbb{C}\setminus \sp(\bT), \\
	R_z(\bT_{h}):=(z\bI-\bT_h)^{-1}\,:\, \mathbf{H}_h \to \mathbf{H}_h\,, \quad z\in\mathbb{C}\setminus\sp(\bT_h) .
\end{gather*}
We also  invoke the following result for the resolvent of $\bT$.
\begin{proposition}\label{prop:bounded_resolv}
If $z\notin\sp(\bT)$, then there exists a positive constant $C$, independent of $\lambda$ and $z$ such that
$$
\|(z\bI-\bT)\bu\|_{1,\O}\geq C\dist(z,\sp(\bT))\|\bu \|_{1,\O},
$$
where $\dist(z,\sp(\bT ))$ represents the distance between $z$ and the spectrum of $\bT$ in the complex plane, which in principle depends on $\lambda_s$. 
\end{proposition}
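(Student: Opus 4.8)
The plan is to exploit the fact, already recorded in the excerpt, that $\bT$ is selfadjoint and compact with respect to the $\L^2(\O)$ inner product. This makes the textbook resolvent estimate available \emph{in the $\L^2$ norm}, and the task then reduces to transferring that estimate to the stronger $\H^1$ norm by means of the smoothing property of $\bT$ furnished by Theorem \ref{th:regularity}. First I would record the $\L^2$ bound: since $\bT$ is selfadjoint and compact on $\L^2(\O)^n$, its spectrum is real and the spectral theorem (see \cite{MR1115235}) gives, for every $z\notin\sp(\bT)$,
$$
\|(z\bI-\bT)\bu\|_{0,\O}\ge \dist(z,\sp(\bT))\,\|\bu\|_{0,\O}\qquad\forall\,\bu\in\L^2(\O)^n,
$$
where $\dist$ is taken in the complex plane and, the spectrum being real, $\dist(z,\sp(\bT))\ge|\operatorname{Im}z|$.

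The second step is the transfer to $\H^1$. Writing $\bw:=(z\bI-\bT)\bu$ and using that $z\ne 0$ (because $0\in\sp(\bT)$ while $z\notin\sp(\bT)$), the identity $z\bu=\bw+\bT\bu$ yields $\bu=z^{-1}(\bw+\bT\bu)$, whence
$$
\|\bu\|_{1,\O}\le \frac{1}{|z|}\,\|\bw\|_{1,\O}+\frac{1}{|z|}\,\|\bT\bu\|_{1,\O}.
$$
The second term is controlled by the smoothing of the solution operator: Theorem \ref{th:regularity} shows that $\bT$ maps $\L^2(\O)^n$ continuously into $\H^{1+s}(\O)^n\hookrightarrow\H^1(\O)^n$, so $\|\bT\bu\|_{1,\O}\le C\|\bu\|_{0,\O}$. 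Inserting the $\L^2$ estimate of the first step, together with the elementary facts $\|\bw\|_{0,\O}\le\|\bw\|_{1,\O}$ and $\dist(z,\sp(\bT))\le|z|$ (again because $0\in\sp(\bT)$), produces a bound of the form $\|\bu\|_{1,\O}\le C\,\dist(z,\sp(\bT))^{-1}\|\bw\|_{1,\O}$, which is the claim after rearranging.

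The main obstacle is precisely this change of norm: $\bT$ is selfadjoint only for the $\L^2$ pairing, while the target estimate lives in $\H^1$, and the eigenfunctions are \emph{not} orthogonal in $\H^1$, so the spectral theorem cannot be applied directly there. The smoothing bootstrap circumvents this, but one must keep the constant uniform in $z$: for large $|z|$ the estimate follows at once from $\|(z\bI-\bT)\bu\|_{1,\O}\ge(|z|-\|\bT\|)\|\bu\|_{1,\O}$, whereas for bounded $|z|$ bounded away from $0$ the factor $|z|^{-1}$ is harmless, the regime $z\to 0$ being immaterial for the subsequent spectral approximation, which concerns the nonzero eigenvalues $\mu_k$. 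Care must likewise be taken with the dependence on the Lam\'e parameter: the constant is tracked through the $\lambda_s$-sensitive constant $\widehat C$ of Theorem \ref{th:regularity} appearing in $\|\bT\bu\|_{1,\O}\le C\|\bu\|_{0,\O}$.
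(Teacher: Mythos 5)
Your two-step plan (spectral theorem in $\L^2$, then transfer to $\H^1$ by smoothing) does not prove the proposition as stated, because the transfer costs a factor $1/|z|$ that you cannot remove near $z=0$. Concretely, from $\bu=z^{-1}(\bw+\bT\bu)$, the regularity bound $\|\bT\bu\|_{1,\O}\le C\|\bu\|_{0,\O}$ and the $\L^2$ resolvent estimate $\|\bu\|_{0,\O}\le\dist(z,\sp(\bT))^{-1}\|\bw\|_{0,\O}$, what you actually obtain is
$$
\|\bu\|_{1,\O}\le\frac{1}{|z|}\left(1+\frac{C}{\dist(z,\sp(\bT))}\right)\|\bw\|_{1,\O},
$$
i.e.\ a constant of order $\left(|z|\,\dist(z,\sp(\bT))\right)^{-1}$ rather than $\dist(z,\sp(\bT))^{-1}$. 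Since $\bT$ is compact, $0\in\sp(\bT)$ and the eigenvalues $\mu_k$ accumulate at $0$, so the resolvent set contains points $z$ arbitrarily close to $0$ (between consecutive small eigenvalues, or non-real $z$ of small modulus); for those $z$ your constant degenerates like $1/|z|$. Declaring the regime $z\to0$ ``immaterial'' changes the statement: the proposition asserts a single constant valid for \emph{every} $z\notin\sp(\bT)$. What your argument proves is the weaker claim that for each $z_0>0$ the bound holds on $\{|z|\ge z_0\}$ with a $z_0$-dependent constant — sufficient for the later uses (Lemma \ref{thm:bounded_resolvent} and Lemma \ref{lemma:spectral_projectors}, where $z$ runs over closed subsets of the resolvent set, hence bounded away from $0$), but not the proposition itself.

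For comparison, the paper offers no argument at all: its proof is the citation \cite[Proposition 2.4]{MR3376135}. The standard way to get the $z$-uniform bound — and the way to repair your proof — is to replace $\L^2$-selfadjointness by selfadjointness with respect to an inner product \emph{equivalent to the $\H^1$ norm}. Since $\bT\bF$ solves the elasticity source problem, one has $a_E(\bT\bF,\bv)=(\bF,\bv)_{0,\O}$ for all $\bv\in\H_0^1(\O)^n$, where $a_E(\bu,\bv):=2\mu_s(\beps(\bu),\beps(\bv))_{0,\O}+\lambda_s(\div\bu,\div\bv)_{0,\O}$; the same manipulation the paper uses for $\L^2$-selfadjointness then gives $a_E(\bT\bF,\bg)=(\bF,\bg)_{0,\O}=a_E(\bF,\bT\bg)$, and Korn's inequality shows that $a_E$ induces a norm on $\H_0^1(\O)^n$ equivalent to $\|\cdot\|_{1,\O}$. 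Applying the spectral theorem in the Hilbert space $\left(\H_0^1(\O)^n,a_E\right)$ yields $\|(z\bI-\bT)\bu\|_{a_E}\ge\dist(z,\sp(\bT))\|\bu\|_{a_E}$ with constant exactly $1$ for all $z\notin\sp(\bT)$, and the norm equivalence transfers this to the $\H^1$ norm uniformly in $z$. A separate caveat applies to both routes: the claimed independence of $C$ from the Lam\'e constant is delicate, since your constant enters through $\widehat{C}$ of Theorem \ref{th:regularity} and the energy-norm constant enters through the $\lambda_s$-dependent equivalence of $a_E$ with $\|\cdot\|_{1,\O}$; handling that dependence is precisely the content of the cited reference.
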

\begin{proof}
See \cite[Proposition 2.4]{MR3376135}.
\end{proof}

Now we prove the analogous result presented above, but for the resolvent of the discrete solution operator:
  \begin{lemma}
 \label{thm:bounded_resolvent}
 Let $F\subset\rho(\bT)$ be closed. Then, there exist
 positive constants $C$ and $h_0$, independent of $h$, such that for $h<h_0$
$$
 \displaystyle\|(z\bI-\bT_h)^{-1}\boldsymbol{f}\|_{1,\O}\leq C_{\mu_s}\|\boldsymbol{f}\|_{1,\O}\qquad\forall z\in F.
$$
 \end{lemma}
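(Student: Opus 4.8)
The plan is to derive the discrete resolvent bound from the continuous one in Proposition~\ref{prop:bounded_resolv} together with the norm convergence $\bT_h\to\bT$ supplied by Corollary~\ref{cor:app_TT1}, via a standard perturbation argument arranged so that no operator inversion on the discrete space is required. First I would record a uniform estimate for the continuous resolvent on $F$. Since the eigenvalue sequence $\{\mu_k\}_{k\in\mathbb{N}}$ has $0$ as its only accumulation point and $0\in\sp(\bT)$, the spectrum $\sp(\bT)=\{\mu_k\}_{k\in\mathbb{N}}\cup\{0\}$ is closed and bounded, hence compact. As $F\subset\rho(\bT)=\mathbb{C}\setminus\sp(\bT)$ is closed and disjoint from the compact set $\sp(\bT)$, we have $d:=\dist(F,\sp(\bT))>0$, so $\dist(z,\sp(\bT))\geq d$ for every $z\in F$. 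Applying Proposition~\ref{prop:bounded_resolv} with $\boldsymbol{u}=R_z(\bT)\boldsymbol{g}$ then yields the uniform bound
$$
\|R_z(\bT)\boldsymbol{g}\|_{1,\O}\leq\frac{1}{Cd}\,\|\boldsymbol{g}\|_{1,\O}\qquad\forall z\in F,\ \forall\,\boldsymbol{g}\in\H_0^1(\O)^n .
$$

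Next I would fix $z\in F$ and $\boldsymbol{f}\in\mathbf{H}_h$ and set $\boldsymbol{w}_h:=(z\bI-\bT_h)^{-1}\boldsymbol{f}\in\mathbf{H}_h$. Because $\mathbf{H}_h\subset\H_0^1(\O)^n$, both $\bT\boldsymbol{w}_h$ and $\bT_h\boldsymbol{w}_h$ are well defined, and adding and subtracting $\bT\boldsymbol{w}_h$ in the identity $(z\bI-\bT_h)\boldsymbol{w}_h=\boldsymbol{f}$ gives
$$
(z\bI-\bT)\boldsymbol{w}_h=\boldsymbol{f}+(\bT-\bT_h)\boldsymbol{w}_h\qquad\text{in }\H_0^1(\O)^n .
$$
Applying $R_z(\bT)$ produces $\boldsymbol{w}_h=R_z(\bT)\boldsymbol{f}+R_z(\bT)(\bT-\bT_h)\boldsymbol{w}_h$. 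Taking the $\H^1$ norm, using the uniform bound above and Corollary~\ref{cor:app_TT1} with input $\boldsymbol{w}_h$ (legitimate since $\boldsymbol{w}_h\in\mathbf{H}_h\subset\L^2(\O)^n$), I obtain
$$
\|\boldsymbol{w}_h\|_{1,\O}\leq\frac{1}{Cd}\|\boldsymbol{f}\|_{1,\O}+\frac{C_{\mu_s}h^{s}}{Cd}\|\boldsymbol{w}_h\|_{1,\O}.
$$

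Finally I would choose $h_0$ so small that $C_{\mu_s}h_0^{s}/(Cd)\leq 1/2$; then for all $h<h_0$ the last term is absorbed into the left-hand side, giving $\|\boldsymbol{w}_h\|_{1,\O}\leq(2/(Cd))\,\|\boldsymbol{f}\|_{1,\O}$, which is exactly the claimed estimate with a constant independent of $h$ and of $z\in F$. Note the route through a direct norm estimate is preferable to inverting $\bI+R_z(\bT)(\bT-\bT_h)$ by a Neumann series, because the latter maps $\mathbf{H}_h$ into $\H_0^1(\O)^n$ rather than back into $\mathbf{H}_h$.

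The main obstacle I anticipate is a bookkeeping one rather than a conceptual one: carefully justifying the manipulations across the mismatched domains of $\bT$, which acts on $\H_0^1(\O)^n$, and $\bT_h$, which maps into $\mathbf{H}_h$. The decisive point is that $\boldsymbol{w}_h\in\mathbf{H}_h\subset\H_0^1(\O)^n$, so every term in the identity for $\boldsymbol{w}_h$ is a genuine element of $\H_0^1(\O)^n$ and Corollary~\ref{cor:app_TT1} applies verbatim to the discrete input. One must also verify that the constant $C$ in Proposition~\ref{prop:bounded_resolv} and the constant $C_{\mu_s}$ in Corollary~\ref{cor:app_TT1} are genuinely independent of $h$ and $z$, so that the threshold $h_0$ can be selected uniformly over all $z\in F$.
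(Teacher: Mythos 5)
Your argument is correct, and in fact the paper offers nothing to compare it against: despite the sentence ``Now we prove the analogous result\ldots'', the lemma is stated in the manuscript with no proof at all, so your perturbation argument supplies exactly the reasoning the authors leave implicit, using precisely the two ingredients the paper sets up immediately beforehand (Proposition~\ref{prop:bounded_resolv} for the uniform continuous resolvent bound on $F$, and Corollary~\ref{cor:app_TT1} for the $O(h^s)$ operator convergence), followed by the standard absorption step. All the individual steps check out: $\sp(\bT)$ is compact (bounded, with $0$ its only accumulation point and $0\in\sp(\bT)$), so a closed $F\subset\rho(\bT)$ has $d:=\dist(F,\sp(\bT))>0$; the identity $\boldsymbol{w}_h=R_z(\bT)\boldsymbol{f}+R_z(\bT)(\bT-\bT_h)\boldsymbol{w}_h$ is legitimate because $\mathbf{H}_h\subset\H_0^1(\O)^n$; and the constants involved are independent of $z$ and $h$, so $h_0$ is uniform over $F$. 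The one point you should tighten is that you \emph{define} $\boldsymbol{w}_h:=(z\bI-\bT_h)^{-1}\boldsymbol{f}$, which presupposes that $z\bI-\bT_h$ is invertible on $\mathbf{H}_h$ for $z\in F$ and $h$ small --- but that existence claim is part of what the lemma asserts. This is easily repaired in either of two ways: (i) invoke Theorem~\ref{thm:spurious_free} with $V$ the open $d/2$-neighborhood of $\sp(\bT)$, so that $\sp(\bT_h)\cap F=\emptyset$ for $h<h_0$; or (ii) run your estimate in the form $\|\boldsymbol{w}_h\|_{1,\O}\leq \tfrac{2}{Cd}\|(z\bI-\bT_h)\boldsymbol{w}_h\|_{1,\O}$ for arbitrary $\boldsymbol{w}_h\in\mathbf{H}_h$, which gives injectivity of $z\bI-\bT_h$ on the finite-dimensional space $\mathbf{H}_h$ and hence bijectivity, after which the stated bound follows. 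With that addition your proof is complete and self-contained.
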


Our next task is to derive error estimates for the eigenvalues and eigenfunctions. 
Let $\boldsymbol{E}:\mathbf{Q}\rightarrow\mathbf{Q}$ be the spectral projector of $\bT$ corresponding to the isolated 
eigenvalue $\xi$, namely
$$
\displaystyle \boldsymbol{E}:=\frac{1}{2\pi i}\int_{\gamma} R_{z}(\bT)dz.
$$
On the other, we define $\boldsymbol{E}_h:\mathbf{Q}\rightarrow\mathbf{Q}$ as the spectral projector of $\bT_h$ corresponding to the isolated 
eigenvalue $\xi_h$, namely
$$
\displaystyle \boldsymbol{E}_h:=\frac{1}{2\pi i}\int_{\gamma} R_{z}(\boldsymbol{T}_h)dz.
$$
Let $\kappa$ be an isolated eigenvalue of $\bT$. We define the following distance
$$
\texttt{d}_\kappa:=\frac{1}{2}\dist\left(\kappa,\sp(\bT)\setminus\{\kappa\}\right).
$$
With this distance at hand, we define the disk centered in $\kappa$ and boundary $\gamma$ as follows
$
D_\kappa:=\{z\in\mathbb{C}:\,\,|z-\kappa|\leq \texttt{d}_\kappa\}.
$
We observe that the disk defined above satisfies $D_\kappa\cap\sp(\bT)=\{\kappa\}$.

We owe the following result to \cite[Lemma 5.3]{MR3962898}.

\begin{lemma}
\label{lemma:spectral_projectors}
Let $\boldsymbol{f}\in \bQ$. There exist constants $C_{\mu_s}>0$ and $h_{0}>0$ such that, for all $h<h_{0}$,
$$
	\|  (\boldsymbol{E}-\boldsymbol{E}_{h})\boldsymbol{f} \|_{1,\O}\leq \dfrac{C_{\mu_s}}{\texttt{d}_{\kappa}}\|(\bT-\bT_h)\boldsymbol{f}\|_{1,\Omega}\leq\dfrac{C_{\mu_s}}{\texttt{d}_{\kappa}}\,h^{ \min\{ s,k \} } \| \boldsymbol{f} \|_{1,\O}.
$$
\end{lemma}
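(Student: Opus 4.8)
The plan is to represent both spectral projectors by their Dunford--Taylor integrals over the common contour $\gamma=\partial D_\kappa$ and to control their difference through the resolvent identity. As a preliminary step I would fix the geometry of $\gamma$: by the definition of $\texttt{d}_\kappa$ the circle $\gamma$ satisfies $\dist(z,\sp(\bT))=\texttt{d}_\kappa$ for every $z\in\gamma$, because the nearest point of $\sp(\bT)$ to any such $z$ is $\kappa$ itself, while every other eigenvalue lies at distance at least $2\texttt{d}_\kappa$ from $\kappa$. In particular $\gamma\subset\rho(\bT)$, and combining the spurious-free property (Theorem \ref{thm:spurious_free}) with Lemma \ref{thm:bounded_resolvent} one obtains $h_0>0$ such that $\gamma\subset\rho(\bT_h)$ for all $h<h_0$. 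Hence $\boldsymbol{E}$ and $\boldsymbol{E}_h$ may be represented over the same curve $\gamma$, and $(\boldsymbol{E}-\boldsymbol{E}_h)\boldsymbol{f}$ is a single contour integral of the resolvent difference.

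Next I would insert the resolvent identity
$$
R_{z}(\bT)-R_{z}(\bT_h)=R_{z}(\bT_h)\,(\bT-\bT_h)\,R_{z}(\bT),
$$
so that
$$
(\boldsymbol{E}-\boldsymbol{E}_h)\boldsymbol{f}=\frac{1}{2\pi i}\int_{\gamma}R_{z}(\bT_h)\,(\bT-\bT_h)\,R_{z}(\bT)\boldsymbol{f}\,dz .
$$
Taking the $\H^1(\O)$ norm and estimating the integrand pointwise on $\gamma$ rests on three ingredients: the uniform bound $\|R_{z}(\bT_h)\boldsymbol{g}\|_{1,\O}\le C_{\mu_s}\|\boldsymbol{g}\|_{1,\O}$ supplied by Lemma \ref{thm:bounded_resolvent} (valid uniformly in $z\in\gamma$ and in $h<h_0$); the bound $\|R_{z}(\bT)\boldsymbol{g}\|_{1,\O}\le C\,\dist(z,\sp(\bT))^{-1}\|\boldsymbol{g}\|_{1,\O}=C\,\texttt{d}_\kappa^{-1}\|\boldsymbol{g}\|_{1,\O}$ on $\gamma$, which follows from Proposition \ref{prop:bounded_resolv}; and the elementary identity that the length of $\gamma$ is $2\pi\texttt{d}_\kappa$. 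Assembling these under the integral produces the first asserted inequality $\|(\boldsymbol{E}-\boldsymbol{E}_h)\boldsymbol{f}\|_{1,\O}\le C_{\mu_s}\,\texttt{d}_\kappa^{-1}\|(\bT-\bT_h)\boldsymbol{f}\|_{1,\O}$, the explicit dependence on $\texttt{d}_\kappa$ being inherited from the size of the continuous resolvent on $\gamma$.

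Finally, the second inequality is immediate: Corollary \ref{cor:app_TT1} (together with the remark following it) gives $\|(\bT-\bT_h)\boldsymbol{f}\|_{1,\O}\le C_{\mu_s}h^{s}\|\boldsymbol{f}\|_{1,\O}$, and since $s\in(0,1]$ while $k\ge 1$ one has $s=\min\{s,k\}$, so the rate $h^{\min\{s,k\}}$ follows. I expect the main obstacle to be the uniform control of the discrete resolvent $R_{z}(\bT_h)$ along the entire contour and uniformly in $h$: this is precisely where the spurious-free property is essential, so that $\gamma$ stays bounded away from $\sp(\bT_h)$ and Lemma \ref{thm:bounded_resolvent} applies with a constant independent of $z$ and of $h$. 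A secondary but delicate point is the bookkeeping of the $\texttt{d}_\kappa$ dependence, since both the length of $\gamma$ and the magnitude of $R_{z}(\bT)$ scale with $\texttt{d}_\kappa$ and must be combined carefully to yield exactly the stated factor.
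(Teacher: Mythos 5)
Your overall strategy is the right one, and it is worth noting that the paper itself offers no proof of this lemma at all: it simply cites \cite[Lemma 5.3]{MR3962898}. The Dunford--Taylor representation of both projectors over the common contour $\gamma=\partial D_\kappa$, the observation that $\dist(z,\sp(\bT))=\texttt{d}_\kappa$ for $z\in\gamma$ with contour length $2\pi\texttt{d}_\kappa$, the use of Theorem \ref{thm:spurious_free} to guarantee $\gamma\subset\rho(\bT_h)$ for $h<h_0$, the uniform discrete resolvent bound of Lemma \ref{thm:bounded_resolvent}, and the resolvent identity you write are all correct, and this is exactly the standard argument behind results of this type.

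There is, however, one genuine gap in the middle of your assembly. The identity gives
$$
(\boldsymbol{E}-\boldsymbol{E}_h)\boldsymbol{f}=\frac{1}{2\pi i}\int_{\gamma}R_{z}(\bT_h)\,(\bT-\bT_h)\,R_{z}(\bT)\boldsymbol{f}\,dz,
$$
so the difference $\bT-\bT_h$ acts on $R_{z}(\bT)\boldsymbol{f}$, \emph{not} on $\boldsymbol{f}$. Combining your three ingredients therefore produces
$$
\|(\boldsymbol{E}-\boldsymbol{E}_h)\boldsymbol{f}\|_{1,\O}\leq C_{\mu_s}\,\|\bT-\bT_h\|_{\mathcal{L}(\H^1_0(\O)^n)}\,\|\boldsymbol{f}\|_{1,\O},
$$
where the factor $\texttt{d}_\kappa$ from the contour length cancels against the factor $\texttt{d}_\kappa^{-1}$ from the bound on $R_z(\bT)$. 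This is a bound by the \emph{operator norm} of $\bT-\bT_h$ times $\|\boldsymbol{f}\|_{1,\O}$, which is not the same as the asserted bound by $\|(\bT-\bT_h)\boldsymbol{f}\|_{1,\O}$: the latter quantity can be small (even zero) for a particular $\boldsymbol{f}$ while $(\bT-\bT_h)R_z(\bT)\boldsymbol{f}$ is not, so the first inequality of the lemma does not follow from your pointwise estimates for arbitrary $\boldsymbol{f}$. It is recovered when $\boldsymbol{f}$ lies in the spectral subspace $\mathfrak{E}$ associated with $\kappa$: there $R_z(\bT)\boldsymbol{f}=(z-\kappa)^{-1}\boldsymbol{f}$, the scalar $(z-\kappa)^{-1}$ has modulus $\texttt{d}_\kappa^{-1}$ on $\gamma$, and $(\bT-\bT_h)$ then genuinely lands on $\boldsymbol{f}$; this eigenspace restriction is all that is needed for the application in Theorem \ref{millar2015}. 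Note also that your operator-norm version, combined with Corollary \ref{cor:app_TT1}, still yields the outer estimate $\|(\boldsymbol{E}-\boldsymbol{E}_h)\boldsymbol{f}\|_{1,\O}\leq C_{\mu_s}h^{\min\{s,k\}}\|\boldsymbol{f}\|_{1,\O}$, so your final conclusion survives; it is only the intermediate term of the chain that your argument, as written, does not justify.
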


\subsection{A priori error estimates}
\label{sec:conv}
Now our aim is to obtain error estimates for the eigenfunctions and eigenvalues. with our method.
We begin by noticing that, according to Corollary \ref{cor:app_TT1}, if $\eta \in (0,1)$ is an isolated eigenvalue of $\bT$ with multiplicity $m$, and $\mathfrak{E}$ its associated eigenspace, then, there exist $m$
eigenvalues $\eta_{h}^{(1)},...,\eta_{h}^{(m)}$ of $\bT_{h}$, repeated according to their respective multiplicities, which converge to $\eta$.
Let $\mathfrak{E}_{h}$ be the direct sum of their corresponding associated eigenspaces (see \cite{MR0203473}) and let us define the  \textit{gap} $\hdel$ between two closed
subspaces $\CM$ and $\CN$ of $\H^1(\O)^n$ by
$$
\hdel(\CM,\CN)
:=\max\big\{\delta(\CM,\CN),\delta(\CN,\CM)\big\}, \text{ where } \delta(\CM,\CN)
:=\sup_{\underset{\left\|x\right\|_{1,\O}=1}{x\in\CM}}
\left(\inf_{y\in\CN}\left\|x-y\right\|_{1,\O}\right).
$$
With these definitions and hand, we derive the following error estimates for eigenfunctions and eigenvalues. Since the proof is direct from 
applying the  results of  \cite{MR1115235,MR1655512,MR1642801}, we do not incorporate further details.
\begin{theorem}
	\label{millar2015}
	For $k\geq1$, the following error estimates for the eigenfunctions and eigenvalues hold
$$
		\widehat{\delta} ( \mathfrak{E}, \mathfrak{E}_{h} )\leq \frac{C_{\mu_s}}{\texttt{d}_\kappa}\,h^{ \min\{ s,k \}} \quad \mbox{and} \quad | \eta-\eta_{h}(i) | \leq \frac{C_{\mu_s}}{\texttt{d}_\kappa}\,h^{ \min\{s,k \}},
$$
	where the hidden constants are independent of $h$.
\end{theorem}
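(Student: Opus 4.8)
The plan is to reduce the statement to the abstract spectral approximation theory for compact operators developed in \cite{MR1115235,MR1655512,MR1642801} and to check that all of its hypotheses have already been established above. Concretely, I would first record the three structural facts that drive the argument: that $\bT$ is compact and selfadjoint with respect to the $\L^2(\O)$ inner product, so that every nonzero eigenvalue is isolated, of finite multiplicity, and \emph{semisimple} (its ascent equals one); that $\bT_h\to\bT$ in the operator norm of $\H_0^1(\O)^n$, which is exactly Corollary~\ref{cor:app_TT1}; and that the scheme is spurious free, Theorem~\ref{thm:spurious_free}. Together with the resolvent bounds of Proposition~\ref{prop:bounded_resolv} and Lemma~\ref{thm:bounded_resolvent}, these facts ensure that for $h$ small enough exactly $m=\dim\mathfrak{E}$ discrete eigenvalues $\eta_h^{(1)},\dots,\eta_h^{(m)}$ lie inside the contour $\gamma=\partial D_\kappa$, that they converge to $\eta$, and that $\dim\mathfrak{E}_h=m$.

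For the eigenfunction estimate I would argue directly from the spectral projector bound of Lemma~\ref{lemma:spectral_projectors}. To bound $\delta(\mathfrak{E},\mathfrak{E}_h)$, take $x\in\mathfrak{E}$ with $\|x\|_{1,\O}=1$; since $\boldsymbol{E}x=x$ and $\boldsymbol{E}_hx\in\mathfrak{E}_h$, the choice $y=\boldsymbol{E}_hx$ gives
\[
\inf_{y\in\mathfrak{E}_h}\|x-y\|_{1,\O}\le\|(\boldsymbol{E}-\boldsymbol{E}_h)x\|_{1,\O}\le\frac{C_{\mu_s}}{\texttt{d}_\kappa}\,h^{\min\{s,k\}}.
\]
Symmetrically, for $y\in\mathfrak{E}_h$ with $\|y\|_{1,\O}=1$ one has $\boldsymbol{E}_hy=y$, so the choice $x=\boldsymbol{E}y\in\mathfrak{E}$ bounds $\delta(\mathfrak{E}_h,\mathfrak{E})$ by the same quantity, using again that Lemma~\ref{lemma:spectral_projectors} holds for every argument in the space. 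Taking the maximum of the two one-sided distances yields the gap estimate $\hdel(\mathfrak{E},\mathfrak{E}_h)\le \frac{C_{\mu_s}}{\texttt{d}_\kappa}h^{\min\{s,k\}}$.

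For the eigenvalue estimate I would invoke the abstract bound for individual eigenvalues of \cite{MR1115235}, which for a semisimple eigenvalue (ascent one, guaranteed here by the selfadjointness of $\bT$ and $\bT_h$) reads
\[
|\eta-\eta_h^{(i)}|\le C\,\|(\bT-\bT_h)|_{\mathfrak{E}}\|,
\]
with the constant $C$ controlled by the resolvent norm along $\gamma$, and hence by $\texttt{d}_\kappa^{-1}$ through Proposition~\ref{prop:bounded_resolv} and Lemma~\ref{thm:bounded_resolvent}. The restricted operator norm is then estimated, for every $x\in\mathfrak{E}$, directly by Corollary~\ref{cor:app_TT1}, giving $\|(\bT-\bT_h)|_{\mathfrak{E}}\|\le C_{\mu_s}h^{\min\{s,k\}}$ (the additional regularity $x\in\H^{1+s}(\O)^n$ from Theorem~\ref{th:regularity} being exactly what underlies the rate $h^{s}$ in that corollary). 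Combining the two displays yields the asserted bound; since $s\le\widehat{s}\le 1\le k$ one finally notes $\min\{s,k\}=s$, which reconciles the rates coming from the various auxiliary results.

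The routine part is the gap estimate, which is essentially a rewriting of Lemma~\ref{lemma:spectral_projectors}. The main obstacle is the eigenvalue estimate: one must apply the abstract theory carefully so that, first, the semisimplicity/ascent-one property is correctly deduced from the selfadjointness of $\bT$ and $\bT_h$, even though selfadjointness is with respect to $\L^2(\O)$ while all errors are measured in the $\H^1(\O)^n$ norm; and second, the dependence of the constant on $\texttt{d}_\kappa$ is faithfully tracked through the resolvent estimates on $\gamma$, so that the final constant is exactly of the stated form $C_{\mu_s}/\texttt{d}_\kappa$. I would also remark that the estimate is presented at first order (the same rate as the eigenfunctions), which is the conservative bound furnished by the general theory rather than any improved, doubled rate.
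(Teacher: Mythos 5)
Your proposal is correct and follows essentially the same route as the paper, which simply states that the proof is ``direct from applying the results of \cite{MR1115235,MR1655512,MR1642801}'' without giving details. You have supplied exactly those details---norm convergence from Corollary~\ref{cor:app_TT1}, the gap bound via the spectral projectors of Lemma~\ref{lemma:spectral_projectors}, and the conservative first-order eigenvalue bound from the abstract theory (consistent with the paper deferring the doubled rate to a later theorem)---so the argument matches the intended one.
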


The following result states a preliminary error estimate for the eigenvalues.
\begin{lemma}\label{lmmtriple}
Let $(\kappa,\bu,\boldsymbol{\omega},p)\in\mathbb{R}\times\H_0^1(\O)^n\times\L^2(\O)^{n(n-1)/2}\times\L^2(\O)$ be the solution of Problem \ref{problem1} with $\|\bu\|_{1,\O}=1$
and let $(\kappa_h,\bu_h,\boldsymbol{\omega}_h,p_h)\in\mathbb{R}\times\mathbf{H}_h\times\mathbf{Z}_h\times Q_h$ be its finite element approximation given as  the solution of Problem \ref{problem1_discrete} with $\|\bu_h\|_{1,\O}=1$. Then, there exists $C_{\mu_s}>0$ such that 
\begin{equation*}
\vertiii{(\bu-\bu_h,\boldsymbol{\omega}-\boldsymbol{\omega}_h,p-p_h)}\leq \frac{C_{\mu_s}}{\texttt{d}_\kappa}h^{\min\{s,k\}}.
\end{equation*}
\end{lemma}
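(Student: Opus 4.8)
The plan is to route the full triple-norm error through the source problem, exploiting that both the exact and the discrete eigenfunctions are themselves solutions of (continuous, respectively discrete) source problems. Since $(\kappa,\bu,\boldsymbol{\omega},p)$ solves Problem \ref{problem1}, the triplet $(\bu,\boldsymbol{\omega},p)$ is exactly the solution of the continuous source problem \eqref{eq:source} with load $\bF=\kappa\bu$, i.e.\ $\bu=\bT(\kappa\bu)$; likewise $(\bu_h,\boldsymbol{\omega}_h,p_h)$ is the solution of the discrete source problem \eqref{eq:source_h} with load $\kappa_h\bu_h$. I would introduce the intermediate triplet $(\widehat{\bu}_h,\widehat{\boldsymbol{\omega}}_h,\widehat{p}_h)$ defined as the discrete source solution driven by the \emph{exact} data $\bF=\kappa\bu$ (so $\widehat{\bu}_h=\bT_h(\kappa\bu)$), and split
\begin{equation*}
\vertiii{(\bu-\bu_h,\boldsymbol{\omega}-\boldsymbol{\omega}_h,p-p_h)}\leq \vertiii{(\bu-\widehat{\bu}_h,\boldsymbol{\omega}-\widehat{\boldsymbol{\omega}}_h,p-\widehat{p}_h)}+\vertiii{(\widehat{\bu}_h-\bu_h,\widehat{\boldsymbol{\omega}}_h-\boldsymbol{\omega}_h,\widehat{p}_h-p_h)}.
\end{equation*}

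The first term compares the continuous and discrete source solutions for the common load $\bF=\kappa\bu$, so it is controlled directly by the source-problem estimate of the Remark following Corollary \ref{cor:app_TT1}: by \eqref{eq:_aux_estimate} it is bounded by $C_{\mu_s}h^{s}\|\kappa\bu\|_{1,\O}=C_{\mu_s}\kappa\,h^{s}$, using $\|\bu\|_{1,\O}=1$, and since $s<1\leq k$ this equals $C_{\mu_s}\kappa\,h^{\min\{s,k\}}$. For the second term, both arguments are discrete source solutions, hence by linearity their difference solves the discrete source problem with load $\kappa\bu-\kappa_h\bu_h$; invoking the discrete stability estimate (the discrete counterpart of Theorem \ref{th:regularity}, available since Problem \ref{problem1_discrete} is well posed) gives
\begin{equation*}
\vertiii{(\widehat{\bu}_h-\bu_h,\widehat{\boldsymbol{\omega}}_h-\boldsymbol{\omega}_h,\widehat{p}_h-p_h)}\leq C_{\mu_s}\|\kappa\bu-\kappa_h\bu_h\|_{0,\O}.
\end{equation*}

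It then remains to bound $\|\kappa\bu-\kappa_h\bu_h\|_{0,\O}$. Writing $\kappa\bu-\kappa_h\bu_h=\kappa(\bu-\bu_h)+(\kappa-\kappa_h)\bu_h$ and using $\|\bu_h\|_{1,\O}=1$, I would estimate $\|\bu-\bu_h\|_{0,\O}\leq\|\bu-\bu_h\|_{1,\O}$ by the gap bound of Theorem \ref{millar2015} (after fixing the sign, or more generally choosing the representative in $\mathfrak{E}_h$ closest to $\bu$), and $|\kappa-\kappa_h|$ by the eigenvalue estimate of the same theorem, passing from $\eta=1/\kappa$, $\eta_h=1/\kappa_h$ through $|\kappa-\kappa_h|=|\eta-\eta_h|/(\eta\eta_h)$ with $\eta_h$ bounded away from $0$ for $h$ small. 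Both contributions are $\tfrac{C_{\mu_s}}{\texttt{d}_\kappa}h^{\min\{s,k\}}$, and collecting everything — absorbing the fixed factor $\kappa$ and using that $\texttt{d}_\kappa$ is bounded above to rewrite the first term with the $1/\texttt{d}_\kappa$ weight — yields the claim. The main obstacle is precisely the second term: the triple norm controls the rotation and pressure errors, which do \emph{not} appear in Theorem \ref{millar2015}, so one genuinely needs the intermediate-triplet device together with discrete inf-sup stability to upgrade the scalar eigenvalue/eigenfunction convergence into full convergence of all three components; the sign/representative selection underlying the gap estimate is the remaining technical point to handle with care.
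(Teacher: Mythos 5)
Your proof is correct and is essentially the argument the paper intends: the paper's own "proof" is a one-line citation to \cite[Lemma 12]{MR3451491}, whose argument is exactly your scheme — view both eigenfunction triplets as (continuous, resp.\ discrete) source solutions with loads $\kappa\bu$ and $\kappa_h\bu_h$, insert the discrete source solution driven by the exact load $\kappa\bu$, bound the first difference via the source-problem estimate \eqref{eq:_aux_estimate}, and bound the second via discrete stability together with $\|\kappa\bu-\kappa_h\bu_h\|_{0,\O}$, which Theorem \ref{millar2015} controls. The two technical points you flag (sign/representative normalization in the gap estimate, and $h$-independence of the discrete stability constant) are genuine but are resolved at exactly the level of rigor the paper itself assumes, so there is no gap relative to the paper's treatment.
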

\begin{proof}
The result follows by adapting the proof of \cite[Lemma 12]{MR3451491} to our spectral problem.
\end{proof}

We are now in position to provide an improvement in the order of convergence presented in Theorem \ref{millar2015}. } 
\begin{theorem}
	For $k\geq 1$, there exists a strictly positive constant such that, for $h<h_0$ there holds
$$
		|\kappa-\kappa_h|\leq \frac{C_{\mu_s}}{\texttt{d}_\kappa^2} h^{2\min\{s,k\}},
$$
	where $C_{\mu_s}>0$,  is independent of $h$.
\end{theorem}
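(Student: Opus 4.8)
The plan is to establish a Babu\v ska--Osborn-type identity that expresses the eigenvalue error as a quadratic form in the eigenfunction error, and then to feed into it the first-order estimates already in hand (Lemma~\ref{lmmtriple} and Theorem~\ref{millar2015}), which is exactly what doubles the order of convergence.

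First I would abbreviate $U:=(\bu,\boldsymbol{\omega},p)$, $U_h:=(\bu_h,\boldsymbol{\omega}_h,p_h)$, and introduce the mass form $B(U,V):=(\bu,\bv)_{0,\Omega}$, so that Problem~\ref{problem1} and Problem~\ref{problem1_discrete} read $\mathcal{A}(U,V)=-\kappa B(U,V)$ and $\mathcal{A}(U_h,V_h)=-\kappa_h B(U_h,V_h)$. Using the symmetry of $\mathcal{A}$ (inherited from the symmetry of $a$) and the fact that $U_h$ is an admissible test function in the continuous problem, I would expand $\mathcal{A}(U-U_h,U-U_h)$ and substitute the three relations $\mathcal{A}(U,U)=-\kappa B(U,U)$, $\mathcal{A}(U,U_h)=-\kappa B(U,U_h)$ and $\mathcal{A}(U_h,U_h)=-\kappa_h B(U_h,U_h)$. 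Adding and subtracting $\kappa B(U-U_h,U-U_h)$, all cross terms cancel and one is left with the exact identity
$$
(\kappa-\kappa_h)\,\|\bu_h\|_{0,\Omega}^2=\mathcal{A}(U-U_h,U-U_h)+\kappa\,\|\bu-\bu_h\|_{0,\Omega}^2.
$$

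Next I would bound the right-hand side. By continuity of $\mathcal{A}$ in the triple norm (a direct Cauchy--Schwarz estimate on the definitions of $a$ and $b$, with constant possibly depending on $\mu_s$), one has $|\mathcal{A}(U-U_h,U-U_h)|\leq C_{\mu_s}\vertiii{(\bu-\bu_h,\boldsymbol{\omega}-\boldsymbol{\omega}_h,p-p_h)}^2$, and Lemma~\ref{lmmtriple} converts this into $\tfrac{C_{\mu_s}}{\texttt{d}_\kappa^2}h^{2\min\{s,k\}}$; this is precisely where the square of the $\texttt{d}_\kappa^{-1}$ factor enters. For the second term I would use $\|\bu-\bu_h\|_{0,\Omega}\leq\|\bu-\bu_h\|_{1,\Omega}$ together with the eigenfunction (gap) estimate of Theorem~\ref{millar2015} for the matched eigenfunction, obtaining again a bound of order $\tfrac{C_{\mu_s}}{\texttt{d}_\kappa^2}h^{2\min\{s,k\}}$. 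Dividing by $\|\bu_h\|_{0,\Omega}^2$ then yields the claimed estimate, provided the denominator is bounded away from zero.

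The step requiring the most care, and the main obstacle, is exactly this lower bound on $\|\bu_h\|_{0,\Omega}$ under the normalization $\|\bu_h\|_{1,\Omega}=1$. One argues that $\|\bu\|_{0,\Omega}>0$ (a nonzero eigenfunction cannot vanish in $L^2$), that the paired discrete eigenfunction $\bu_h$ genuinely converges to $\bu$ in $H^1$ by the gap convergence of Theorem~\ref{millar2015}, and hence by the triangle inequality $\|\bu_h\|_{0,\Omega}\geq\|\bu\|_{0,\Omega}-\|\bu-\bu_h\|_{1,\Omega}\geq c>0$ for $h<h_0$; the constant $c$ is uniform in $h$ and is absorbed into $C_{\mu_s}$. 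Everything else is routine, since the identity is exact and both numerator terms are controlled simply by squaring the already-established first-order bounds, which is what produces both the rate $h^{2\min\{s,k\}}$ and the $\texttt{d}_\kappa^{-2}$ dependence.
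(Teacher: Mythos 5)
Your proof is correct and follows essentially the same approach as the paper: the same exact identity $(\kappa-\kappa_h)(\bu_h,\bu_h)_{0,\Omega}=\mathcal{A}(\boldsymbol{U}-\boldsymbol{U}_h,\boldsymbol{U}-\boldsymbol{U}_h)+\kappa(\bu-\bu_h,\bu-\bu_h)_{0,\Omega}$, the same continuity-type bound of $\mathcal{A}$ in the triple norm, and the same use of Lemma~\ref{lmmtriple} to square the first-order rate and produce the $\texttt{d}_\kappa^{-2}$ factor. The only difference is the concluding lower bound on $\|\bu_h\|_{0,\Omega}$, which you obtain by a triangle inequality plus gap convergence, whereas the paper argues through the discrete eigenvalue identity together with Lemma~\ref{lmm:elliptic} and Poincar\'e's inequality; both arguments serve the same purpose, and yours is arguably the more direct one.
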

\begin{proof}
Let us define $\boldsymbol{U}:=(\bu,\boldsymbol{\omega}, p)$ and $\boldsymbol{U}_h:=(\bu_h,\boldsymbol{\omega}_h, p_h)$ with $\|\bu\|_{1,\O}=\|\bu_h\|_{1,\O}=1$. Recalling the definition of $\mathcal{A}(\cdot,\cdot)$ given in \eqref{eq:formA} and the previous definitions,  we consider the following  eigenvalue problems
$$
	\mathcal{A}(\boldsymbol{U},\boldsymbol{U})=-\kappa(\bu,\bu),\quad
	\mathcal{A}(\boldsymbol{U}_h,\boldsymbol{U}_h)=-\kappa_h(\bu_h,\bu_h),
$$
and the well known identity
$$
	(\kappa-\kappa_h)(\bu_h,\bu_h)=\mathcal{A}(\boldsymbol{U}-\boldsymbol{U}_h,\boldsymbol{U}-\boldsymbol{U}_h)+\kappa(\bu-\bu_h,\bu-\bu_h)_{0,\O}.
$$
Taking absolute value to this identity, we obtain that
$$
\begin{aligned}
	\left|(\kappa-\kappa_h)(\bu_h,\bu_h)\right|\leq\underbrace{\left|\mathcal{A}(\boldsymbol{U}-\boldsymbol{U}_h,\boldsymbol{U}-\boldsymbol{U}_h)\right|}_{\mathbf{I}}+\underbrace{\left|\kappa(\bu-\bu_h,\bu-\bu_h)_{0,\O}\right|}_{\mathbf{II}}.
\end{aligned}
$$
For the term $\mathbf{I}$, let us consider the decompositions $p=p_0+P_m{p}$ and $p_h=p_{0,h}+P_mp_h$. Then, using that $\bu=\bu_h=\boldsymbol{0}$ on $\partial\Omega$ and integration by parts we have 
$$
\begin{aligned}
&\left|\mathcal{A}(\boldsymbol{U}-\boldsymbol{U}_h,\boldsymbol{U}-\boldsymbol{U}_h)\right|\\
&\leq|a((\boldsymbol{\omega}-\boldsymbol{\omega}_h,p-p_h),(\boldsymbol{\omega}-\boldsymbol{\omega}_h,p-p_h))|+2|b((\boldsymbol{\omega}-\boldsymbol{\omega}_h,p-p_h),\bu-\bu_h)|\\
&\leq\Vert \boldsymbol{\omega}-\boldsymbol{\omega}_h\Vert_{0,\Omega}^2+(2\mu_s+\lambda_s)^{-1}\Vert p - p_h\Vert_{0,\Omega}^2\\
&\hspace{2cm}+2\int_{\O}\left|\div(\bu-\bu_h)(p-p_h)\right|+2\sqrt{\mu_s}\int_{\O}\left|(\boldsymbol{\omega}-\boldsymbol{\omega}_h)\cdot\curl(\bu-\bu_h)\right|\\
&\leq \mu_s\Vert \curl(\bu-\bu_h)\Vert_{0,\Omega}^2+ 2\Vert \boldsymbol{\omega}-\boldsymbol{\omega}_h\Vert_{0,\Omega}^2 + \mu_s^{-1}\Vert p_0-p_{0,h}\Vert_{0,\Omega}^2\\
&\hspace{4cm}+ \mu_s\Vert\div(\bu-\bu_h)\Vert_{0,\Omega}^2+(2\mu_s+\lambda_s)^{-1}\Vert p - p_h\Vert_{0,\Omega}^2
\\
&\leq C\vertiii{(\bu-\bu_h,\boldsymbol{\omega}-\boldsymbol{\omega}_h,p-p_h)}^2,
\end{aligned}
$$
with $C=2$. on the other hand, for the  term $\mathbf{II}$ it follows that
\begin{multline*}
\left|\kappa(\bu-\bu_h,\bu-\bu_h)_{0,\O}\right|\leq |\kappa|\Vert \bu - \bu_h\Vert_{0,\Omega}^2\leq \frac{|\kappa|}{\mu_s}\Vert\bu-\bu_h\Vert_{1,\Omega}^2\\
\leq \frac{|\kappa|}{\mu_s}\vertiii{(\bu-\bu_h,\boldsymbol{\omega}-\boldsymbol{\omega}_h,p-p_h)}^2.
\end{multline*}
Observe that from Lemma \ref{lmmtriple},  we have
$$
\vertiii{({\bu}-{\bu}_h,{\boldsymbol{\omega}}-{\boldsymbol{\omega}}_h,{p}-{p}_h)}^2\leq \frac{C_{\mu_s}}{\texttt{d}_\kappa^2} h^{2\min\{s,k\}}.
$$
On the other hand, by Lemma \ref{lmm:elliptic} and  Poincar\'e inequality,  together with the fact that $\kappa_h^{(i)} \to \kappa$ as $h$
goes to 0, we have
$$(\bu_h,\bu_h)=\dfrac{\mathcal{A}(\boldsymbol{U}_h,\boldsymbol{U}_h)}{\kappa_h}\geq C_{\mu_{s}}\dfrac{\|\bu_h\|_{1,\O}^2}{\kappa_h}>0.$$
This concludes the proof.
	\end{proof}
	\begin{remark}
	\label{rmrk:ordoble}
	From the above proof it is easy to note that 
$$
	|\kappa-\kappa_h|\leq C_{\mu_s}\vertiii{({\bu}-{\bu}_h,{\boldsymbol{\omega}}-{\boldsymbol{\omega}}_h,{p}-{p}_h)}^2.
$$
	This result will be needed later in the paper, when the a posteriori estimator is derived.
	\end{remark}
	
	The following technical result available in \cite{MR3647956} states that,  since the numerical method is spurious free, for $h$ small enough, except for $\kappa_h$, the rest of the eigenvalues of Problem \ref{problem1_discrete} are well separated from $\kappa$.
\begin{proposition}\label{separa_eig}
Let us enumerate the eigenvalues of Problem \ref{problem1}  and Problem \ref{problem1_discrete}  in increasing order as follows: $0<\kappa_1\leq\cdots\kappa_i\leq\cdots$ and 
$0<\kappa_{h,1}\leq\cdots\kappa_{h,i}\leq\cdots$. Let us assume  that $\kappa_J$ is a simple eigenvalue of Problem \ref{problem1_discrete} . Then, there exists $h_0>0$ such that
$$
|\kappa_J-\kappa_{h,i}|\geq\frac{1}{2}\min_{j\neq J}|\kappa_j-\kappa_J|\quad\forall i\leq \dim\mathbf{H}_h,\,\,i\neq J,\quad \forall h<h_0.
$$
\end{proposition}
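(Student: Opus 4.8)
The plan is to combine the eigenvalue convergence of Theorem \ref{millar2015} with the spurious-free property of Theorem \ref{thm:spurious_free}, using the convergence of the spectral projectors in Lemma \ref{lemma:spectral_projectors} as the quantitative device that limits how many discrete eigenvalues may cluster near $\kappa_J$. I would work with the eigenvalues $\eta_i=1/\kappa_i$ of $\bT$ and $\eta_{h,i}=1/\kappa_{h,i}$ of $\bT_h$, since $t\mapsto 1/t$ is a decreasing homeomorphism of $(0,\infty)$ that shares eigenvectors between $\bT$ (resp. $\bT_h$) and Problem \ref{problem1} (resp. Problem \ref{problem1_discrete}), and therefore carries separation and convergence statements for the $\eta$'s to the corresponding ones for the $\kappa$'s. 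Set $G:=\min_{j\neq J}|\kappa_j-\kappa_J|$, which is strictly positive because $\kappa_J$ is simple and hence isolated. The goal is to show that, for $h$ small, the open $\kappa$-disk of radius $G/2$ about $\kappa_J$ contains exactly one discrete eigenvalue, and that it is $\kappa_{h,J}$.

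For the counting I would fix the circle $\gamma=\partial D_\kappa$ centered at $\eta_J$ introduced before Lemma \ref{lemma:spectral_projectors}, which by construction satisfies $D_\kappa\cap\sp(\bT)=\{\eta_J\}$, and consider the associated projectors $\boldsymbol{E}$ and $\boldsymbol{E}_h$. Since $\kappa_J$, equivalently $\eta_J$, is simple, $\mathrm{rank}(\boldsymbol{E})=1$. By Lemma \ref{lemma:spectral_projectors} we have $\boldsymbol{E}_h\to\boldsymbol{E}$ in operator norm, and projectors converging in norm have eventually equal ranks (see \cite{MR0203473}); hence there is $h_0>0$ with $\mathrm{rank}(\boldsymbol{E}_h)=1$ for all $h<h_0$. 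As $\mathrm{rank}(\boldsymbol{E}_h)$ equals the number of eigenvalues $\eta_{h,i}$ of $\bT_h$ enclosed by $\gamma$, counted with multiplicity, exactly one discrete eigenvalue is enclosed. By Theorem \ref{millar2015} this eigenvalue converges to $\eta_J$, so the corresponding $\kappa_{h,i}$ converges to $\kappa_J$; shrinking $h_0$ if necessary, this unique discrete eigenvalue satisfies $|\kappa_{h,i}-\kappa_J|<G/2$, while every other discrete eigenvalue lies outside $\gamma$ and hence, after possibly shrinking $h_0$ again, outside the $\kappa$-disk of radius $G/2$ about $\kappa_J$.

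It remains to identify the enclosed index as $J$. Applying the same projector-rank argument to a contour enclosing exactly the finitely many spectral points of $\bT$ corresponding to $\kappa_1,\dots,\kappa_{J-1}$ shows that precisely $J-1$ discrete eigenvalues, counted with multiplicity, approximate the part of the spectrum below $\kappa_J$; by Theorem \ref{millar2015} they converge to the matching $\kappa_j$ and, for $h$ small, remain strictly below the disk of radius $G/2$ about $\kappa_J$. Because the method is spurious free, no further discrete eigenvalue can appear below $\kappa_J$. Hence the unique discrete eigenvalue inside that disk is preceded, in the increasing enumeration, by exactly $J-1$ smaller discrete eigenvalues, so its index is $J$; that is, it is $\kappa_{h,J}$.

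Combining the two previous paragraphs, for $h<h_0$ the only discrete eigenvalue within $G/2$ of $\kappa_J$ is $\kappa_{h,J}$, so for every admissible $i\neq J$ one has $|\kappa_J-\kappa_{h,i}|\geq G/2=\tfrac12\min_{j\neq J}|\kappa_j-\kappa_J|$, which is the assertion. I expect the indexing step of the third paragraph to be the main obstacle: turning the purely local fact that one discrete eigenvalue approaches $\kappa_J$ into the global, order-respecting statement that its index is exactly $J$ requires simultaneously completeness of the approximation below $\kappa_J$ (via Theorem \ref{millar2015}) and the absence of spurious modes (Theorem \ref{thm:spurious_free}), so that no spurious eigenvalue can slip in below $\kappa_J$ and corrupt the count. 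Alternatively, one may simply invoke \cite{MR3647956}, where this technical separation result is established.
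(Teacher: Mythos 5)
Your proposal is correct in substance, but note that the paper does not actually prove this proposition: it is imported wholesale as a "technical result available in \cite{MR3647956}," which is precisely the alternative you mention in your last sentence. Your blind proof therefore supplies what the paper omits, namely a self-contained derivation, and the mechanism you use --- norm convergence of the spectral projectors (Lemma \ref{lemma:spectral_projectors}) forcing equality of ranks for small $h$, hence exactly one discrete eigenvalue inside the contour around the simple eigenvalue, combined with a counting argument below $\kappa_J$ that uses completeness of the approximation (Theorem \ref{millar2015}) together with the spurious-free property (Theorem \ref{thm:spurious_free}) to pin the index of that eigenvalue to exactly $J$ --- is the standard machinery behind separation results of this type and is sound. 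Two technical points deserve care if this were to be written out in full. First, Lemma \ref{lemma:spectral_projectors} is stated only for the disk $D_\kappa$ around a single isolated eigenvalue, so for the count below $\kappa_J$ you should either apply it separately to each of the finitely many distinct eigenvalues among $\kappa_1,\dots,\kappa_{J-1}$ or rerun the underlying resolvent-convergence argument on one larger fixed contour; both are routine. Second, when invoking the spurious-free property you must choose the open set $V$ as a union of small disks around \emph{all} points of $\sp(\bT)$, including the accumulation point $0$ of the eigenvalues of $\bT$ (which corresponds to $\kappa\to\infty$); this is what guarantees that discrete eigenvalues with large $\kappa$ cluster only near $0$ in the $\eta$-variable and cannot slip into the region below $\kappa_J$ and corrupt the index count. (You also implicitly, and correctly, read the hypothesis as ``$\kappa_J$ is a simple eigenvalue of Problem \ref{problem1}''; the statement's reference to Problem \ref{problem1_discrete} there is evidently a typo.) In short: the paper buys brevity by citation, while your argument buys self-containedness at the cost of these bookkeeping details.
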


On the other hand, a correct control of the error $\|\bu-\bu_h\|_{0,\O}$ will be needed for the a posteriori error analysis. In order to obtain such a bound for the error, we proceed as is customary, by means of a duality argument. To perform this analysis, we adapt the arguments presented in \cite{MR3197278} for the Stokes problem.
	\begin{lemma}
	\label{lmm:norma_L2}
	Given $\bF\in \L^2(\O)^n$, let $(\widetilde{\bu},\widetilde{\boldsymbol{\omega}}, \widetilde{p})\in\H_0^1(\O)^n\times\L^2(\O)^{n(n-1)/2}\times \L^2(\O)$ be the solution of \eqref{eq:source} and 
	$(\widetilde{\bu}_h,\widetilde{\boldsymbol{\omega}}_h, \widetilde{p}_h)\in\mathbf{H}_h\times\mathbf{Z}_h\times Q_h$ be its finite element approximation, given as the solution of \eqref{eq:source_h}. Then, there exists a positive constant $C_{\mu_s}$ such that 
$$
	\|\widetilde{\bu}-\widetilde{\bu}_h\|_{0,\O}\leq C_{\mu_s}h^s\vertiii{(\widetilde{\bu}-\widetilde{\bu}_h,\widetilde{\boldsymbol{\omega}}-\widetilde{\boldsymbol{\omega}}_h,\widetilde{p}-\widetilde{p}_h)}.
$$
	\end{lemma}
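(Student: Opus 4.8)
The plan is to run a standard duality (Aubin--Nitsche) argument, exploiting the symmetry of $\mathcal{A}(\cdot,\cdot)$ and the elliptic regularity of Theorem \ref{th:regularity}. Throughout, write $\be:=\widetilde{\bu}-\widetilde{\bu}_h$ and collect the whole error in the triple $\boldsymbol{\mathcal{E}}:=(\widetilde{\bu}-\widetilde{\bu}_h,\widetilde{\boldsymbol{\omega}}-\widetilde{\boldsymbol{\omega}}_h,\widetilde{p}-\widetilde{p}_h)$, so that $\be$ is the first component of $\boldsymbol{\mathcal{E}}$. First I would introduce the auxiliary (dual) problem: find $(\boldsymbol{\phi},\boldsymbol{\chi},\psi)\in\H_0^1(\O)^n\times\L^2(\O)^{n(n-1)/2}\times\L^2(\O)$ such that
$$
\mathcal{A}((\bv,\boldsymbol{\theta},q),(\boldsymbol{\phi},\boldsymbol{\chi},\psi))=(\be,\bv)_{0,\O}
$$
for all test triples $(\bv,\boldsymbol{\theta},q)$. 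Since $a(\cdot,\cdot)$ is symmetric, $\mathcal{A}(\cdot,\cdot)$ is symmetric, so this is exactly the source problem \eqref{eq:source} with datum $-\be\in\L^2(\O)^n$. Hence it is well posed by Lemma \ref{lmm:elliptic}, and the elliptic regularity of Theorem \ref{th:regularity} applies and gives the extra smoothness
$$
\|\boldsymbol{\phi}\|_{1+s,\O}+\|\boldsymbol{\chi}\|_{s,\O}+\|\psi\|_{s,\O}\leq \widehat{C}\,\|\be\|_{0,\O}.
$$

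Next I would test the dual problem with the error triple itself, taking $(\bv,\boldsymbol{\theta},q)=\boldsymbol{\mathcal{E}}$, so that $\bv=\be$ and
$$
\|\be\|_{0,\O}^2=(\be,\be)_{0,\O}=\mathcal{A}(\boldsymbol{\mathcal{E}},(\boldsymbol{\phi},\boldsymbol{\chi},\psi)).
$$
Subtracting \eqref{eq:source_h} from \eqref{eq:source} restricted to discrete test functions yields the Galerkin orthogonality $\mathcal{A}(\boldsymbol{\mathcal{E}},(\bv_h,\boldsymbol{\theta}_h,q_h))=0$ for every $(\bv_h,\boldsymbol{\theta}_h,q_h)\in\mathbf{H}_h\times\mathbf{Z}_h\times Q_h$. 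This lets me insert, at no cost, an arbitrary discrete interpolant $(\boldsymbol{\phi}_h,\boldsymbol{\chi}_h,\psi_h)$ of the dual solution:
$$
\|\be\|_{0,\O}^2=\mathcal{A}(\boldsymbol{\mathcal{E}},(\boldsymbol{\phi}-\boldsymbol{\phi}_h,\boldsymbol{\chi}-\boldsymbol{\chi}_h,\psi-\psi_h)).
$$

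Then I would invoke the boundedness of $\mathcal{A}(\cdot,\cdot)$ with respect to the triple norm (each of the $a$-term and the two $b$-couplings is controlled by Cauchy--Schwarz against the weighted norm, with a constant that may depend on $\mu_s$) to get
$$
\|\be\|_{0,\O}^2\leq C_{\mu_s}\,\vertiii{\boldsymbol{\mathcal{E}}}\;\vertiii{(\boldsymbol{\phi}-\boldsymbol{\phi}_h,\boldsymbol{\chi}-\boldsymbol{\chi}_h,\psi-\psi_h)}.
$$
Choosing the canonical interpolants in $\mathbf{H}_h\times\mathbf{Z}_h\times Q_h$ and applying the standard interpolation estimates together with the regularity bound of the first paragraph gives
$$
\vertiii{(\boldsymbol{\phi}-\boldsymbol{\phi}_h,\boldsymbol{\chi}-\boldsymbol{\chi}_h,\psi-\psi_h)}\leq C_{\mu_s}h^{s}\big(\|\boldsymbol{\phi}\|_{1+s,\O}+\|\boldsymbol{\chi}\|_{s,\O}+\|\psi\|_{s,\O}\big)\leq C_{\mu_s}h^{s}\|\be\|_{0,\O}.
$$
Combining the last two displays and cancelling one factor of $\|\be\|_{0,\O}$ yields the claimed estimate.

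I expect the main obstacle to lie in the first step: recognizing that the dual problem is again a copy of the well-posed source problem (which hinges on the symmetry of $\mathcal{A}$) and, crucially, that its solution inherits the full $\H^{1+s}$ regularity from Theorem \ref{th:regularity}. The secondary technical point is tracking the $\mu_s$-dependence when bounding the cross terms of $b(\cdot,\cdot)$ (the $\div\bu\,q$ and $\boldsymbol{\theta}\cdot\curl\bu$ couplings) by the weighted triple norm; since the constant is allowed to depend on $\mu_s$, this dependence is harmless and is simply absorbed into $C_{\mu_s}$.
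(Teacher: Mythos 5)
Your proposal runs the same Aubin--Nitsche duality argument as the paper: a dual source problem (which by the symmetry of $\mathcal{A}$ is again \eqref{eq:source} with datum $-(\widetilde{\bu}-\widetilde{\bu}_h)$), the extra regularity of Theorem \ref{th:regularity}, Galerkin orthogonality to subtract interpolants, Cauchy--Schwarz plus interpolation estimates, and cancellation of one factor of $\|\widetilde{\bu}-\widetilde{\bu}_h\|_{0,\O}$. The structure is correct and matches the paper's proof step for step.

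There is, however, one concrete gap: the claim that $\mathcal{A}$ is bounded with respect to the triple norm ``with a constant that may depend on $\mu_s$.'' The triple norm of Lemma \ref{lmm:elliptic} weights the \emph{full} pressure only by $(2\mu_s+\lambda_s)^{-1/2}$; the stronger weight $\mu_s^{-1/2}$ applies only to the zero-mean part $q_0$. Consequently, bounding the coupling $\int_{\O}\div(\widetilde{\bu}-\widetilde{\bu}_h)(\psi-\psi_h)$ (and symmetrically $\int_{\O}\div(\boldsymbol{\phi}-\boldsymbol{\phi}_h)(\widetilde{p}-\widetilde{p}_h)$) by plain Cauchy--Schwarz yields a constant of order $\sqrt{(2\mu_s+\lambda_s)/\mu_s}$, which is not a function of $\mu_s$ alone and blows up as $\lambda_s\to\infty$ --- precisely the nearly incompressible regime this mixed formulation is designed to handle. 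The missing ingredient is the paper's step \eqref{eq:int_zeros}: since all the displacement fields involved ($\widetilde{\bu}$, $\widetilde{\bu}_h$, and the dual solution and its interpolant) belong to $\H_0^1(\O)^n$, their divergences have zero mean, so the constant part $P_m$ of each pressure error integrates to zero against them; only the zero-mean parts survive, and these carry the $\mu_s^{-1/2}$ weight in the triple norm. With that reduction your boundedness claim holds with a genuine $C_{\mu_s}$ and the proof closes; without it, the argument only delivers a $\lambda_s$-dependent constant, which defeats the robustness the estimate is meant to provide.
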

\begin{proof}
Let us consider the following well posed problem, with source term $\widetilde{\bu}-\widetilde{\bu}_h$: find $(\boldsymbol{z},\boldsymbol{\xi},\phi)\in \H_0^1(\O)^n\times\L^2(\O)^{n(n-1)/2}\times\L^2(\O)$ such that 
$$	\left\{
	\begin{array}{rcll}
a((\boldsymbol{\theta},q),(\boldsymbol{\xi},\phi))+b((\boldsymbol{\theta},q),\boldsymbol{z})&=&0&\forall (\boldsymbol{\theta},q)\in \L^2(\O)^{n(n-1)/2}\times \L^2(\O),\\
b((\boldsymbol{\xi},\phi),\bv)&=& (\widetilde{\bu}-\widetilde{\bu}_h,\bv)&\forall\bv\in\H^1(\O)^n,
\end{array}
	\right.
$$
where the solution satisfies 
$$
\|\boldsymbol{z}\|_{1+s,\O}+\|\boldsymbol{\xi}\|_{s,\O}+\|\phi\|_{s,\O}\leq C\|\widetilde{\bu}-\widetilde{\bu}_h\|_{0,\O},
$$
with $C$ being  a generic positive constant. On the other hand, we have
\begin{multline}
\label{eq:cuenta}
\|\widetilde{\bu}-\widetilde{\bu}_h\|_{0,\O}^2=b((\boldsymbol{\xi},\phi),\widetilde{\bu}-\widetilde{\bu}_h)=b((\boldsymbol{\xi}-\boldsymbol{\xi}_h,\phi-\phi_h),\widetilde{\bu}-\widetilde{\bu}_h)+b((\boldsymbol{\xi}_h,\phi_h),\widetilde{\bu}-\widetilde{\bu}_h)\\
=b((\boldsymbol{\xi}-\boldsymbol{\xi}_h,\phi-\phi_h),\widetilde{\bu}-\widetilde{\bu}_h)+a((\boldsymbol{\widetilde{\omega}}-\widetilde{\boldsymbol{\omega}}_h,\widetilde{p}-\widetilde{p}_h),(\boldsymbol{\xi}-\boldsymbol{\xi}_h,\phi-\phi_h))\\
-b((\widetilde{\boldsymbol{\omega}}-\widetilde{\boldsymbol{\omega}}_h,\widetilde{p}-\widetilde{p}_h),\boldsymbol{z}-\boldsymbol{z}_h)\\
=\int_{\Omega}\div(\widetilde{\bu}-\widetilde{\bu}_h)(\phi-\phi_h)-\sqrt{\mu_s}\int_{\O}(\boldsymbol{\xi}-\boldsymbol{\xi}_h)\cdot\curl(\widetilde{\bu}-\widetilde{\bu}_h)-\int_{\O}\div(\boldsymbol{z}-\boldsymbol{z}_h)(\widetilde{p}-\widetilde{p}_h)\\
+\sqrt{\mu_s}\int_{\O}(\widetilde{\boldsymbol{\omega}}-\widetilde{\boldsymbol{\omega}}_h)\cdot\curl(\boldsymbol{z}-\boldsymbol{z}_h)+\int_{\O}(\widetilde{\boldsymbol{\omega}}-\widetilde{\boldsymbol{\omega}}_h)\cdot(\boldsymbol{\xi}-\boldsymbol{\xi}_h)+(2\mu_s+\lambda_s)^{-1}\int_{\O}(\widetilde{p}-\widetilde{p}_h)(\phi-\phi_h).
\end{multline}
At this point, we recall the decompositions $\widetilde{p}=\widetilde{p}_0+P_m\widetilde{p}$ and $\widetilde{p}_h=\widetilde{p}_{0,h}+P_m\widetilde{p}_{0,h}$ which, together with an integration by parts,  reveal  that
\begin{equation}
\label{eq:int_zeros}
-\int_{\Omega}\div(\boldsymbol{z}-\boldsymbol{z}_h)(\widetilde{p}-\widetilde{p}_h)=-\int_{\Omega}\div(\boldsymbol{z}-\boldsymbol{z}_h)(\widetilde{p}_0-\widetilde{p}_{0,h}).
\end{equation} 
Hence, replacing \eqref{eq:int_zeros} in \eqref{eq:cuenta}, applying Cauchy-Schwarz inequality, approximation errors, multipliying and dividing by $\sqrt{\mu_s}$,  and using the definition of $\vertiii{\cdot}$,  we obtain
$$
\|\widetilde{\bu}-\widetilde{\bu}_h\|_{0,\O}^2\leq C_{\mu_s}h^s\vertiii{(\widetilde{\boldsymbol{\omega}}-\widetilde{\boldsymbol{\omega}}_h,\widetilde{p}-\widetilde{p}_h,\widetilde{\bu}-\widetilde{\bu}_h)}\|\widetilde{\bu}-\widetilde{\bu}_h\|_{0,\O},
$$
which concludes the proof.
\end{proof}

With the above result at hand, we are allowed  to  define the following operators:
for any $\bF\in \L^2(\O)^n$ we introduce the   linear and compact operator $\widehat{\bT}$ defined by 
$$
 \widehat{\bT}:\L^2(\Omega)^n\rightarrow\L^2(\O)^n,\,\,\,\bF\mapsto\widehat{\bT}\bF:=\widetilde{\bu}_h,
$$
where the triplet $(\widetilde{\bu},\widetilde{\boldsymbol{\omega}},\widetilde{p})$  is the solution of \eqref{eq:source}.
Also, we introduce  $\widehat{\bT}_h$  as the discrete linear counterpart of $\widehat{\bT}$, defined by
$$
 \widehat{\bT}_h:\L^2(\Omega)^n\rightarrow\mathbf{H}_h,\,\,\,\bF\mapsto\widehat{\bT}_h\bF:=\widetilde{\bu}_h,
$$
where the triplet $(\widetilde{\bu}_h,\widetilde{\boldsymbol{\omega}}_h,\widetilde{p}_h)$  is the solution of \eqref{eq:source_h}.
It is easy to check that the operators $\widehat{\bT}$ and $\widehat{\bT}_h$ are  self-adjoint with respect to the $\L^2(\O)$ inner product.

Therefore, thanks to Corollary \ref{cor:app_TT1} and  Lemma \ref{lmm:norma_L2}, we guarantee the convergence in norm of $\widehat{\bT}_h$ to $\widehat{\bT}$ as $h\rightarrow 0$, i.e.
\begin{equation}
\label{eq:_norml2surce}
\|\widetilde{\bu}-\widetilde{\bu}_h\|_{0,\O}\leq C_{\mu_s}h^s\vertiii{(\widetilde{\bu}-\widetilde{\bu}_h,\widetilde{\boldsymbol{\omega}}-\widetilde{\boldsymbol{\omega}}_h,\widetilde{p}-\widetilde{p}_h)}
\leq 	C_{\mu_s}h^{2s}.
\end{equation}
Since we have the convergence of $\widehat{\bT}_h$ to $\widehat{\bT}$ as $h\rightarrow 0$, the compactness of $\widehat{\bT}$ and using the fact that $\widehat{\bT}$ is selfadjoint with respect to the $\L^2$ product, we conclude, as we already proved in Corollary \ref{cor:app_TT1} for the $\H^1$ norm, that the eigenfunctions converge also in $\L^2$ norm. Then we have proved the  following result.
\begin{lemma}\label{lm:aprioriloworder}
Let $(\kappa_h,\bu_h,\boldsymbol{\omega}_h,p_h)$ be the solution of Problem \ref{problem1_discrete} with $\|\bu_h\|_{0,\O}=1$. There exists a solution $(\kappa,\bu,\boldsymbol{\omega},p)$ of Problem \ref{problem1} with $\|\bu\|_{0,\O}=1$, such that $\kappa_h\to \kappa$. Moreover, there exists $C_{\mu_s}>0$ such that 
\begin{align*}\label{eq:error1}
\|\bu-\bu_h\|_{0,\O}&\leq C_{\mu_s}h^{2s},
\end{align*} 
where $s\in (0,\widehat{s})$  as in Lemma \ref{th:regularity}.  
\end{lemma}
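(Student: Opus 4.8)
The plan is to apply the classical spectral approximation theory for compact self-adjoint operators to the pair $\widehat{\bT},\widehat{\bT}_h:\L^2(\O)^n\to\L^2(\O)^n$, mirroring the argument that produced the $\H^1$-estimate in Corollary \ref{cor:app_TT1} but now in the $\L^2$ setting. First I would record the two structural facts already in hand: both operators are compact and self-adjoint with respect to the $\L^2(\O)$ inner product, and their eigenpairs are precisely those of Problem \ref{problem1} and Problem \ref{problem1_discrete} under the reciprocal relation $\eta=1/\kappa$, $\eta_h=1/\kappa_h$. The quantitative input is the source-problem bound: combining Lemma \ref{lmm:norma_L2} with the triple-norm estimate \eqref{eq:_aux_estimate} gives
$$\|(\widehat{\bT}-\widehat{\bT}_h)\bF\|_{0,\O}\leq C_{\mu_s}h^{2s}\|\bF\|_{1,\O}\qquad\forall\,\bF\in\H^1(\O)^n,$$
which is the quantitative form of \eqref{eq:_norml2surce}.

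The only genuine subtlety is that this bound controls the $\L^2$-defect of $\widehat{\bT}-\widehat{\bT}_h$ by the \emph{stronger} $\H^1$-norm of the data, so it is not yet an operator-norm bound in $\mathcal{L}(\L^2(\O)^n)$. I would resolve this exactly as the spectral theory demands, by restricting the defect to the eigenspace $\mathfrak{E}$ associated with the isolated eigenvalue $\eta=1/\kappa$ of $\widehat{\bT}$. For any eigenfunction $\bu\in\mathfrak{E}$ the regularity of Theorem \ref{th:regularity} gives $\|\bu\|_{1,\O}\leq\|\bu\|_{1+s,\O}\leq\widehat{C}\|\bu\|_{0,\O}$, whence
$$\|(\widehat{\bT}-\widehat{\bT}_h)\bu\|_{0,\O}\leq C_{\mu_s}h^{2s}\|\bu\|_{0,\O}\qquad\forall\,\bu\in\mathfrak{E}.$$
Therefore $\|(\widehat{\bT}-\widehat{\bT}_h)|_{\mathfrak{E}}\|_{\mathcal{L}(\L^2)}\leq C_{\mu_s}h^{2s}$, which is precisely the consistency quantity that drives the eigenfunction estimates and, by self-adjointness, coincides with its adjoint counterpart.

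With this at hand, I would invoke the convergence theory of \cite{MR1115235,MR0203473}: since $\widehat{\bT}$ is compact and $\widehat{\bT}_h\to\widehat{\bT}$ in the sense asserted in \eqref{eq:_norml2surce}, the isolated eigenvalue $\eta$ of finite multiplicity is approximated by discrete eigenvalues $\eta_h\to\eta$ (equivalently $\kappa_h\to\kappa$, using that $\eta$ and $\eta_h$ remain bounded away from $0$), and the gap between $\mathfrak{E}$ and the discrete invariant subspace $\mathfrak{E}_h$ is bounded by $\|(\widehat{\bT}-\widehat{\bT}_h)|_{\mathfrak{E}}\|_{\mathcal{L}(\L^2)}$. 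In particular, given the $\L^2$-normalized discrete eigenfunction $\bu_h$ one extracts an $\L^2$-normalized eigenfunction $\bu$ of Problem \ref{problem1} with $\|\bu-\bu_h\|_{0,\O}\leq C_{\mu_s}h^{2s}$, which is the claim. I expect the main obstacle to be exactly the bookkeeping just described, namely passing cleanly from the $\H^1$-data bound to a genuine $\L^2$ bound on the eigenspace and keeping track of the reciprocal eigenvalue relation and of multiplicities, rather than any new estimate, since all the hard approximation work is already contained in Lemma \ref{lmm:norma_L2} and \eqref{eq:_norml2surce}.
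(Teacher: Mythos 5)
Your proposal is correct, and its skeleton is the same as the paper's: introduce the $\L^2$-operators $\widehat{\bT}$, $\widehat{\bT}_h$, use their compactness and $\L^2$-self-adjointness, combine the duality estimate of Lemma \ref{lmm:norma_L2} with the triple-norm a priori bound to reach the rate $h^{2s}$, and then invoke the compact-operator spectral approximation theory of \cite{MR1115235,MR0203473}. The one place where you diverge is instructive. Because you start from \eqref{eq:_aux_estimate}, whose right-hand side carries $\|\bF\|_{1,\O}$, you correctly observe that you only get a defect bound with $\H^1$-data, and you repair this by restricting $\widehat{\bT}-\widehat{\bT}_h$ to the eigenspace $\mathfrak{E}$ and using the eigenfunction regularity $\|\bu\|_{1,\O}\leq\|\bu\|_{1+s,\O}\leq\widehat{C}\|\bu\|_{0,\O}$ of Theorem \ref{th:regularity}, i.e.\ the Babu\v{s}ka--Osborn consistency-on-the-eigenspace route. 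The paper instead asserts genuine operator-norm convergence in $\mathcal{L}(\L^2(\O)^n)$, which is legitimate because Corollary \ref{cor:app_TT1} (via Theorem \ref{th:regularity} and best approximation) already yields $\vertiii{(\widetilde\bu-\widetilde\bu_h,\widetilde{\boldsymbol{\omega}}-\widetilde{\boldsymbol{\omega}}_h,\widetilde p-\widetilde p_h)}\leq C_{\mu_s}h^s\|\bF\|_{0,\O}$ with the \emph{weaker} $\L^2$-norm of the data; chaining this with Lemma \ref{lmm:norma_L2} gives $\|(\widehat{\bT}-\widehat{\bT}_h)\bF\|_{0,\O}\leq C_{\mu_s}h^{2s}\|\bF\|_{0,\O}$ outright, so no eigenspace restriction is needed. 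Your detour is therefore not necessary, but it is harmless and in fact more careful than the paper's wording of \eqref{eq:_norml2surce}, which omits the data norm; both arguments deliver the stated $\L^2$ eigenfunction estimate.
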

Now, with all the ingredients at hand, we are in a position to establish a reliable and efficient  a posteriori estimator of  low order.
\subsection{A posteriori error analysis}
\label{sec:apost}
The following section is dedicated to the design and analysis of  an a posteriori error estimator for our 
mixed eigenvalue problem.  Let us remark that for simplicity, we consider eigenvalues with simple multiplicity. Also, the numerical analysis of the a posteriori error estimator will be for the lowest order of approximation.

Let  $\boldsymbol{I}_{h}:\H^{1}(\O)^n\rightarrow \mathbf{H}_h$, be the Scott-Zhang  interpolant of $\bu$. The following lemma establishes the local approximation properties of $\boldsymbol{I}_{h}$ (see \cite{MR1011446}).
\begin{lemma}
\label{I:Scott-Zhang}
There exist constants $c_{1}$, $c_{2}>0$, independent of $h$, such that for all $\bv\in \H^{1}(\O)^n$ there holds
$$
\|\bv-\boldsymbol{I}_{h}\bv\|_{0,T}\leq c_{1} h_{T}\|\bv\|_{1,\triangle_{T}}\quad \forall T\in\CT_{h},
$$
and
$$
\|\bv-\boldsymbol{I}_{h}\bv\|_{0,e}\leq c_{2}h_{e}^{1/2}\|\bv\|_{1,\triangle_{e}}\quad \forall e\in \mathcal{E}_h,
$$
where $\triangle_{T}:=\{T'\in\CT_{h}: T' \text { and } T \text{ share an edge}\}$ and $\triangle_{e}:=\cup\{T'\in\CT_{h}: e\in \mathcal{E}(T')\}$.
\end{lemma}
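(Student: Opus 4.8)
The plan is to recover the standard Scott--Zhang estimates of \cite{MR1011446}; I sketch the argument for completeness. First I would recall the defining properties of $\boldsymbol{I}_{h}$: acting componentwise, it is built from a dual basis supported on the facets of $\CT_h$, so that it is locally $\L^2$-stable, i.e. $\|\boldsymbol{I}_{h}\bv\|_{0,T}\leq C\|\bv\|_{0,\triangle_{T}}$ with $C$ depending only on the shape-regularity, and it reproduces vector polynomials of degree $k$ on each patch, namely $\boldsymbol{I}_{h}\boldsymbol{q}=\boldsymbol{q}$ on $T$ for every $\boldsymbol{q}\in\mathbb{P}_k(\triangle_{T})^n$. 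The same construction is $\H^1$-stable, $\|\boldsymbol{I}_{h}\bv\|_{1,T}\leq C\|\bv\|_{1,\triangle_{T}}$, and all the constants here are independent of $h$.

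For the element estimate, I would fix $T$ and insert an arbitrary $\boldsymbol{q}\in\mathbb{P}_k(\triangle_{T})^n$. Using polynomial reproduction and the triangle inequality,
$$
\|\bv-\boldsymbol{I}_{h}\bv\|_{0,T}=\|(\bv-\boldsymbol{q})-\boldsymbol{I}_{h}(\bv-\boldsymbol{q})\|_{0,T}\leq\|\bv-\boldsymbol{q}\|_{0,T}+\|\boldsymbol{I}_{h}(\bv-\boldsymbol{q})\|_{0,T},
$$
and the local $\L^2$-stability bounds the last term by $C\|\bv-\boldsymbol{q}\|_{0,\triangle_{T}}$. Taking the infimum over $\boldsymbol{q}$ and applying the Bramble--Hilbert (Dupont--Scott) lemma on the patch $\triangle_{T}$, after mapping to a reference configuration and using shape-regularity so that $h_{T'}\simeq h_{T}$ for $T'\subset\triangle_{T}$, gives $\inf_{\boldsymbol{q}}\|\bv-\boldsymbol{q}\|_{0,\triangle_{T}}\leq c_{1} h_{T}\|\bv\|_{1,\triangle_{T}}$, which is the first claimed bound.

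For the face/edge estimate I would use a scaled trace inequality: for every $T\in\CT_{h}$ with $e\in\mathcal{E}(T)$ and any $\boldsymbol{w}\in\H^{1}(T)^n$,
$$
\|\boldsymbol{w}\|_{0,e}^{2}\leq C\big(h_{T}^{-1}\|\boldsymbol{w}\|_{0,T}^{2}+h_{T}\|\nabla\boldsymbol{w}\|_{0,T}^{2}\big).
$$
Applying this to $\boldsymbol{w}=\bv-\boldsymbol{I}_{h}\bv$, I would bound $\|\bv-\boldsymbol{I}_{h}\bv\|_{0,T}$ by the element estimate just proved and $\|\nabla(\bv-\boldsymbol{I}_{h}\bv)\|_{0,T}$ by combining $\H^{1}$-stability with polynomial reproduction through the same insert-and-subtract argument, obtaining $\|\nabla(\bv-\boldsymbol{I}_{h}\bv)\|_{0,T}\leq C\|\bv\|_{1,\triangle_{T}}$. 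Since $h_{e}\simeq h_{T}$ by shape-regularity, summing over the finitely many $T\subset\triangle_{e}$ produces $\|\bv-\boldsymbol{I}_{h}\bv\|_{0,e}\leq c_{2} h_{e}^{1/2}\|\bv\|_{1,\triangle_{e}}$.

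The only delicate point is keeping every constant independent of $h$, which hinges on the uniform scaling of the Scott--Zhang dual basis functions and on the shape-regularity of $\{\CT_h\}_{h>0}$, guaranteeing that the patches $\triangle_{T}$ and $\triangle_{e}$ contain a bounded number of elements of comparable size. Once the $\L^{2}$- and $\H^{1}$-stability bounds are available with $h$-independent constants, both estimates follow from the routine Bramble--Hilbert and trace arguments above.
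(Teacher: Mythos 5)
The paper does not actually prove this lemma: it is quoted from Scott--Zhang \cite{MR1011446} with a bare citation, so the only meaningful comparison is with the standard proof in that reference, which is indeed the insert-a-polynomial/Bramble--Hilbert/scaled-trace argument you outline. Your overall architecture is the right one.

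There is, however, one genuinely false ingredient in your sketch: the Scott--Zhang operator is \emph{not} locally $\L^2$-stable. Its nodal functionals are averages $\int_{\sigma_z}\psi_z\,\bv$ over faces/edges $\sigma_z$, i.e.\ they act on \emph{traces} of $\bv$, and traces are not controlled by $\|\bv\|_{0,\triangle_T}$: a boundary-layer sequence $\bv_n$ with fixed trace on $\sigma_z$ but $\|\bv_n\|_{0,\triangle_T}\to 0$ has $\|\boldsymbol{I}_h\bv_n\|_{0,T}$ bounded away from zero, so no constant can make $\|\boldsymbol{I}_h\bv\|_{0,T}\le C\|\bv\|_{0,\triangle_T}$ true (this failure is exactly why $\L^2$-stable quasi-interpolants were developed separately). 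The correct local stability bound, obtained from the scaled dual-basis estimates together with a scaled trace inequality, is
$$
\|\boldsymbol{I}_h\bv\|_{0,T}\le C\bigl(\|\bv\|_{0,\triangle_T}+h_T\,|\bv|_{1,\triangle_T}\bigr),
$$
and your argument survives with this replacement: in the insert-and-subtract step the extra term $h_T|\bv-\boldsymbol{q}|_{1,\triangle_T}$ is of the right order, since the Bramble--Hilbert/Deny--Lions lemma on the patch lets you choose $\boldsymbol{q}$ with $\|\bv-\boldsymbol{q}\|_{0,\triangle_T}+h_T|\bv-\boldsymbol{q}|_{1,\triangle_T}\le C\,h_T|\bv|_{1,\triangle_T}$. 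The $\H^1$-stability you invoke and the trace-inequality step for the face estimate are correct; note only that your bound comes out in terms of $\|\bv\|_{1,\triangle_T}$ for an element $T$ containing $e$, a larger patch than the $\triangle_e$ in the statement --- a looseness in patch bookkeeping that the paper's own formulation shares and that is harmless for how the lemma is used in the a posteriori analysis.
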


\subsection{The local and global error indicators} 
Now we present the local indicator associated to our spectral problem. 
The proposed local indicator  is defined as follows
\begin{multline*}
	\label{eq:est_local}
	\zeta_T^2:=\frac{h_T^2}{\mu_s}\|\kappa_h\bu_h - \nabla p_h- \sqrt{\mu_s}\curl\boldsymbol{\omega}_h\|_{0,T}^2
	+\|\sqrt{\mu_s}\curl\bu_h-\boldsymbol{\omega}_h\|_{0,T}^2\\
	+\frac{\mu_s(2\mu_s+\lambda_s)}{3\mu_s+\lambda_s}\|(2\mu_s+\lambda_s)^{-1}p_h+\div\bu_h\|_{0,T}^2+\sum_{e\in\partial T}\frac{h_e}{\mu_s}\|\boldsymbol{J}_e\|_{0,\partial T}^2,
\end{multline*}
where 
$$
\boldsymbol{J}_e:=\left\{
\begin{array}{rcll}
\dfrac{1}{2}\jump{p_h\n+\sqrt{\mu_s}\boldsymbol{\omega}_h\times\boldsymbol{n}},&e\in\mathcal{E}_h(\O),\\
\boldsymbol{0},&e\in\mathcal{E}_h(\partial\O).
\end{array}
\right.
$$
and hence, the a posteriori error estimator is 
\begin{equation}
\label{eq_est_global}
\zeta:=\left(\sum_{T\in\mathcal{T}_h}\zeta_T^2\right)^{1/2}.
\end{equation} 

Now the study focus in the analysis of $\zeta$, where the aim is to prove that the proposed estimator is equivalent with the approximation error of the eigenfunctions. More precisely, as is customary in a posteriori analysis,  the task is to prove that the estimator $\zeta$ is reliable and efficient. We begin with the reliability bound.
\subsection{Reliability}
The goal of this section is to derive a global reliability bound for our proposed estimator defined in \eqref{eq_est_global}. This is contained in the following result.
\begin{theorem}
\label{thrm:gonzalo1}
There exists a constant $C > 0$ independent of $h$ such that
$$
\vertiii{({\bu}-{\bu}_h,{\boldsymbol{\omega}}-{\boldsymbol{\omega}}_h,{p}-{p}_h)}\leq C \left( \zeta+ \dfrac{1}{\sqrt{\mu_s}}|\kappa-\kappa_h|+\dfrac{|\kappa|}{\sqrt{\mu_s}}\|\bu-\bu_h\|_{0,\O}\right).
$$
\end{theorem}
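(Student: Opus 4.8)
The plan is to combine the global inf--sup condition of Lemma \ref{lmm:elliptic} with a residual representation of the error and the interpolation estimates of Lemma \ref{I:Scott-Zhang}. Writing $\boldsymbol{U}=(\bu,\boldsymbol{\omega},p)$ and $\boldsymbol{U}_h=(\bu_h,\boldsymbol{\omega}_h,p_h)$, I first apply Lemma \ref{lmm:elliptic} to the error $\boldsymbol{U}-\boldsymbol{U}_h$: there is a triple $(\bv,\boldsymbol{\theta},q)$ with $\vertiii{(\bv,\boldsymbol{\theta},q)}\leq C\vertiii{\boldsymbol{U}-\boldsymbol{U}_h}$ and
$$\vertiii{\boldsymbol{U}-\boldsymbol{U}_h}^2\leq C\,\mathcal{A}(\boldsymbol{U}-\boldsymbol{U}_h,(\bv,\boldsymbol{\theta},q)).$$
It then suffices to bound the right-hand side by $C\big(\zeta+\tfrac{1}{\sqrt{\mu_s}}|\kappa-\kappa_h|+\tfrac{|\kappa|}{\sqrt{\mu_s}}\|\bu-\bu_h\|_{0,\O}\big)\vertiii{(\bv,\boldsymbol{\theta},q)}$ and divide by $\vertiii{\boldsymbol{U}-\boldsymbol{U}_h}$.

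Next I convert $\mathcal{A}(\boldsymbol{U}-\boldsymbol{U}_h,(\bv,\boldsymbol{\theta},q))$ into a computable residual. Using the continuous problem \eqref{eq:formA}, $\mathcal{A}(\boldsymbol{U},(\bv,\boldsymbol{\theta},q))=-\kappa(\bu,\bv)_{0,\O}$, so that $\mathcal{A}(\boldsymbol{U}-\boldsymbol{U}_h,(\bv,\boldsymbol{\theta},q))=-\kappa(\bu,\bv)_{0,\O}-\mathcal{A}(\boldsymbol{U}_h,(\bv,\boldsymbol{\theta},q))$. I then subtract the discrete problem tested against $(\boldsymbol{I}_h\bv,\boldsymbol{\theta}_h,q_h)$, with $\boldsymbol{\theta}_h\in\mathbf{Z}_h$ and $q_h\in Q_h$ to be chosen, which yields the Galerkin-type relation $\mathcal{A}(\boldsymbol{U}_h,(\boldsymbol{I}_h\bv,\boldsymbol{\theta}_h,q_h))=-\kappa_h(\bu_h,\boldsymbol{I}_h\bv)_{0,\O}$. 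Rearranging and writing $-\kappa(\bu,\bv)+\kappa_h(\bu_h,\bv)=-\kappa(\bu-\bu_h,\bv)-(\kappa-\kappa_h)(\bu_h,\bv)$ isolates the two eigenvalue correction terms, while the remaining contribution is $-\kappa_h(\bu_h,\bv-\boldsymbol{I}_h\bv)_{0,\O}-\mathcal{A}(\boldsymbol{U}_h,(\bv-\boldsymbol{I}_h\bv,\boldsymbol{\theta}-\boldsymbol{\theta}_h,q-q_h))$. Expanding $a(\cdot,\cdot)$ and $b(\cdot,\cdot)$ and integrating by parts element by element on the displacement test function (using $\bu_h=\bv=\boldsymbol{0}$ on $\partial\O$) produces the interior volume residual $\kappa_h\bu_h-\nabla p_h-\sqrt{\mu_s}\curl\boldsymbol{\omega}_h$ paired with $\bv-\boldsymbol{I}_h\bv$, the edge jumps $\jump{p_h\n+\sqrt{\mu_s}\boldsymbol{\omega}_h\times\n}$ collected on interior faces (boundary faces drop out since $\bv\in\H_0^1(\O)^n$), together with the two $\L^2$ residuals $\boldsymbol{\omega}_h-\sqrt{\mu_s}\curl\bu_h$ and $(2\mu_s+\lambda_s)^{-1}p_h+\div\bu_h$ tested against $\boldsymbol{\theta}-\boldsymbol{\theta}_h$ and $q-q_h$.

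It remains to bound every piece. For the volume and jump terms I use Cauchy--Schwarz and the Scott--Zhang estimates of Lemma \ref{I:Scott-Zhang}, $\|\bv-\boldsymbol{I}_h\bv\|_{0,T}\leq c_1 h_T\|\bv\|_{1,\triangle_T}$ and $\|\bv-\boldsymbol{I}_h\bv\|_{0,e}\leq c_2 h_e^{1/2}\|\bv\|_{1,\triangle_e}$, together with the finite overlap of the patches, to recover exactly the weights $h_T^2/\mu_s$ and $h_e/\mu_s$ appearing in $\zeta_T$; the factor $\|\bv\|_{1,\O}$ is then absorbed using the identity $\|\nabla\bv\|_{0,\O}^2=\|\curl\bv\|_{0,\O}^2+\|\div\bv\|_{0,\O}^2$ valid for $\bv\in\H_0^1(\O)^n$ and Poincar\'e's inequality, which give $\|\bv\|_{1,\O}\leq C\mu_s^{-1/2}\vertiii{(\bv,\boldsymbol{\theta},q)}$. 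For the two $\L^2$ residuals I choose $\boldsymbol{\theta}_h$ and $q_h$ (e.g. the $\L^2$ projections, or zero) and bound directly by Cauchy--Schwarz against $\|\boldsymbol{\theta}\|_{0,\O}$ and $\|q\|_{0,\O}$, both controlled by $\vertiii{(\bv,\boldsymbol{\theta},q)}$; for the incompressibility residual the decomposition $q=q_0+P_mq$ must be used so that the two contributions are weighted by $\mu_s^{-1}\|q_0\|_{0,\O}^2$ and $(2\mu_s+\lambda_s)^{-1}\|q\|_{0,\O}^2$, reproducing the harmonic-type coefficient $\mu_s(2\mu_s+\lambda_s)/(3\mu_s+\lambda_s)=(\mu_s^{-1}+(2\mu_s+\lambda_s)^{-1})^{-1}$ of $\zeta_T$. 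Finally the two eigenvalue terms are estimated by $|\kappa-\kappa_h|\,\|\bu_h\|_{0,\O}\|\bv\|_{0,\O}$ and $|\kappa|\,\|\bu-\bu_h\|_{0,\O}\|\bv\|_{0,\O}$, and using $\|\bv\|_{0,\O}\leq\|\bv\|_{1,\O}\leq C\mu_s^{-1/2}\vertiii{(\bv,\boldsymbol{\theta},q)}$ and $\|\bu_h\|_{0,\O}\leq C$ (by the normalization) they give precisely $\tfrac{1}{\sqrt{\mu_s}}|\kappa-\kappa_h|$ and $\tfrac{|\kappa|}{\sqrt{\mu_s}}\|\bu-\bu_h\|_{0,\O}$. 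Collecting all bounds and cancelling one factor of $\vertiii{(\bv,\boldsymbol{\theta},q)}$ proves the claim.

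I expect the delicate point to be the $\lambda_s$-robustness of the pressure/incompressibility term: a naive Cauchy--Schwarz bounding $\|q\|_{0,\O}$ by $(2\mu_s+\lambda_s)^{1/2}\vertiii{(\bv,\boldsymbol{\theta},q)}$ is not sharp as $\lambda_s\to\infty$, so the mean/zero-mean splitting of the test pressure---mirroring the one used in Lemma \ref{lmm:elliptic} and in the definition of $\vertiii{\cdot}$---is essential to land on the correct weight $\mu_s(2\mu_s+\lambda_s)/(3\mu_s+\lambda_s)$. Keeping careful track of the $\mu_s$ and $(2\mu_s+\lambda_s)$ powers through the integration by parts and the interpolation estimates is the only real bookkeeping difficulty; the rest is a routine residual computation.
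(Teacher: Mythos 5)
Your proposal is correct and follows essentially the same route as the paper's proof: apply Lemma \ref{lmm:elliptic} to the error, split the test function using its Scott--Zhang interpolant (Lemma \ref{I:Scott-Zhang}), invoke the continuous and discrete equations to obtain the residual representation with the two eigenvalue-correction terms, integrate by parts elementwise to produce the volume residual and the interior jumps $\jump{p_h\n+\sqrt{\mu_s}\boldsymbol{\omega}_h\times\n}$, and close with Cauchy--Schwarz, the interpolation estimates, and $\|\bv\|_{1,\O}\leq C\mu_s^{-1/2}\vertiii{(\bv,\boldsymbol{\theta},q)}$. The only cosmetic difference is that the paper takes the discrete rotation/pressure test components to be $(\0,0)$ rather than general projections, and your explicit treatment of the zero-mean pressure splitting to obtain the $\lambda_s$-robust weight $\mu_s(2\mu_s+\lambda_s)/(3\mu_s+\lambda_s)$ is actually more detailed than the paper's terse final estimate.
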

\begin{proof}
Since $(\kappa,\bu,\boldsymbol{\omega},p)\in \H_0^1(\Omega)^n\times \L^2(\Omega)^{n(n-1)/2}\times \L^2(\Omega)$ is the  solution of Problem \ref{problem1} and $(\kappa_h,\bu_h,\boldsymbol{\omega}_h,p_h)\in \mathbf{H}_h\times\mathbf{Z}_h\times Q_h$ is the solution of Problem \ref{problem1_discrete}, we have:
$$\mathcal{A}(({\bu}-{\bu}_h,{\boldsymbol{\omega}}-{\boldsymbol{\omega}}_h,{p}-{p}_h),(\bv_h,\0,0))=(\kappa_h\bu_h-\kappa\bu,\bv_h),$$
where $\bv_h$ is the Scott-Zhang  interpolant of $\bv$.
Let us define the error by $\boldsymbol{e}:=({\bu}-{\bu}_h,{\boldsymbol{\omega}}-{\boldsymbol{\omega}}_h,{p}-{p}_h)\in \H_0^1(\Omega)^n\times \L^2(\Omega)^{n(n-1)/2}\times \L^2(\Omega)$ and let $\vertiii{(\bv-\bv_h,\boldsymbol{\theta},q)}\leq C\vertiii{\boldsymbol{e}}$. Then, by Lemma \ref{lmm:elliptic} and the previous estimate, it follows that  
 \begin{multline*}
C\vertiii{\boldsymbol{e}}^2\leq  \mathcal{A}(\boldsymbol{e},(\bv,\boldsymbol{\theta},q))
=\mathcal{A}(\boldsymbol{e},(\bv-\bv_h,\boldsymbol{\theta},q))+\mathcal{A}(\boldsymbol{e},(\bv_h,\0,0))\\
=\kappa_h(\bu_h,\bv_h)-\kappa(\bu,\bv)-\mathcal{A}((\bu_h,\boldsymbol{\omega}_h,p_h,(\bv-\bv_h,\boldsymbol{\theta},q))\\
=\kappa_h(\bu_h,\bv_h-\bv)+(\kappa_h-\kappa)(\bu_h,\bv)+\kappa(\bu_h-\bu,\bv_h)-\mathcal{A}((\bu_h,\boldsymbol{\omega}_h,p_h,(\bv-\bv_h,\boldsymbol{\theta},q))\\
= (\kappa_h-\kappa)(\bu_h,\bv)+\kappa(\bu_h-\bu,\bv_h)+\int_{\O}\left(\sqrt{\mu_s}\curl \bu_h-\boldsymbol{\omega}_h\right)\cdot\boldsymbol{\theta}\\
-\int_{\O}\left((2\mu_s+\lambda_s)^{-1}p_h+\div\bu_h\right)q +\sum_{T\in\mathcal{T}_h}\left(\kappa_h\int_T\bu_h\cdot(\bv_h-\bv)-\int_T\div(\bv-\bv_h)p_h\right.\\
\left.\int_T\sqrt{\mu_s}\boldsymbol{\omega}_h\cdot\curl(\bv-\bv_h) \right)\\
\leq C\left(\dfrac{1}{\sqrt{\mu_s}}(|\kappa_h-\kappa|+\frac{|\kappa|}{\sqrt{\mu_s}}\|\bu_h-\bu\|_{0,\O})+\zeta\right)\vertiii{(\bv,\boldsymbol{\theta},q)}.
\end{multline*}
The proof is concluded  using the fact that  $\vertiii{(\bv-\bv_h,\boldsymbol{\theta},q)}\leq C\vertiii{\boldsymbol{e}}$.
\end{proof}

Now, from Remark \ref{rmrk:ordoble}, Theorem \ref{thrm:gonzalo1},  and in view of Lemma \ref{lm:aprioriloworder}, the terms $|\kappa-\kappa_h|$ and $\|\bu-\bu_h\|_{0,\O}$ are  highe order terms and so, 
we deduce the following reliability result for $\zeta$.
\begin{corollary}
There exists a constant $C_{1,\mu_s} > 0$ independent of $h$ such that, for all $h < h_0$, there holds
$$
\vertiii{({\bu}-{\bu}_h,{\boldsymbol{\omega}}-{\boldsymbol{\omega}}_h,{p}-{p}_h)}\leq C_{1,\mu_s}  (\zeta+h^{2s}) .
$$
Moreover, there exists a constant $C_{2,\mu_s}>$ such that
$$
|\kappa-\kappa_h|\leq C_{2,\mu_s} (\zeta^2+h^{4s}).
$$

\end{corollary}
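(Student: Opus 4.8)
The plan is to start from the reliability bound of Theorem~\ref{thrm:gonzalo1}, namely
$$
\vertiii{({\bu}-{\bu}_h,{\boldsymbol{\omega}}-{\boldsymbol{\omega}}_h,{p}-{p}_h)}\leq C \left( \zeta+ \dfrac{1}{\sqrt{\mu_s}}|\kappa-\kappa_h|+\dfrac{|\kappa|}{\sqrt{\mu_s}}\|\bu-\bu_h\|_{0,\O}\right),
$$
and to absorb the two remaining error contributions on the right-hand side by exploiting that they are of higher order. For brevity write $X:=\vertiii{({\bu}-{\bu}_h,{\boldsymbol{\omega}}-{\boldsymbol{\omega}}_h,{p}-{p}_h)}$.

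First I would invoke Remark~\ref{rmrk:ordoble} to replace the eigenvalue error by the quadratic bound $|\kappa-\kappa_h|\leq C_{\mu_s}X^2$, and Lemma~\ref{lm:aprioriloworder} to bound the displacement error in $\L^2$ by $\|\bu-\bu_h\|_{0,\O}\leq C_{\mu_s}h^{2s}$. Substituting both into the reliability estimate produces an inequality of the form
$$
X\leq C\zeta + \frac{C_{\mu_s}}{\sqrt{\mu_s}}X^2 + \frac{C_{\mu_s}|\kappa|}{\sqrt{\mu_s}}h^{2s}.
$$

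The key step, which I expect to be the only genuinely nontrivial point, is a kick-back (absorption) argument for the quadratic term $X^2$. From the a priori estimate of Lemma~\ref{lmmtriple} we know $X\leq (C_{\mu_s}/\texttt{d}_\kappa)\,h^{\min\{s,k\}}\to 0$ as $h\to 0$, so there exists $h_0>0$ such that $\tfrac{C_{\mu_s}}{\sqrt{\mu_s}}X\leq \tfrac12$ for all $h<h_0$. Using this smallness in the displayed inequality, the quadratic term is subtracted from the left-hand side, giving $\tfrac12 X\leq C\zeta + \frac{C_{\mu_s}|\kappa|}{\sqrt{\mu_s}}h^{2s}$, whence $X\leq C_{1,\mu_s}(\zeta+h^{2s})$, which is the first assertion. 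It is precisely here that the a priori convergence of $X$ is indispensable, since without it one could not guarantee that the quadratic contribution acts as a true perturbation to be absorbed.

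Finally, for the eigenvalue bound I would feed the first estimate back into Remark~\ref{rmrk:ordoble}, obtaining
$$
|\kappa-\kappa_h|\leq C_{\mu_s}X^2\leq C_{\mu_s}C_{1,\mu_s}^2(\zeta+h^{2s})^2\leq C_{2,\mu_s}(\zeta^2+h^{4s}),
$$
where the last inequality uses the elementary estimate $(\zeta+h^{2s})^2\leq 2(\zeta^2+h^{4s})$. This completes the proof.
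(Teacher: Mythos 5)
Your proposal is correct and follows essentially the same route as the paper, which deduces the corollary from Theorem~\ref{thrm:gonzalo1} combined with Remark~\ref{rmrk:ordoble} and Lemma~\ref{lm:aprioriloworder}, treating $|\kappa-\kappa_h|$ and $\|\bu-\bu_h\|_{0,\O}$ as higher-order terms. The only difference is that you make explicit (via the kick-back argument justified by Lemma~\ref{lmmtriple}) the absorption step that the paper leaves implicit in the phrase ``higher order terms,'' which is a legitimate and careful way to fill in that detail.
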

\subsection{Efficiency}
In this section we analyze the efficiency for the proposed estimator $\zeta$. To do this task, we introduce some technical preliminaries.
We begin by introducing the bubble functions for two dimensional elements. Given $T\in\mathcal{T}_h$ and $e\in\mathcal{E}(T)$, we let $\psi_T$ and $\psi_e$ be the usual triangle-bubble and edge-bubble functions, respectively (see \cite{MR3059294} for further details about these functions), which satisfy the following properties

\begin{enumerate}
\item $\psi_T\in\mathrm{P}_{\ell}(T)$, with $\ell=3$ for 2D or $\ell=4$ for 3D, $\text{supp}(\psi_T)\subset T$, $\psi_T=0$ on $\partial T$ and $0\leq\psi_T\leq 1$ in $T$;
\item $\psi_e|_T\in\mathrm{P}_{\ell}(T)$,  with $\ell=2$ for 2D or $\ell=3$ for 3D, $\text{supp}(\psi_e)\subset \omega_e:=\cup\{T'\in\mathcal{T}_h\,:\, e\in\mathcal{E}(T')\}$, $\psi_e=0$ on $\partial T\setminus e$ and $0\leq\psi_e\leq 1$ in $\omega_e$.
\end{enumerate}

The following properties, proved in \cite[Lemma 1.3]{MR1284252} for an arbitrary polynomial order of approximation, hold.
\begin{lemma}[Bubble function properties]
\label{lmm:bubble_estimates}
Given $\ell\in\mathbb{N}\cup\{0\}$, and for each $T\in\mathcal{T}_h$ and $e\in\mathcal{E}(T)$, there following estimates  hold
$$
\|\psi_T q\|_{0,T}^2\leq \|q\|_{0,T}^2\leq C \|\psi_T^{1/2} q\|_{0,T}^2\quad\forall q\in\mathrm{P}_{\ell}(T),
$$
$$
\|\psi_e L(p)\|_{0,e}^2\leq \| p\|_{0,e}^2\leq C \|\psi_e^{1/2} p\|_{0,e}^2\quad\forall p\in\mathrm{P}_{\ell}(e),
$$
and 
$$
h_e\|p \|_{0,e}^2\leq C \|\psi_e^{1/2} L(p)\|_{0,T}^2\leq C
 h_e\|p\|_{0,e}^2\quad\forall p\in\mathrm{P}_{\ell}(e),
$$
 where $L$ is the extension operator defined by  $L: \mathcal{C}(e)\rightarrow \mathcal{C}(T)$ with $\mathcal{C}(e)$ and $\mathcal{C}(T)$ being the spaces of continuous functions defined on $e$ and $T$, respectively,  and satisfying $L(p)\in\mathrm{P}_k(T)$ and $L(p)|_e=p$ for all $p\in\mathrm{P}_k(e)$, where the hidden constants depend on $k$ and the shape regularity of the triangulation.
 \end{lemma}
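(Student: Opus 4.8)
The plan is to reduce all three pairs of inequalities to norm equivalences on a fixed reference configuration and then transfer them to an arbitrary element by an affine scaling argument, carefully tracking the powers of $h$ produced by the change of variables. First I would dispose of the two left-hand inequalities in the first and second estimates, which are immediate: since $0\leq\psi_T\leq1$ on $T$ and $0\leq\psi_e\leq1$ on $e$, we have $|\psi_T q|\leq|q|$ pointwise and, using $L(p)|_e=p$, $|\psi_e L(p)|=|\psi_e p|\leq|p|$ on $e$, whence $\|\psi_T q\|_{0,T}\leq\|q\|_{0,T}$ and $\|\psi_e L(p)\|_{0,e}\leq\|p\|_{0,e}$. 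The genuine content of the lemma therefore lies in the lower bounds.

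For the remaining inequalities I would work first on the reference simplex $\widehat{T}$ with reference bubble $\psi_{\widehat T}$, and on the reference edge/face $\widehat{e}$ with bubble $\psi_{\widehat e}$. The key observation is that the maps $q\mapsto\big(\int_{\widehat T}\psi_{\widehat T}\,q^2\big)^{1/2}$ and $p\mapsto\big(\int_{\widehat e}\psi_{\widehat e}\,p^2\big)^{1/2}$ define norms on the finite-dimensional spaces $\mathrm{P}_\ell(\widehat T)$ and $\mathrm{P}_\ell(\widehat e)$: positive definiteness holds because $\psi_{\widehat T}>0$ in the interior of $\widehat T$ (respectively $\psi_{\widehat e}>0$ in the interior of $\widehat e$), so a vanishing weighted integral forces the polynomial to vanish on an open set and hence identically. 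Since all norms on a finite-dimensional space are equivalent, there exist constants depending only on $\ell$ (hence on $k$) such that $\|\widehat q\|_{0,\widehat T}^2\leq C\int_{\widehat T}\psi_{\widehat T}\widehat q^2$ and $\|\widehat p\|_{0,\widehat e}^2\leq C\int_{\widehat e}\psi_{\widehat e}\widehat p^2$. For the third estimate I would use that, composed with the extension $L$, the map $p\mapsto\big(\int_{\widehat T}\psi_{\widehat e}(L\widehat p)^2\big)^{1/2}$ is again a norm on $\mathrm{P}_\ell(\widehat e)$, yielding the two-sided bound $\|\widehat p\|_{0,\widehat e}^2\leq C\int_{\widehat T}\psi_{\widehat e}(L\widehat p)^2\leq C\|\widehat p\|_{0,\widehat e}^2$ on the reference element.

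The final step is to pull these reference inequalities back to a general $T\in\mathcal{T}_h$ through the affine map $F_T:\widehat T\to T$. Under this change of variables both $\|\cdot\|_{0,T}^2$ and $\int_T\psi_T(\cdot)^2$ acquire the same Jacobian factor $|\det(DF_T)|$, so these cancel and the first estimate transfers with a constant independent of $h$ and, by shape regularity, independent of $T$; the same cancellation handles the second estimate on $e$. The third estimate is the only one in which the powers of $h$ do not cancel: the surface measure on $e$ scales like $h_e^{\,n-1}$ whereas the volume measure on $T$ scales like $h_T^{\,n}$, and shape regularity gives $h_T\simeq h_e$, so the mismatch of exactly one power of $h$ between the $(n-1)$-dimensional and $n$-dimensional integrals produces the factor $h_e$ appearing in $h_e\|p\|_{0,e}^2\leq C\|\psi_e^{1/2}L(p)\|_{0,T}^2\leq Ch_e\|p\|_{0,e}^2$.

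I expect the main obstacle to be the bookkeeping of the scaling exponents in the third estimate, where the dimensional mismatch between the edge/face and the element must be reconciled with the shape-regularity equivalence $h_T\simeq h_e$; once that is settled, the argument is entirely routine. Since the statement coincides with \cite[Lemma 1.3]{MR1284252}, one may alternatively simply invoke that reference, but the scaling proof sketched above reproduces it and makes explicit the dependence of the hidden constants on the polynomial degree $k$ and on the shape regularity of the triangulation.
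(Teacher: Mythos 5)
Your argument is correct and is precisely the standard proof of this lemma: the paper itself offers no proof, merely citing \cite[Lemma 1.3]{MR1284252}, and the norm-equivalence-on-the-reference-element plus affine-scaling argument you give (including the single power of $h_e$ arising from the mismatch between the $(n-1)$-dimensional and $n$-dimensional Jacobians in the third estimate) is exactly how that cited result is established. No gaps; the only implicit assumption worth stating is that $\psi_T$, $\psi_e$ and the extension $L$ on a physical element are defined as push-forwards of their reference-element counterparts, which is the standard construction and makes your cancellation of Jacobian factors legitimate.
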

Also, we requiere the following technical result (see \cite[Theorem 3.2.6]{MR0520174}).

\begin{lemma}[Inverse inequality]\label{inversein}
Let $l,m\in\mathbb{N}\cup\{0\}$ such that $l\leq m$. Then, for each $T\in\mathcal{T}_h$ there holds
$$
|q|_{m,T}\leq C h_T^{l-m}|q|_{l,T}\quad\forall q\in\mathrm{P}_k(T),
$$
where the hidden constant depends on $k,l,m$ and the shape regularity of the triangulations.
\end{lemma}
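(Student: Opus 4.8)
The plan is to reduce the estimate to a fixed reference element by an affine change of variables and then exploit that on the finite-dimensional space $\mathbb{P}_k(\widehat{T})$ all seminorms of interest are comparable. Fix $T\in\mathcal{T}_h$ and let $F_T(\widehat{x})=B_T\widehat{x}+b_T$ be the affine bijection mapping a reference simplex $\widehat{T}$ onto $T$. For $q\in\mathbb{P}_k(T)$ set $\widehat{q}:=q\circ F_T\in\mathbb{P}_k(\widehat{T})$. First I would record the classical transformation bounds for Sobolev seminorms under $F_T$, namely
$$|q|_{m,T}\leq C\,\|B_T^{-1}\|^{m}\,|\det B_T|^{1/2}\,|\widehat{q}|_{m,\widehat{T}},\qquad |\widehat{q}|_{l,\widehat{T}}\leq C\,\|B_T\|^{l}\,|\det B_T|^{-1/2}\,|q|_{l,T},$$
which follow from the chain rule (each differentiation produces a factor $B_T$ or $B_T^{-1}$) together with the change-of-variables identity $dx=|\det B_T|\,d\widehat{x}$ in the underlying $\L^2$ integrals.

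The central step is a purely reference-element inequality: there is a constant $\widehat{C}=\widehat{C}(k,l,m,\widehat{T})$ such that
$$|\widehat{q}|_{m,\widehat{T}}\leq \widehat{C}\,|\widehat{q}|_{l,\widehat{T}}\qquad\forall\,\widehat{q}\in\mathbb{P}_k(\widehat{T}).$$
This is precisely where the hypothesis $l\leq m$ enters, and it is the part I expect to require the most care. I would argue on the finite-dimensional space $V:=\mathbb{P}_k(\widehat{T})$: the seminorm $|\cdot|_{l,\widehat{T}}$ vanishes exactly on $W:=\mathbb{P}_{l-1}(\widehat{T})\cap V$ (with the convention $\mathbb{P}_{-1}:=\{0\}$), and because $m\geq l$ the seminorm $|\cdot|_{m,\widehat{T}}$ also vanishes on $W$. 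Hence both seminorms descend to the finite-dimensional quotient $V/W$, on which $|\cdot|_{l,\widehat{T}}$ is a genuine norm; equivalence of norms on a finite-dimensional space then yields the displayed bound. When $|\widehat{q}|_{l,\widehat{T}}=0$ the inequality is trivial, since then $\widehat{q}\in W$ and $|\widehat{q}|_{m,\widehat{T}}=0$ as well.

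Finally I would chain the three inequalities; the Jacobian factors $|\det B_T|^{1/2}$ and $|\det B_T|^{-1/2}$ cancel, giving
$$|q|_{m,T}\leq \widehat{C}\,C\,\|B_T^{-1}\|^{m}\,\|B_T\|^{l}\,|q|_{l,T}.$$
The proof then closes by invoking the standard affine-equivalence estimates $\|B_T\|\leq h_T/\widehat{\rho}$ and $\|B_T^{-1}\|\leq \widehat{h}/\rho_T$, where $\rho_T$ is the inradius of $T$ and $\widehat{h},\widehat{\rho}$ are fixed geometric constants of $\widehat{T}$; shape regularity of $\{\mathcal{T}_h\}_{h>0}$ gives $h_T/\rho_T\leq\sigma$ uniformly, whence $\|B_T^{-1}\|^{m}\|B_T\|^{l}\leq C\,h_T^{-m}h_T^{l}=C\,h_T^{l-m}$, which is exactly the claimed factor. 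The only genuine subtlety beyond bookkeeping is the reference-element seminorm equivalence above; everything else is routine scaling machinery, and since the statement is quoted verbatim from \cite[Theorem 3.2.6]{MR0520174} one may alternatively simply cite it.
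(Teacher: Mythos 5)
Your proof is correct: the paper itself gives no argument for this lemma, deferring entirely to the citation \cite[Theorem 3.2.6]{MR0520174}, and your scaling argument (affine pullback to the reference simplex, seminorm equivalence on the finite-dimensional quotient $\mathbb{P}_k(\widehat{T})/\mathbb{P}_{l-1}(\widehat{T})$, then the shape-regularity bounds on $\|B_T\|$ and $\|B_T^{-1}\|$) is precisely the standard proof given in that reference. Nothing is missing --- note only that when $l>k$ both seminorms vanish identically on $\mathbb{P}_k(T)$, a degenerate case your quotient construction already absorbs since then $W=V$.
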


Now our task is to prove the efficiency. In order to simplify the presentation of the material, we define $\mathbf{R}_1:=(\kappa_h\bu_h - \nabla p_h- \sqrt{\mu_s}\curl\boldsymbol{\omega}_h)|_T$ and $\boldsymbol{\chi}|_T:=(\mu_s)^{-1} h_T^2\mathbf{R}_1\psi_T$, where $\psi_T$ is the bubble function defined in Lemma \ref{lmm:bubble_estimates}. Then, we have
\begin{equation}
\label{eq:R1}
(\mu_s)^{-1}h_T^2\|\mathbf{R}_1\|_{0,T}^2\leq C\int_T\mathbf{R}_1\cdot\boldsymbol{\chi}.
\end{equation}
 From the first equation of \eqref{def:lame_system} we have that $\kappa\bu-\sqrt{\mu_s}\curl\boldsymbol{\omega}-\nabla p=0$ in $\O$. Then, using this in \eqref{eq:R1} we have 
\begin{multline}
\label{eq:bound_R1}
C^{-1}(\mu_s)^{-1}h_T^2\|\mathbf{R}_1\|_{0,T}^2\leq \int_T\mathbf{R}_1\cdot\boldsymbol{\chi}=\int_T(\kappa_h\bu_h-\kappa\bu)\cdot\boldsymbol{\chi}\\
+\sqrt{\mu_s}\int_T\curl(\boldsymbol{\omega}-\boldsymbol{\omega}_h)\cdot\boldsymbol{\chi}
+\int_T\nabla(p-p_h)\cdot\boldsymbol{\chi}\\
=C\int_T(\kappa_h\bu_h-\kappa\bu)\cdot\boldsymbol{\chi}
+\sqrt{\mu_s}\int_T(\boldsymbol{\omega}-\boldsymbol{\omega}_h)\cdot\curl\boldsymbol{\chi}
+\int_T(p-p_h)\div\boldsymbol{\chi}\\
\leq ((\mu_s)^{-1/2} h_T\|\kappa_h\bu_h-\kappa\bu\|_{0,T}+\mu_s^{-1/2}\|p-p_h\|_{0,T}\\
+\|\boldsymbol{\omega}-\boldsymbol{\omega}_h\|_{0,T})(\mu_s^{1/2}\|\nabla\boldsymbol{\chi}\|_{0,T}+\mu_s^{1/2} h_T^{-1}\|\boldsymbol{\chi}\|_{0,T}).
\end{multline}
Note that the following estimate holds 
$$
\mu_s^{1/2}(\|\nabla\boldsymbol{\chi}\|_{0,T}+h_T^{-1}\|\boldsymbol{\chi}\|_{0,T})\leq \mu^{-1/2} h_T\|\mathbf{R}_1\|_{0,T},
$$
which we replace in \eqref{eq:bound_R1} in order to obtain
\begin{multline}\label{eq:eficiencia1}
h_T(\mu_s)^{-1/2}\|\mathbf{R}_1\|_{0,T}\leq C( h_T(\mu_s)^{-1/2}\|\kappa_h\bu_h-\kappa\bu\|_{0,T}\\
+(\mu_s)^{-1/2}\|p-p_h\|_{0,T}+\|\boldsymbol{\omega}-\boldsymbol{\omega}_h\|_{0,T}). 
\end{multline}

Now we define the quantities $\mathbf{R}_2:=(\sqrt{\mu_s}\curl\bu_h-\boldsymbol{\omega}_h)|_T$ and $\mathrm{R}_3:=((2\mu_s+\lambda_s)^{-1}p_h+\div\bu_h)|_T$. Then, from  \cite[Lemma 2.2]{MR4461583}  and \cite[Lemma 2.3]{MR4461583}, respectively,  we have
\begin{equation}
\label{eq:eficiencia2}
\|\mathbf{R}_2\|_{0,T}\lesssim \|\boldsymbol{\omega}-\boldsymbol{\omega}_h\|_{0,T}+\sqrt{\mu_s}\|\curl(\bu-\bu_h)\|_{0,T}.
\end{equation}
and 
\begin{equation}
\label{eq:eficiencia3}
(\mu_s^{-1}+(2\mu_s+\lambda_s)^{-1})^{-1/2}\|\mathrm{R}_3\|_{0,T}\lesssim \sqrt{\mu}\|\div(\bu-\bu_h)\|_{0,T}+(2\mu_s+\lambda_s)^{-1/2}\|p-p_h\|_{0,T}.
\end{equation}
Similarly, proceeding as in \cite[lemma 2.4]{MR4461583} we can prove the following property
\begin{align}\nonumber
h_e^{1/2}(\mu_s)^{-1/2}\|\boldsymbol{J}_e\|_{0,e}&\leq C\bigg(\sum_{T\in\omega_{e}}h_T(\mu_s)^{-1/2}\|\mathbf{R}_1\|_{0,T}+h_T(\mu_s)^{-1/2}\|\kappa_h\bu_h-\kappa\bu\|_{0,T}\\\label{eq:eficiencia4}
&+(\mu_s)^{-1/2}\|p-p_h\|_{0,T}+\|\boldsymbol{\omega}-\boldsymbol{\omega}_h\|_{0,T}\bigg).
\end{align}
Finally, gathering \eqref{eq:eficiencia1}, \eqref{eq:eficiencia2}, \eqref{eq:eficiencia3}, and \eqref{eq:eficiencia4}, we have the  following result, which allows us to deduce the efficiency of the error indicator up to higher order terms.
\begin{theorem}[Efficiency]
Let $(\bu,\boldsymbol{\omega},p)$ be the solution to Problem \ref{problem1} and let $(\bu_h,\boldsymbol{\omega}_h,p_h)$ be its finite element approximation, given as the solution to Problem \ref{problem1_discrete}. Then we have
$$
\zeta\leq C\left(\vertiii{(\bu-\bu_h,\boldsymbol{\omega}-\boldsymbol{\omega}_h,p-p_h)}+h^{2s+1}\right),
$$
where $C>0$ and is independent of $h$, $\lambda_s$ and the discrete solution.
\end{theorem}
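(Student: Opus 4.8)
The plan is to prove efficiency by bounding the global estimator $\zeta$ in terms of the triple norm of the error, establishing that each of the four contributions to the local indicator $\zeta_T^2$ is controlled by local error quantities. The proof will consist of gathering the four estimates \eqref{eq:eficiencia1}, \eqref{eq:eficiencia2}, \eqref{eq:eficiencia3}, and \eqref{eq:eficiencia4}, which have already been established in the lead-up, and then summing over all elements $T\in\mathcal{T}_h$ to pass from local to global bounds.

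First I would recall the definition of $\zeta_T^2$ and identify that its four summands correspond precisely to the four residual quantities just bounded: the volumetric equilibrium residual $\mathbf{R}_1$, the rotation-constitutive residual $\mathbf{R}_2$, the incompressibility residual $\mathrm{R}_3$, and the edge/face jump term $\boldsymbol{J}_e$. Using \eqref{eq:eficiencia1} for $\mathbf{R}_1$, \eqref{eq:eficiencia2} for $\mathbf{R}_2$, \eqref{eq:eficiencia3} for $\mathrm{R}_3$, and \eqref{eq:eficiencia4} for $\boldsymbol{J}_e$, each local contribution is dominated by the local pieces of the norm $\vertiii{(\bu-\bu_h,\boldsymbol{\omega}-\boldsymbol{\omega}_h,p-p_h)}$ plus the consistency term $h_T(\mu_s)^{-1/2}\|\kappa_h\bu_h-\kappa\bu\|_{0,T}$. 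The key observation is that the right-hand sides involve $\|\boldsymbol{\omega}-\boldsymbol{\omega}_h\|_{0,T}$, $\sqrt{\mu_s}\|\curl(\bu-\bu_h)\|_{0,T}$, $\sqrt{\mu_s}\|\div(\bu-\bu_h)\|_{0,T}$, and $\|p-p_h\|_{0,T}$, all of which are exactly the local ingredients of the global triple norm.

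Next I would square all local estimates, sum over $T\in\mathcal{T}_h$, and invoke the shape-regularity of $\{\mathcal{T}_h\}_{h>0}$ together with the finite-overlap property of the patches $\omega_e$ to ensure that the summation of the jump contributions \eqref{eq:eficiencia4}, whose right-hand side involves neighboring elements, produces only a bounded multiplicity factor independent of $h$. This yields
$$
\zeta^2\leq C\left(\vertiii{(\bu-\bu_h,\boldsymbol{\omega}-\boldsymbol{\omega}_h,p-p_h)}^2+\sum_{T\in\mathcal{T}_h}\frac{h_T^2}{\mu_s}\|\kappa_h\bu_h-\kappa\bu\|_{0,T}^2\right).
$$
The remaining task is to handle the consistency term. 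I would write $\kappa_h\bu_h-\kappa\bu=\kappa_h(\bu_h-\bu)+(\kappa_h-\kappa)\bu$ and estimate $\|\kappa_h\bu_h-\kappa\bu\|_{0,T}$ by $|\kappa_h|\|\bu-\bu_h\|_{0,T}+|\kappa-\kappa_h|\|\bu\|_{0,T}$. Factoring out $h_T\leq h$ and summing, the consistency term is bounded by $h\,(\|\bu-\bu_h\|_{0,\O}+|\kappa-\kappa_h|)$; then by Lemma \ref{lm:aprioriloworder} and Remark \ref{rmrk:ordoble} both $\|\bu-\bu_h\|_{0,\O}$ and $|\kappa-\kappa_h|$ are of order $h^{2s}$, so this whole term is of order $h^{2s+1}$, which is the higher-order remainder appearing in the statement.

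The main obstacle I anticipate is the bookkeeping around the $\mu_s$-weights and the $\lambda_s$-independence of the constant. The estimates \eqref{eq:eficiencia1}--\eqref{eq:eficiencia4} carry carefully tracked powers of $\mu_s$ designed precisely so that, after squaring and regrouping, the right-hand side reassembles into the weighted triple norm $\vertiii{\cdot}$ with a constant independent of $\lambda_s$; in particular the incompressibility estimate \eqref{eq:eficiencia3} involves the delicate factor $(\mu_s^{-1}+(2\mu_s+\lambda_s)^{-1})^{-1/2}$, which must match the coefficient $\frac{\mu_s(2\mu_s+\lambda_s)}{3\mu_s+\lambda_s}$ in the definition of $\zeta_T^2$. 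Verifying that these weights align correctly so that $\lambda_s\to\infty$ causes no blow-up is the technically delicate point; once the weights are confirmed consistent, the final assembly is routine.
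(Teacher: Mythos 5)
Your proposal is correct and follows essentially the same route as the paper: gather the local bounds \eqref{eq:eficiencia1}--\eqref{eq:eficiencia4} into a global estimate $\zeta\leq C(\vertiii{(\bu-\bu_h,\boldsymbol{\omega}-\boldsymbol{\omega}_h,p-p_h)}+h\,\mu_s^{-1/2}\|\kappa_h\bu_h-\kappa\bu\|_{0,\O})$, then split the consistency term via the triangle inequality and absorb it into $h^{2s+1}$ using the $\L^2$ eigenfunction bound of Lemma \ref{lm:aprioriloworder} and the double-order eigenvalue bound (Remark \ref{rmrk:ordoble}). Your worry about the $\lambda_s$-weights resolves exactly as you suspect, since $(\mu_s^{-1}+(2\mu_s+\lambda_s)^{-1})^{-1}=\mu_s(2\mu_s+\lambda_s)/(3\mu_s+\lambda_s)$ is precisely the coefficient in $\zeta_T^2$.
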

\begin{proof}
From  \eqref{eq:eficiencia1}--\eqref{eq:eficiencia4} we have 
$$
\zeta\leq C(\vertiii{(\bu-\bu_h,\boldsymbol{\omega}-\boldsymbol{\omega}_h,p-p_h)}+h(\mu_s)^{-1/2}\|\kappa_h\bu_h-\kappa\bu\|_{0,\O}).
$$
Observe that, using the fact $\|\bu\|_{0,\O}=1$, we have 
$
\|\kappa\bu-\kappa_h\bu_h\|_{0,\O}^2\leq 2(\kappa^2\|\bu-\bu_h\|_{0,\O}^2+|\kappa-\kappa_h|^2).
$
Hence
$$
\zeta\leq C(\vertiii{(\bu-\bu_h,\boldsymbol{\omega}-\boldsymbol{\omega}_h,p-p_h)}+h^{2s+1}).
$$
\end{proof}
\section{Numerical experiments}
\label{sec:numerics}
In this section we report some numerical tests in order to assess the performance of the proposed mixed finite  element method. Some of the meshes used in this work were constructed using the meshing software Gmsh \cite{geuzaine2009gmsh}. The numerical scheme have been  implemented in a FEniCS script \cite{AlnaesBlechta2015a}, where the results of the  convergence rates associated to the a priori analysis and  the a  posteriori error estimator were obtained. More precisely, the convergence rates of the eigenvalues have been  obtained with a standard least square fitting and highly refined meshes. 

We denote by $\kappa_{h_i}$ the $i$-th lowest computed eigenvalue, whereas $\kappa_{extr}$ or $\kappa_i$ denotes the corresponding extrapolated eigenvalue. Note that the square root of this values gives the corresponding eigenfrequency. We also denote by $N$ the mesh refinement level, whereas $\texttt{dof}$ denotes the number of degrees of freedom.

Hence, we denote the error on the $i$-th eigenfrequency by $\err(\kappa_i)$ with 
$$
\err(\kappa_i):=\vert \sqrt{\kappa_{h_i}}-\sqrt{\kappa_{i}}\vert.
$$
For the computation of order of convergence, in  two-dimensional geometries, we consider polynomial degrees $k=1,2,3$, whereas for three-dimensional cases we consider only the lowest order $k=1$ due to machine memory limitations.

Let us remark that, when the Poisson  ratio is too close to $1/2$, the numerical method loses stability, which is reflected in the convergence order of approximation of the eigenvalues. To remedy this problem, an option is to stabilize the method, adding a term that compensates the presence of $p_{0,h}$ for   the nearly incompresible case (see \cite[Section 3]{hughes1987new}). This stabilization, which we add to Problem \ref{problem1_discrete} is as follows
	\begin{equation}
		\label{eq:problem-modified-bilinear-discrete}
		\mathcal{A}((\bu_h,\boldsymbol{\omega}_h,p_h),(\bv_h,\boldsymbol{\theta}_h,q_h)) + \alpha^{-1}\sum_{e\in\mathcal{E}_h(\O)}h_e\int_e \jump{p}\,\jump{q}=-\kappa_h(\bu_h,\bv_h)_{0,\Omega}.
	\end{equation}
	The value of $\alpha$ is conditioned by the value of $\nu$, so that the stabilization gets relevant when $\nu\approx 0.5$.
	As in Section \ref{sec:conv}, the arguments of the continuous problem allow us to conclude that \eqref{eq:problem-modified-bilinear-discrete} is well-posed, and therefore we have the respective discrete estimates for the source problem.
The stabilization parameter for the limiting case can take different values. In our work, we will consider those values such that the convergence of the experiment can be captured in low and high order discontinuous pressure elements (see, for example, \cite[Remark 3]{hughes1987new}).

\subsection{A priori numerical experiments}
\label{subsec:uniformes}
In this section we present experiments using uniform meshes in order to study the convergence of the method in the compressible and quasi-incompressible cases. The experiments that we report are performed in two and three dimensions, using different polynomial degrees.

\subsubsection{Test 1: Square domain} 
\label{subsec:test-cuadrado}
In this experiment we consider the unit square $\O:=(0,1)^2$ with boundary conditions $\bu=0$ over the entire domain. An example of the used for this experiment is depicted in Figure \ref{fig:malla-cuadrado}. The mesh refinement level on this mesh is such that the number of elements is asymptotically $2N^2$. The Poisson's coefficient is taken as $\nu\in\{0.35,0.49, 0.499,0.4999\}$.

Table \ref{tabla:square_nu035} show the convergence behavior when we consider different values of $\nu$ for the lowest order polynomial approximation. It notes that the convergence order remains optimal when $\nu$ approaches to the incompressibilty limit. Values of $\nu$ closer to $0.5$ than the ones showed here gave similar result. We also considered higher order polynomials for the near incompresible case, whose behavior is depicted in Figure \ref{fig:error-cuadrado} for $\nu=0.4999$. It is clear that $\vert \sqrt{\kappa_h}-\sqrt{\kappa_{extr}}\vert\approx \mathcal{O}(\texttt{dof}^{-k})\approx \mathcal{O}(h^{2k})$. The noise in the curves for $k=3$ are produced by the closeness between the computed eigenvalues and the extrapolated values, together with round-off error.
\begin{figure}[!h]
	\centering
	\includegraphics[scale=0.16,trim= 0 5.5cm 0 5.5cm, clip]{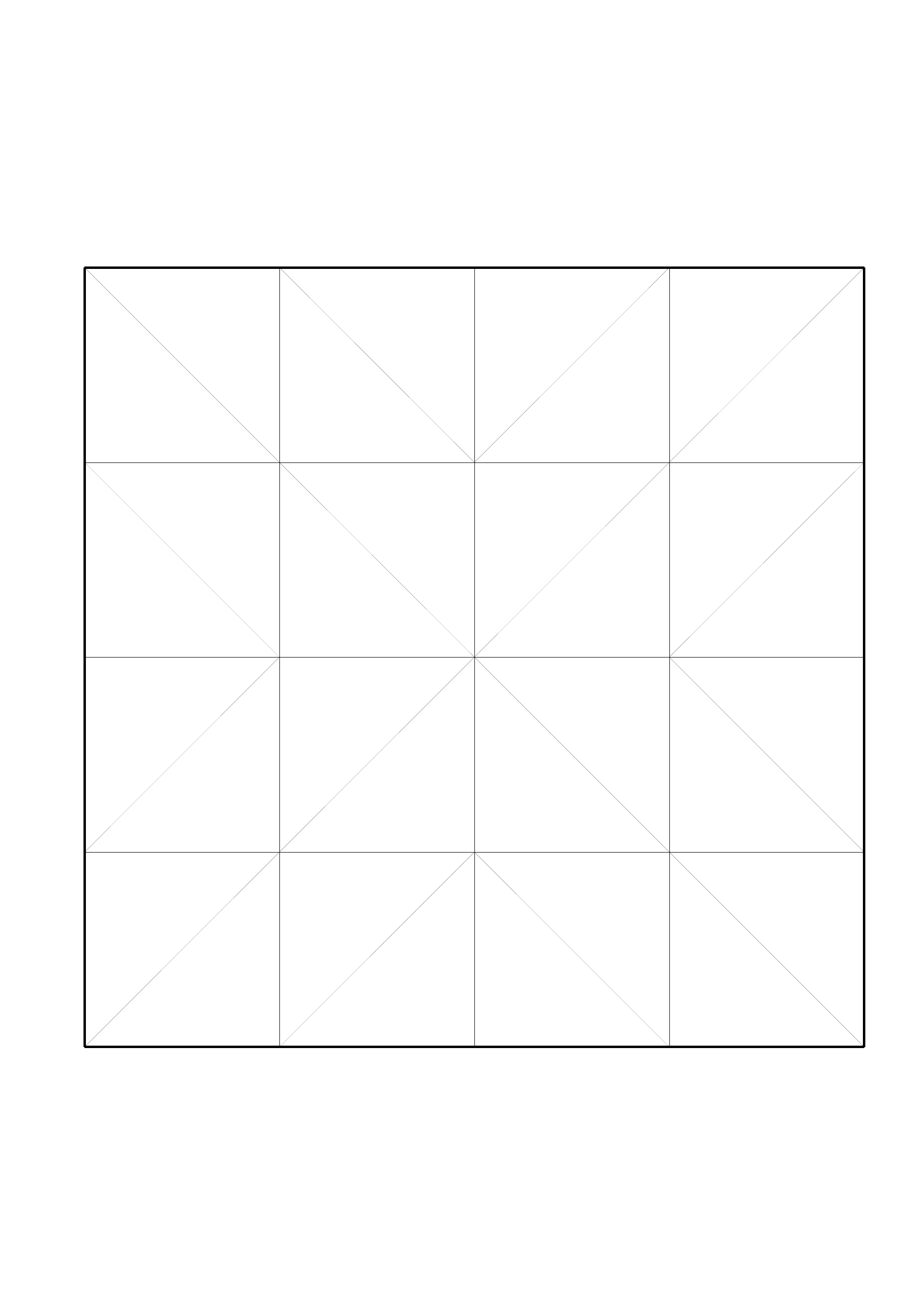}
	\caption{Test 1. Sample mesh of the unit square $\Omega=(0,1)^2$, with $N=4$.}
	\label{fig:malla-cuadrado}
\end{figure}
\begin{table}[h!]
	{\footnotesize
		\begin{center}
			\caption{Test 1. Lowest computed eigenvalues for polynomial degree $k=1$ and different values of $\nu$. }
			\begin{tabular}{c |c c c c |c| c}
				\toprule
				$\nu $        & $N=20$             &  $N=30$         &   $N=40$         & $N=50$ & Order & $\sqrt{\kappa_{extr}}$ \\ 
				\midrule
				& 4.21034 &  4.20058   & 4.19725 &4.19573 & 2.10  & 4.19317   \\
				& 4.21034 &  4.20058   & 4.19725 &4.19573 & 2.11  & 4.19317    \\
				\multirow{2}{0.85cm}{0.35}
				& 4.41054  &  4.38897  & 4.38155 &4.37814 &  2.08 & 4.37235   \\
				& 5.98625  &  5.95624  & 5.94595 &5.94125 &  2.09 & 5.93336   \\
				
				\midrule

				&4.24033&4.21090&4.20083 &4.19627&  2.08&4.18836     \\
				&5.62215&5.56252&5.54232 &5.53317&  2.07&5.51698    \\
				\multirow{2}{0.85cm}{0.49} 
				&5.62215&5.56252&5.54232 &5.53317&  2.07&5.51698   \\
				&6.76422&6.63819&6.59545 &6.57609&  2.08&6.54236    \\

				\midrule
				&4.23323&4.20247&4.19176 &4.18682&  2.03&4.17803     \\
				&5.65631&5.59002&5.56741 &5.55713&  2.07&5.53911   \\
				\multirow{2}{0.85cm}{0.499} 
				&5.65631&5.59002&5.56741 &5.55713&  2.07&5.53911   \\
				&6.77552&6.64170&6.59562 &6.57447&  2.04&6.53699    \\

				\midrule
				
				&4.23261&4.20174&4.19098 &4.18601&  2.01&4.17701     \\
				&5.66107&5.59383&5.57062 &5.55994&  2.03&5.54082    \\
				\multirow{2}{0.85cm}{0.4999} 
				&5.66107&5.59383&5.57062 &5.55994&  2.03&5.54082   \\
				&6.77726&6.64276&6.59637 &6.57502&  2.02&6.53643   \\	
				\bottomrule             
			\end{tabular}
	\end{center}}
	
	\label{tabla:square_nu035}
\end{table}
\begin{figure}[!h]
	\centering
	\begin{minipage}{0.49\linewidth}
		\includegraphics[scale=0.17]{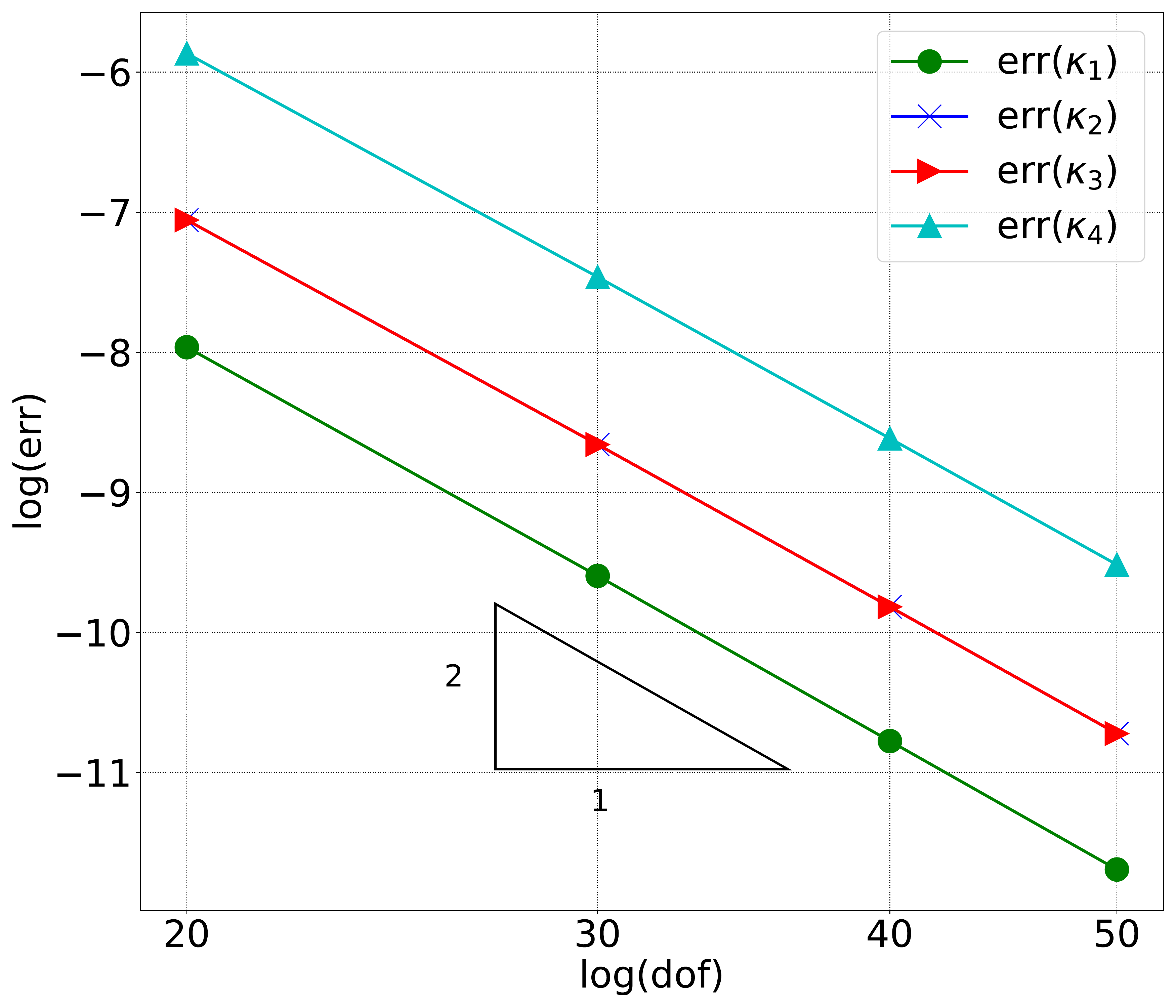}
	\end{minipage}
	\begin{minipage}{0.49\linewidth}
		\includegraphics[scale=0.17]{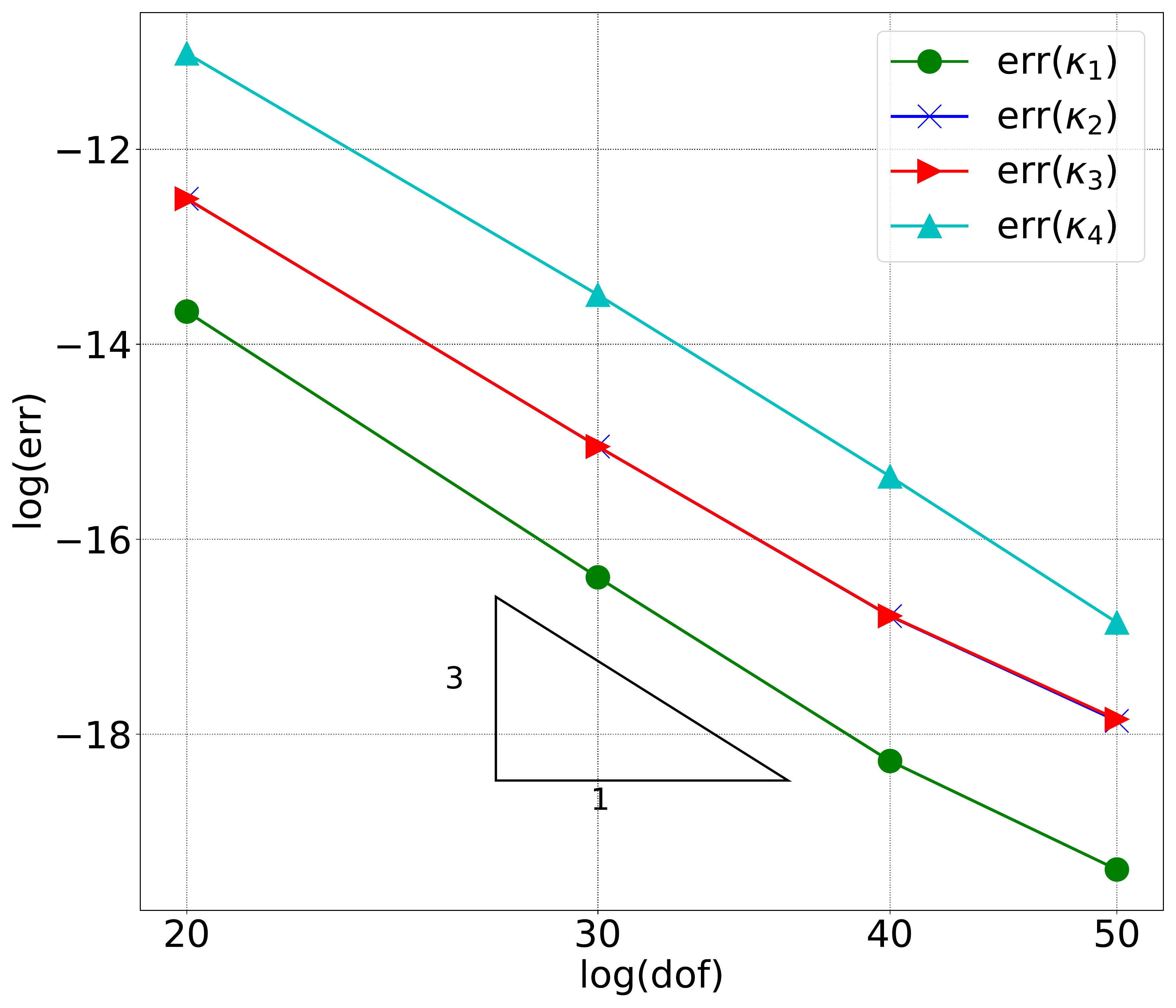}
	\end{minipage}
	\caption{Test 1. Error curves on the incompresibility limit $nu=0.4999$ for $k=2$ (left) and $k=3$ (right).}
	\label{fig:error-cuadrado}
\end{figure}
\subsubsection{Test 2: Torus domain} This test focus on a three dimensional non-polygonal domain. We consider a torus azimuthally symmetric about the $z-$axis, whose domain is characterized by $$\Omega:=\left\{(x,y,z)\in\mathbb{R}^3\;:\;\left(\sqrt{x^2+y^2}-R\right)^2+z^2=r^2\right\},$$ where $R=1/2$ and $r=1/4$. We test the deformation, pressure and rotation with several values of $\nu$, for which the stabilization parameter in the incompresibility case is chosen as $\alpha^{-1}=1/8$. The refinement levels for the mesh are such that the mesh parameter is asymptotically $h=1/N$, where $N=30,40,50,60$.

Table \ref{tabla:torus_tabla} we observe the convergence behavior for the first four lowest eigenfrequencies. Since each slice of the torus is a circle, we are commiting a variational crime when we discretise this domain using polygons. Hence, the convergence rate $\mathcal{O}(h^2)$ will remain for $k>1$. In the case provided in this experiment, the optimal rate is recovered even near the incompressible limit. In Figures \ref{fig:torus-nu035} -- \ref{fig:torus-nu04999} we depict some of the eigenmodes computed in Table \ref{tabla:torus_tabla} with $N=60$.
\begin{table}[h!]
	{\footnotesize
		\begin{center}
			\caption{Test 2. Lowest computed eigenvalues for the lowest order scheme $k=1$ and different values of $\nu$ in the torus domain. }
			\begin{tabular}{c |c c c c |c| c}
				\toprule
				$\nu $        & $N=30$             &  $N=40$         &   $N=50$         & $N=60$ & Order & $\sqrt{\kappa_{extr}}$ \\ 
				\midrule
				& 6.04924 &  5.99800   & 5.97977 &5.97149 & 2.13  & 5.95707   \\
				& 6.43508 &  6.37877   & 6.35886 &6.34987 & 2.16  & 6.33465    \\
				\multirow{2}{0.85cm}{0.35}
				& 6.43601  &  6.37934  & 6.35917 &6.34995 &  2.20 & 6.33538   \\
				& 7.39830  &  7.32671  & 7.30098 &7.28926 &  2.19 & 7.27061   \\
				
				\midrule

				&5.73450&5.69843&5.68588 &5.68013&  2.17&5.67049     \\
				&8.68185&8.60391&8.57378 &8.55986&  1.92&8.53153    \\
				\multirow{2}{0.85cm}{0.49} 
				&8.68229&8.60483&8.57428 &8.56007&  1.87&8.53011   \\
				&8.91109&8.73963&8.67989 &8.65273&  2.17&8.60686    \\

				\midrule				
				&5.71598&5.67978&5.66718 &5.66142&  2.22&5.65241     \\
				&8.91911&8.74350&8.68319 &8.65521&  2.26&8.61376    \\
				\multirow{2}{0.85cm}{0.4999} 
				&8.92069&8.74412&8.68322 &8.65542&  2.27&8.61404   \\
				&8.92674&8.74496&8.68354 &8.65639&  2.28&8.61386   \\	
				\bottomrule             
			\end{tabular}
	\end{center}}	
	\label{tabla:torus_tabla}
\end{table}
\begin{figure}[!h]
	\centering
	\begin{minipage}{0.32\linewidth}
		\centering
		\includegraphics[scale=0.05, trim=40cm 3cm 38cm 10cm, clip]{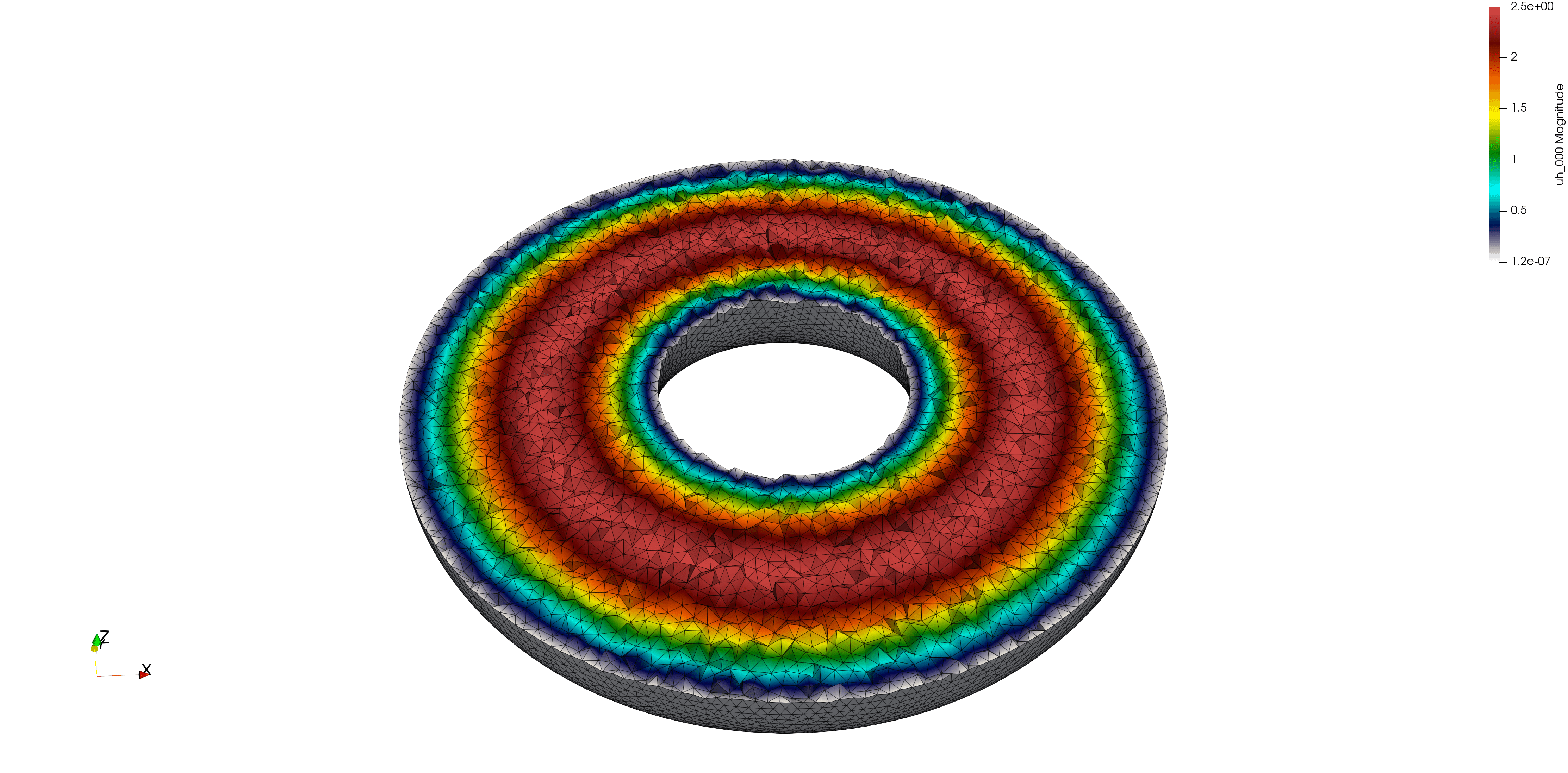}
		{\footnotesize{$|\bu_h|$}}
	\end{minipage}
	\begin{minipage}{0.32\linewidth}
		\centering
		\includegraphics[scale=0.05, trim=40cm 3cm 38cm 10cm, clip]{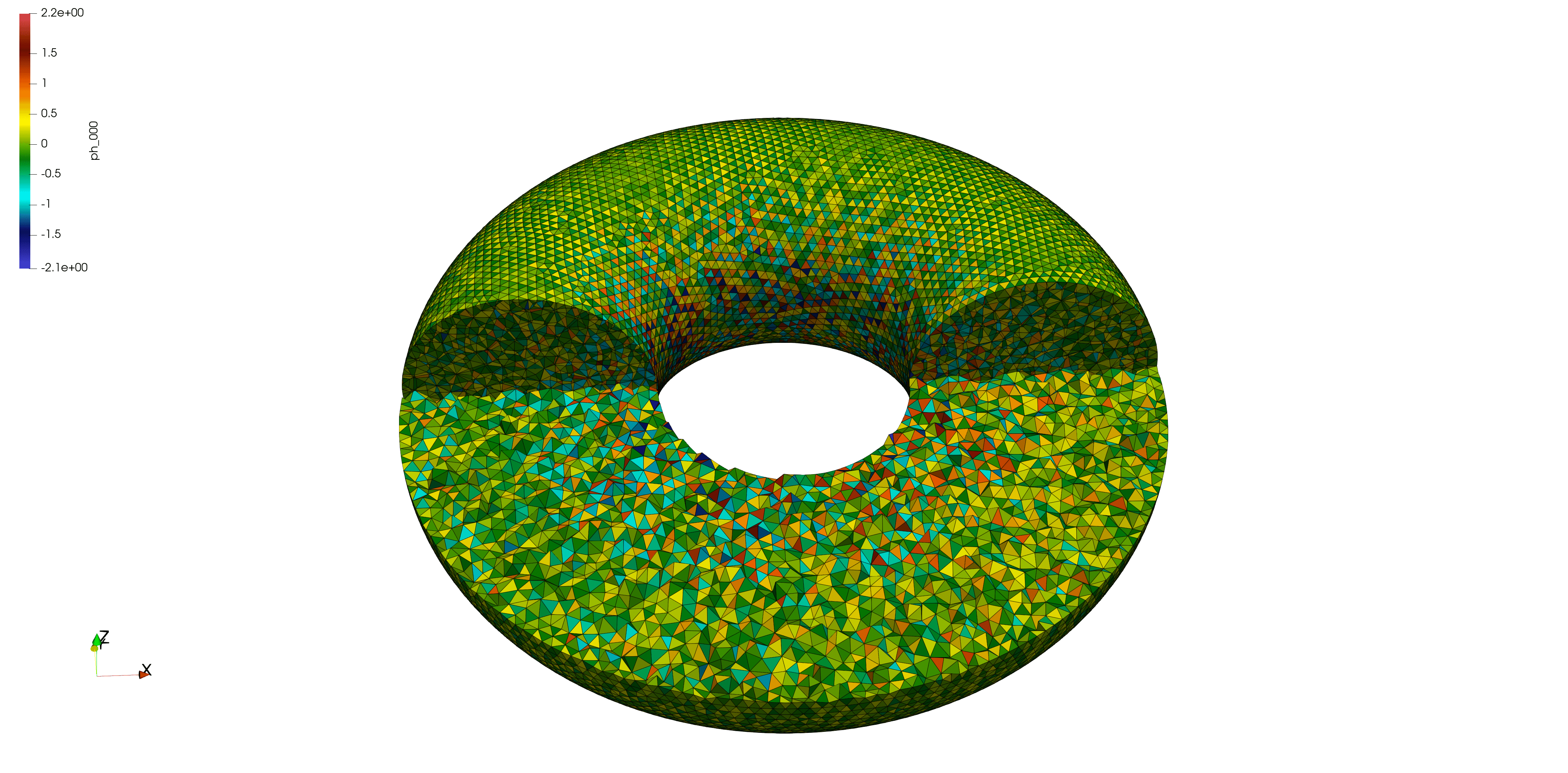}
		{\footnotesize{$p_h$}}
	\end{minipage}
	\begin{minipage}{0.32\linewidth}
		\centering
		\includegraphics[scale=0.05, trim=40cm 3cm 38cm 10cm, clip]{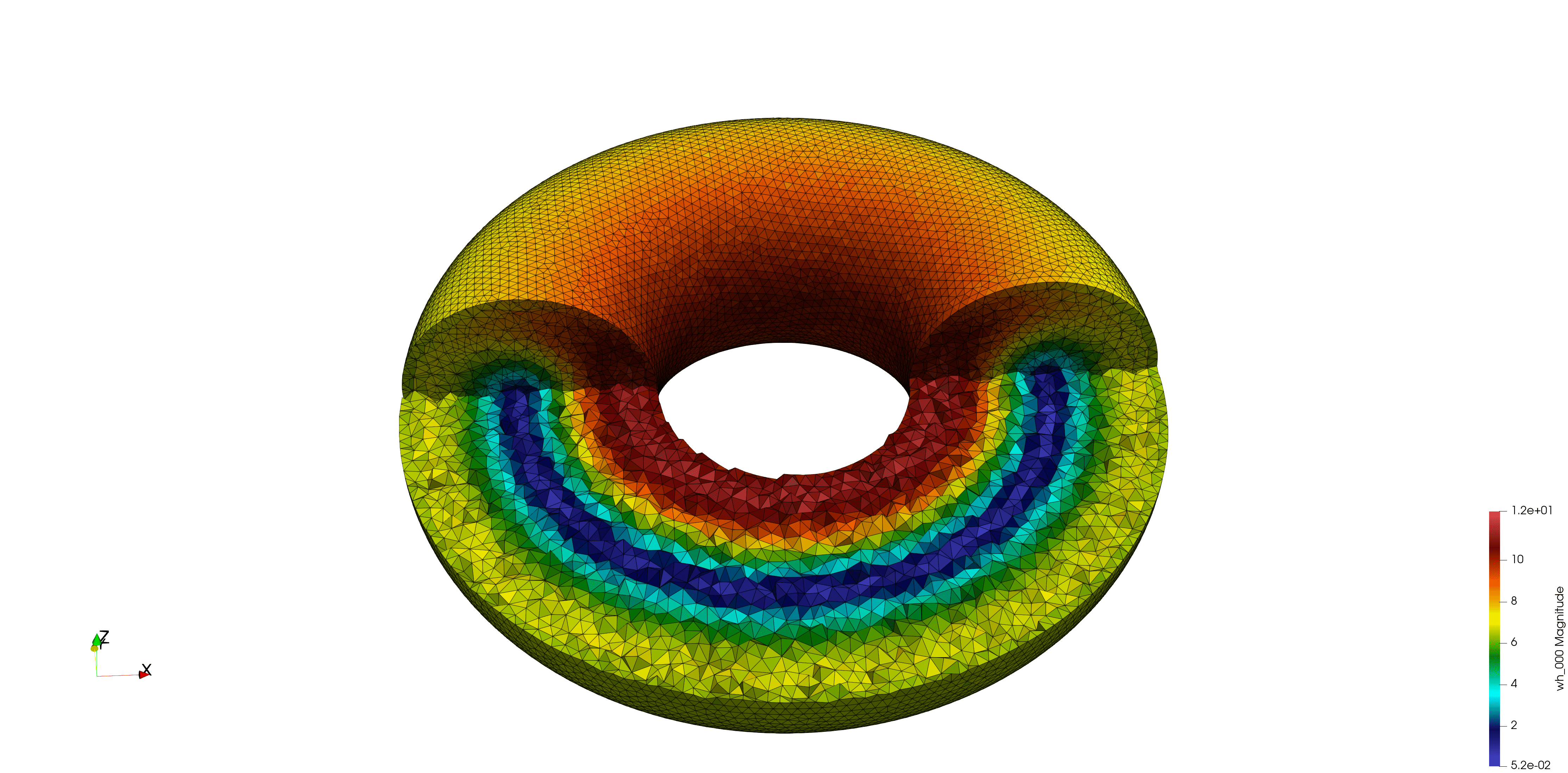}
		{\footnotesize{$|\boldsymbol{\omega}_h|$}}
	\end{minipage}
	\caption{Test 2. Slices of the torus domain showing the eigenmodes for the first lowest frequency, with $\nu=0.35$.}
	\label{fig:torus-nu035}
\end{figure}
\begin{figure}[!h]
	\centering
	\begin{minipage}{0.32\linewidth}
		\centering
		\includegraphics[scale=0.05, trim=40cm 3cm 38cm 10cm, clip]{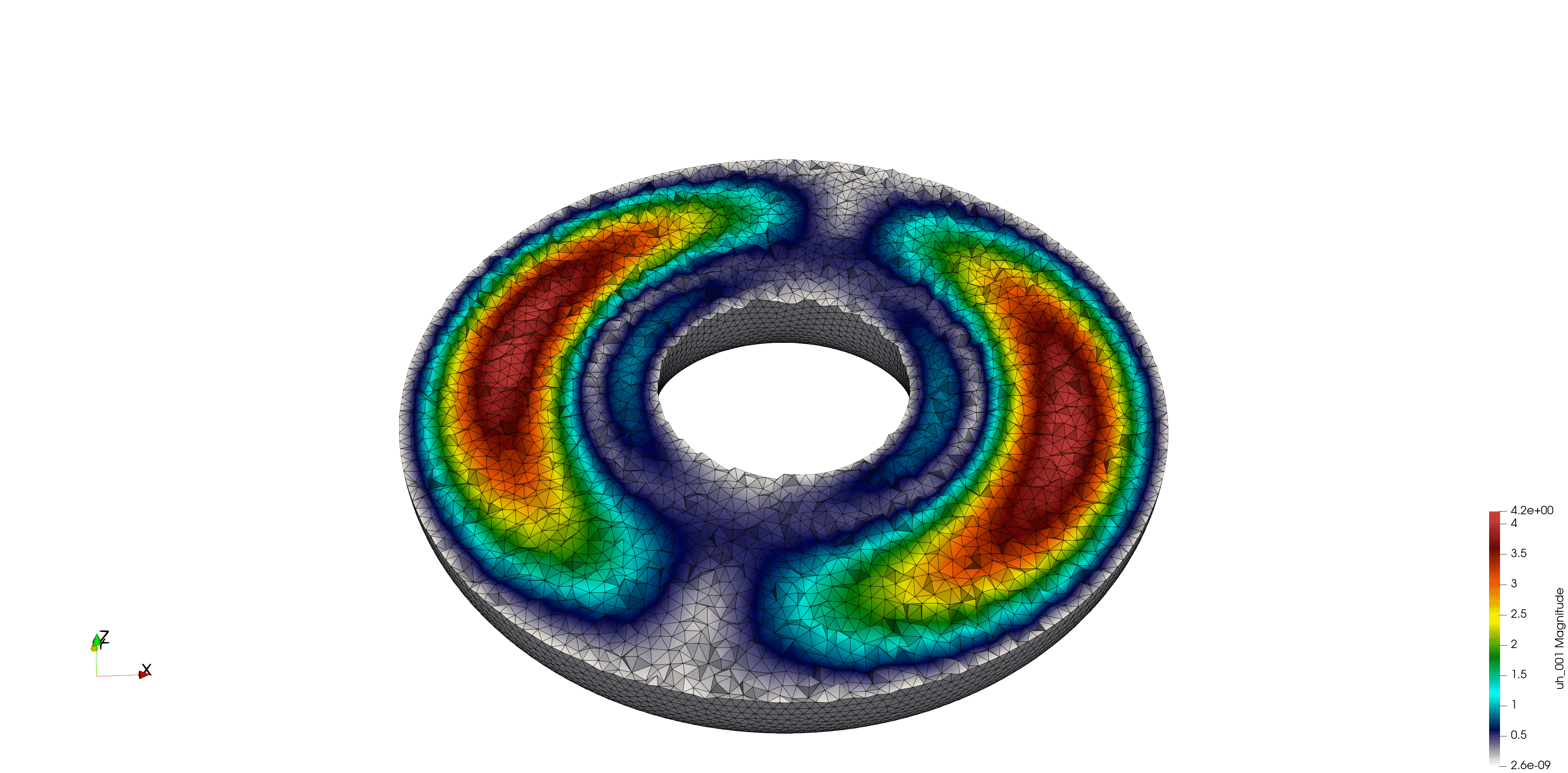}
		{\footnotesize{$|\bu_h|$}}
	\end{minipage}
	\begin{minipage}{0.32\linewidth}
		\centering
		\includegraphics[scale=0.05, trim=40cm 3cm 38cm 10cm, clip]{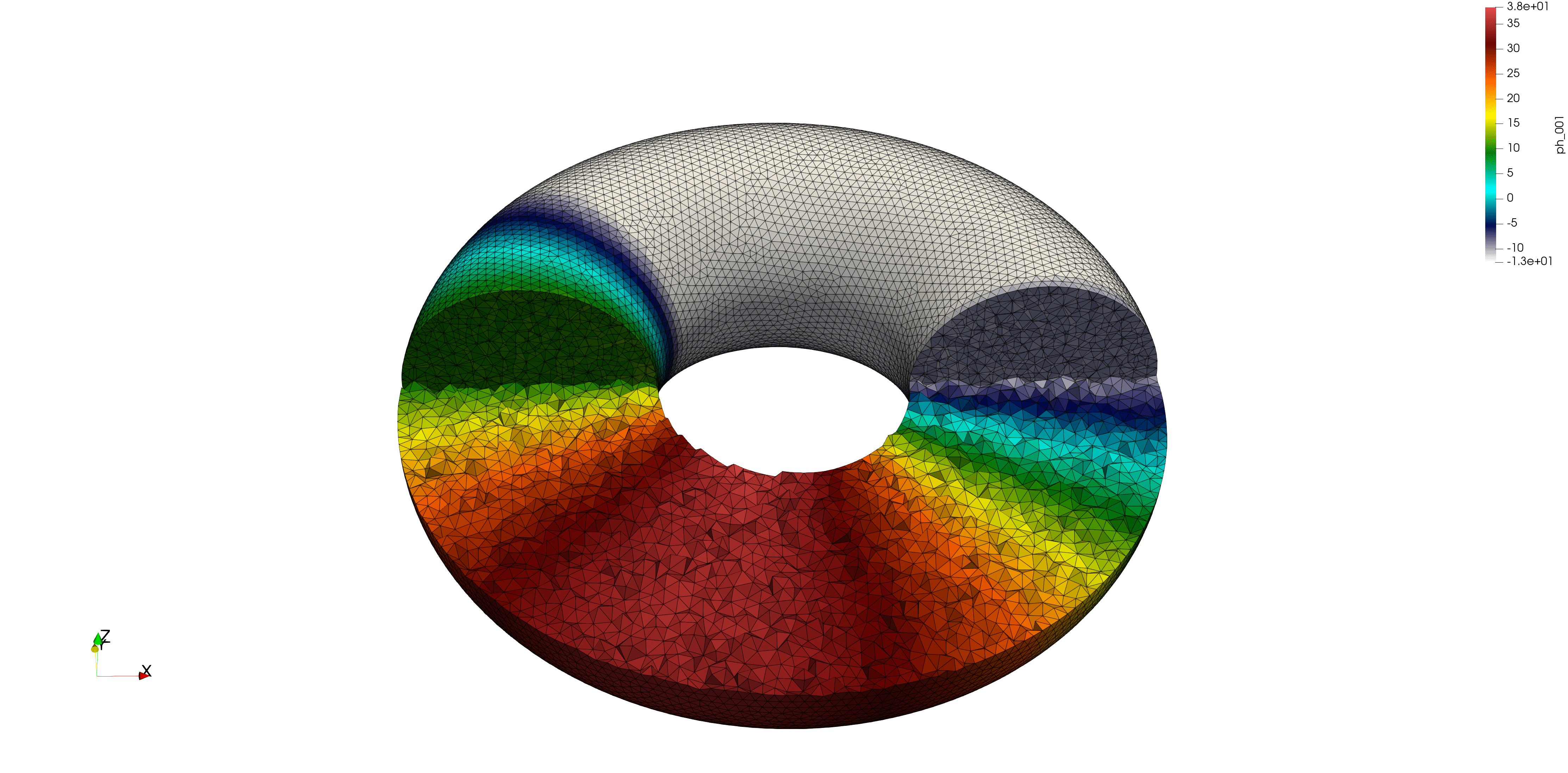}
		{\footnotesize{$p_h$}}
	\end{minipage}
	\begin{minipage}{0.32\linewidth}
		\centering
		\includegraphics[scale=0.05, trim=40cm 3cm 38cm 10cm, clip]{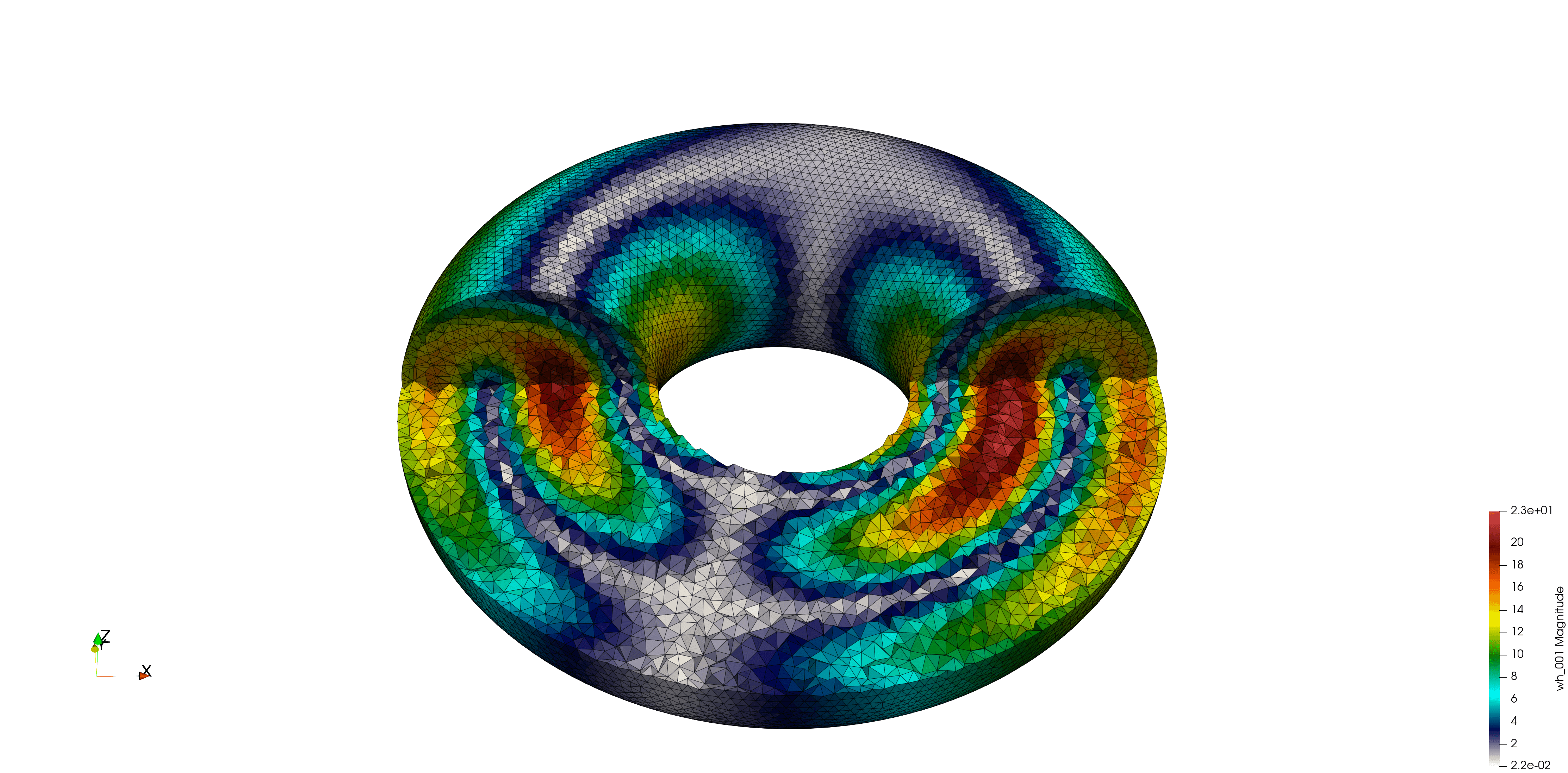}
		{\footnotesize{$|\boldsymbol{\omega}_h|$}}
	\end{minipage}
	\caption{Test 2. Slices of the torus domain showing the eigenmodes for the second lowest frequency, with $\nu=0.49$.}
	\label{fig:torus-nu049}
\end{figure}
\begin{figure}[!h]
	\centering
	\begin{minipage}{0.32\linewidth}
		\centering
		\includegraphics[scale=0.05, trim=40cm 3cm 38cm 10cm, clip]{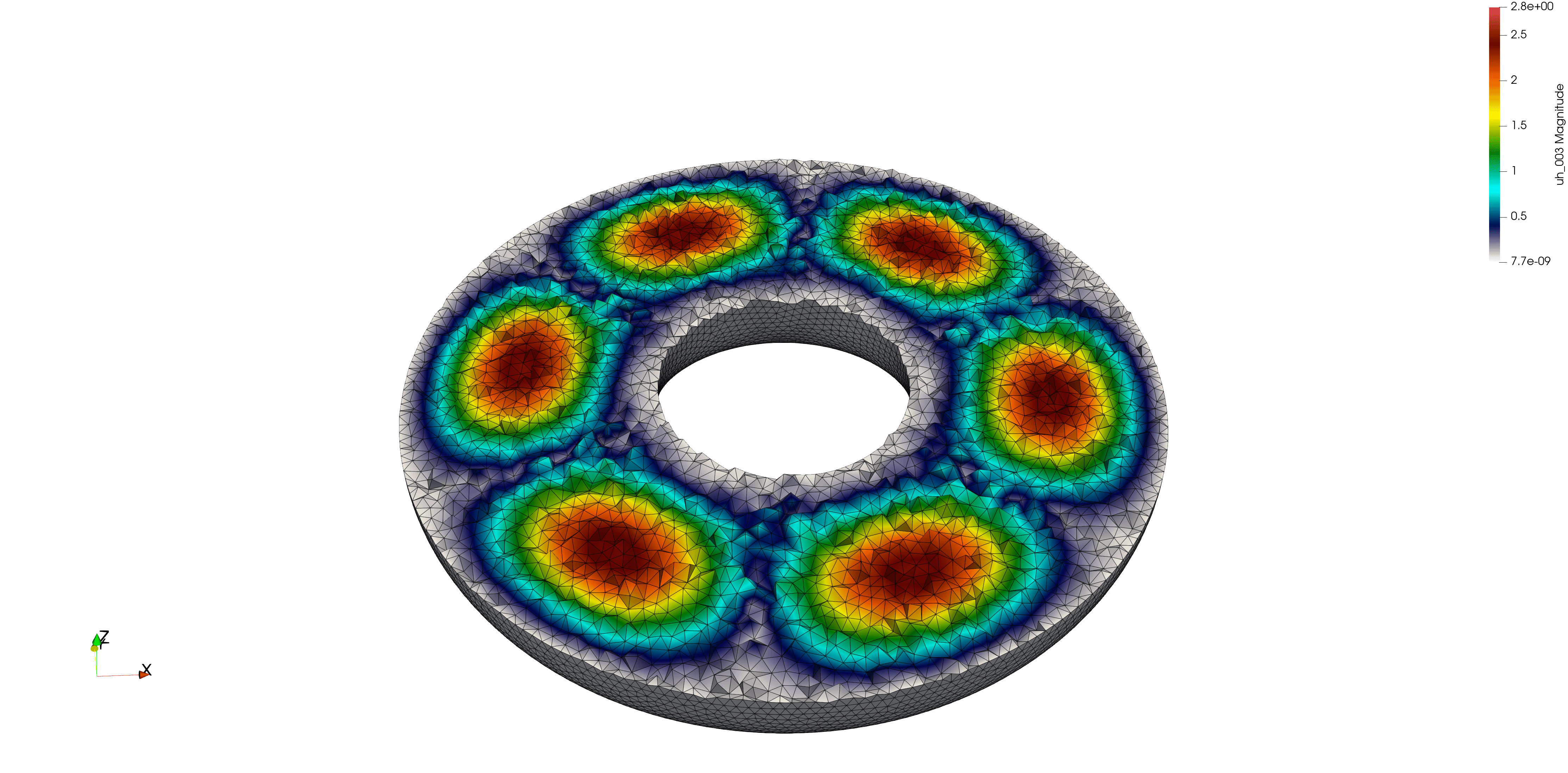}
		{\footnotesize{$|\bu_h|$}}
	\end{minipage}
	\begin{minipage}{0.32\linewidth}
		\centering
		\includegraphics[scale=0.05, trim=40cm 3cm 38cm 10cm, clip]{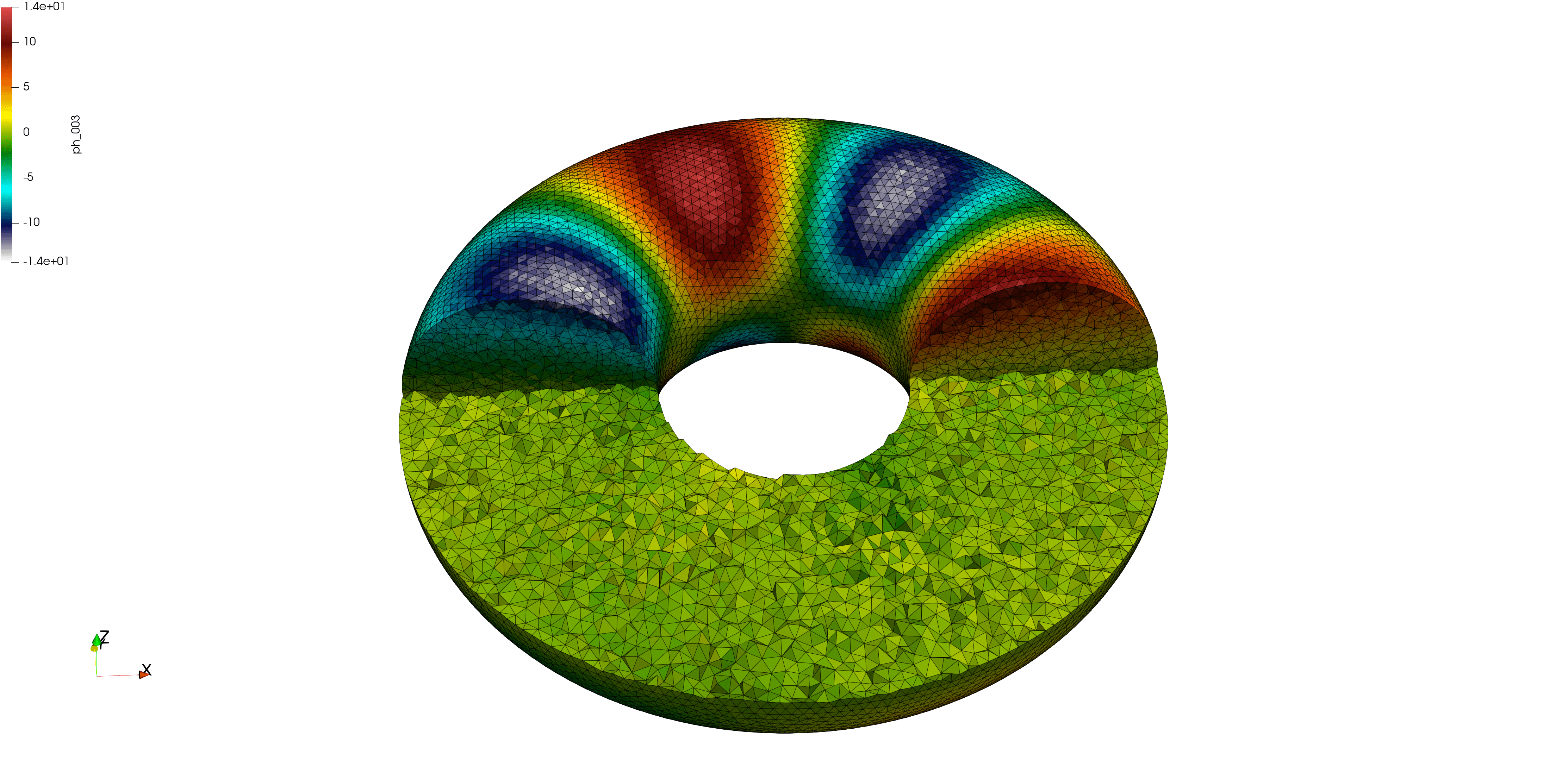}
		{\footnotesize{$p_h$}}
	\end{minipage}
	\begin{minipage}{0.32\linewidth}
		\centering
		\includegraphics[scale=0.05, trim=40cm 3cm 38cm 10cm, clip]{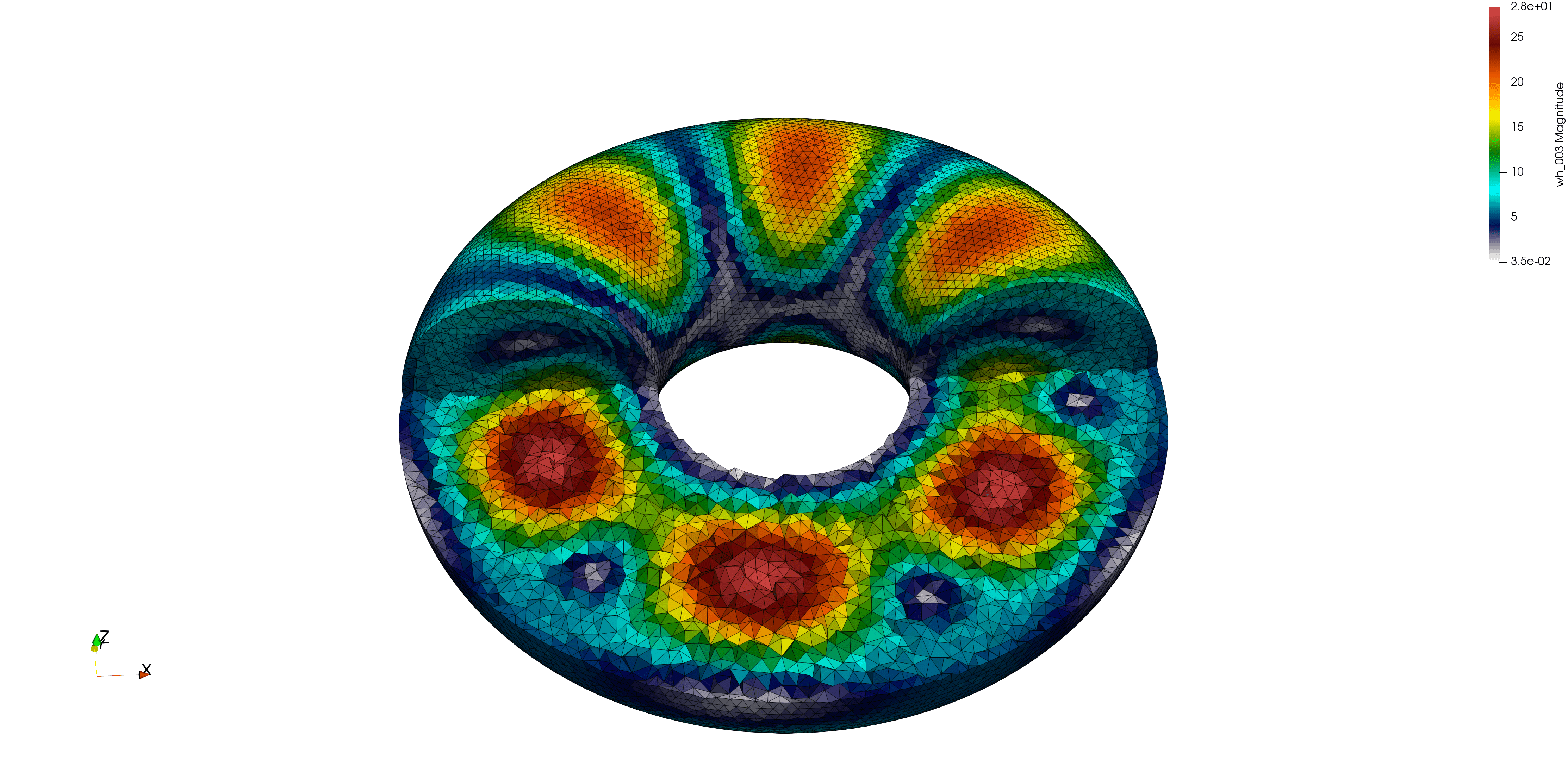}
		{\footnotesize{$|\boldsymbol{\omega}_h|$}}
	\end{minipage}
	\caption{Test 2. Slices of the torus domain showing the eigenmodes for the fourth lowest frequency, with $\nu=0.4999$.}
	\label{fig:torus-nu04999}
\end{figure}
\subsection{Experiments using adaptive refinements}
\label{subsec:test-afem}
In this section we test our a posteriori estimator $\zeta$, previously introduced and analyzed in Section \ref{sec:apost}. With the aim of assess the performance of our estimator,  we consider domains   with singularities in two and three dimensions, expecting that the estimator be capable of identify such singularities and refine adaptively. For this tests, we consider two and three dimensional domains. On each adaptive iteration, we use the blue-green marking strategy to refine each $T'\in \CT_{h}$ whose indicator $\zeta_{T'}$ satisfies
$$
\zeta_{T'}\geq 0.5\max\{\zeta_{T}\,:\,T\in\CT_{h} \},
$$
The effectivity indexes with respect to $\zeta$ and the eigenvalue $\kappa_i$ are defined by
$$
\eff(\kappa_i):=\frac{\err(\kappa_i)}{\zeta^2}.
$$
\subsubsection{Test 3. A square with a hole} In this experiment we test the adaptive algorithm in a domain containing singularities. We consider the domain $\Omega=\widehat{\Omega}\backslash\widetilde{\Omega}$, where $\widehat{\Omega}$ is the unit square rotated $\pi/4$ around its centroid, whereas $\widetilde{\Omega}:=(129/400,271/400)^2$. An example of this domain is depicted in Figure \ref{fig:square-with-hole-inicial}. The stabilization parameter for the incompressible limit is taken to be $\alpha^{-1}=10$.
\begin{figure}[!h]
	\centering
	\includegraphics[scale=0.05, trim=40cm 0cm 40cm 0,clip]{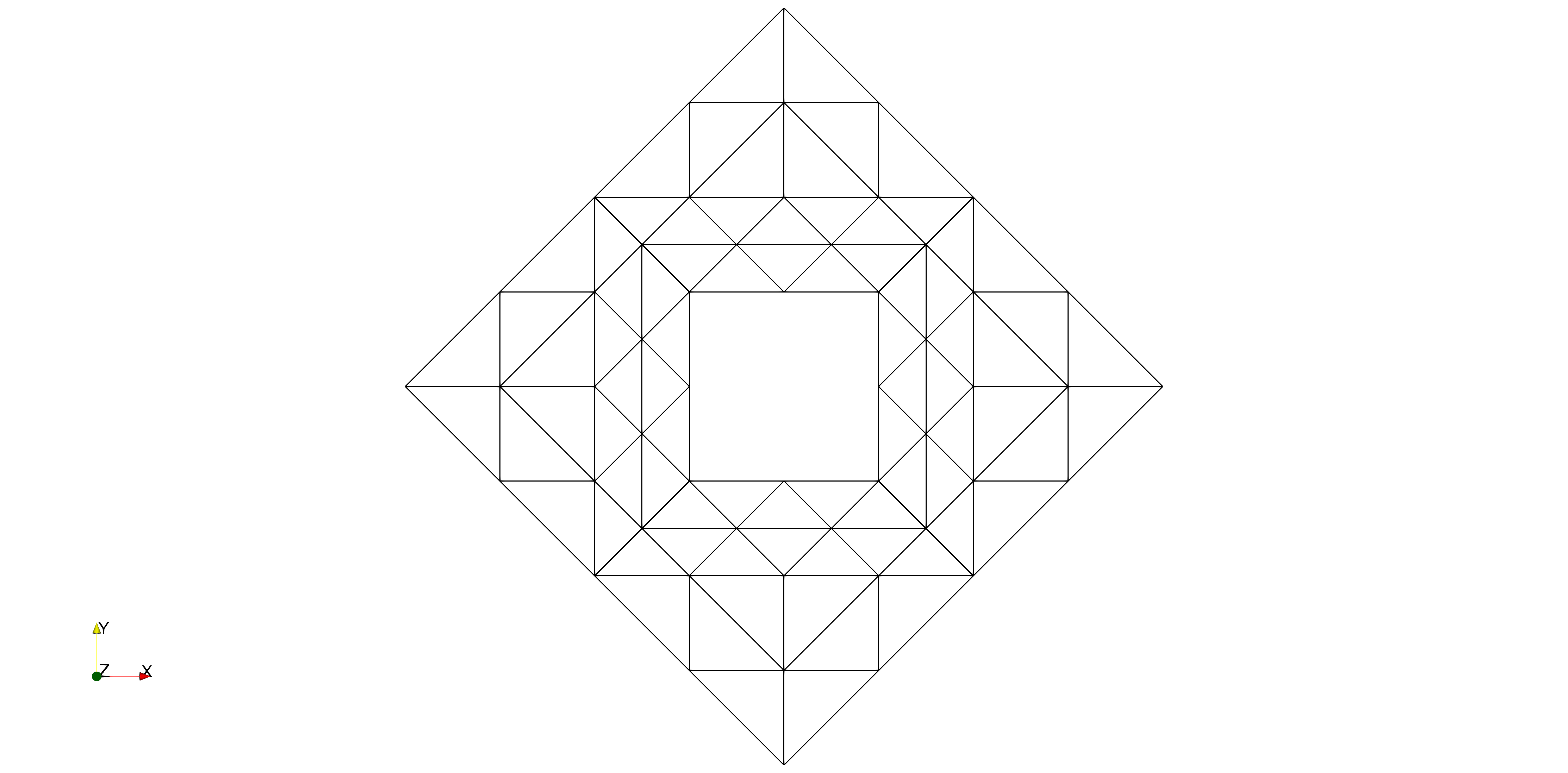}
	\caption{Test 3. Initial mesh on the square with a hole.}
	\label{fig:square-with-hole-inicial}
\end{figure}

In this case, we have 4 reentrant angles, so the first eigenfunction is singular.  In fact, the optimal convergence order with uniform refinements is expected to be approximately $\mathcal{O}(N^{-\sqrt{3}/2})\approx \mathcal{O}(h^{1.7})$ for all $k\geq1$. 
To observe in detail the error behavior, we show in Figure \ref{fig:adaptive-square-with-hole-errores} the adaptive and uniform error curves, where the rapid decay of the adaptive error is evident for $k>1$. Moreover, in Figures \ref{fig:adaptive-square-with-hole-eff-etas-035}-\ref{fig:adaptive-square-with-hole-eff-etas-04999} we observe that the estimator $\zeta^2$ behaves like $\mathcal{O}(h^{2k})$, keeping the effectivity indexes appropriate bounded above and below. This verifies that our estimator in efficient and reliable.  The experimental convergence rate for this experiment is based on the comparison of the computed eigenvalues with the values on Table \ref{tabla:extrapolated_square_hole}.
\begin{table}[!h]
	\centering
	\begin{tabular}{c|c}
		$\nu$& $\sqrt{\kappa_1}$\\
		\midrule
		0.35 & 6.14518\\
		0.4999 & 6.04518
	\end{tabular}
	\caption{Lowest computed eigenvalues obtained from highly refined meshes and least-square fitting.}
	\label{tabla:extrapolated_square_hole}
\end{table}
On the other hand, Figures \ref{fig:adaptive-square-with-hole-nu035}--\ref{fig:adaptive-square-with-hole-nu04999} show the meshes at different stages of the adaptive refinement for $\nu=0.35$ and $\nu=0.4999$, respectively. We complete this experiment by showing the first eigenmode magnitudes, together with the computed pressure and rotations, which in this case are similar for the selected values of $\nu$. It notes that high gradients of pressure, together with high rotations are present near the singularities.
\begin{figure}[!h]
	\centering
	\begin{minipage}{0.495\linewidth}\centering
		\includegraphics[scale=0.17, trim=40cm 0cm 40cm 0,clip]{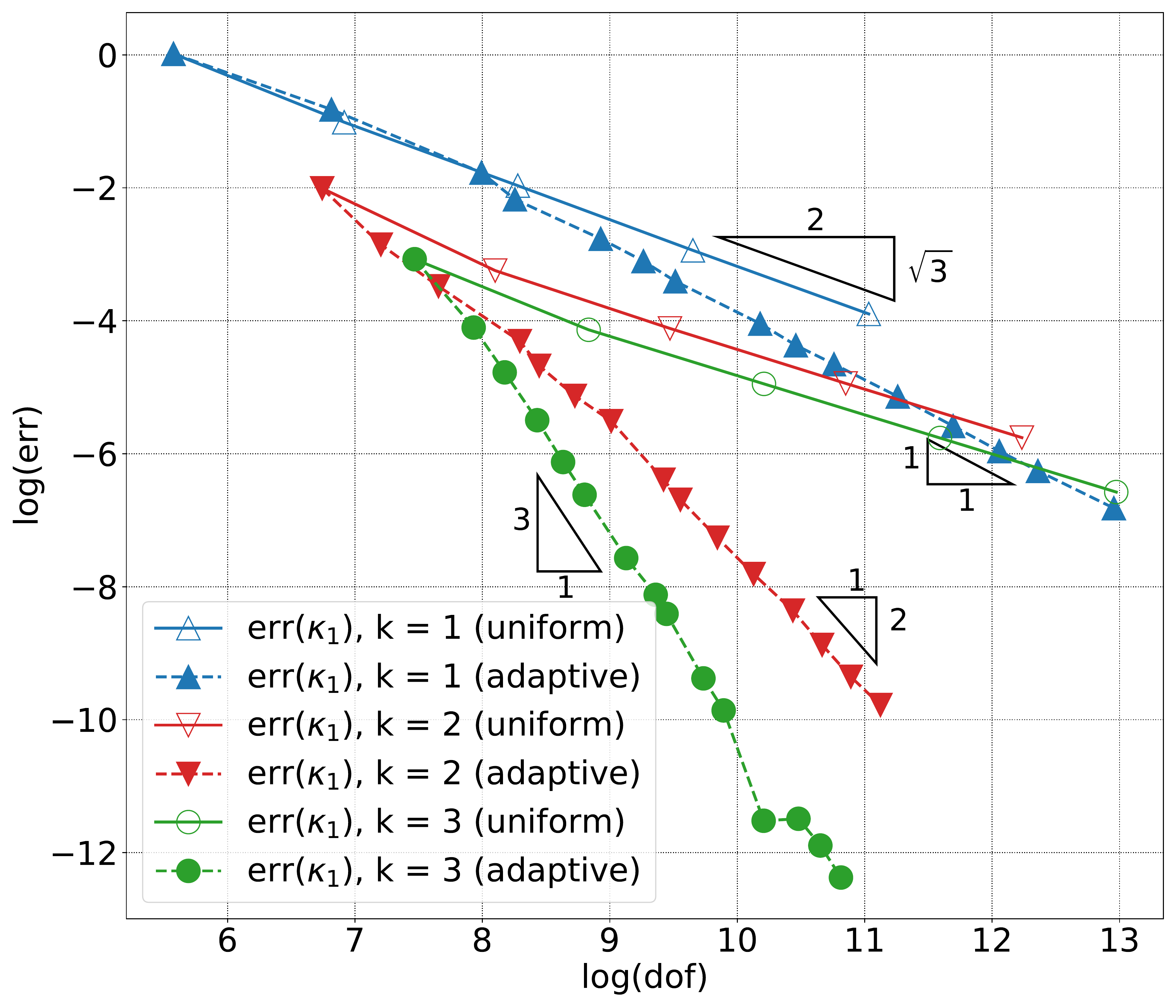}\\
		{\footnotesize $\nu=0.35$}
	\end{minipage}
	\begin{minipage}{0.495\linewidth}\centering
		\includegraphics[scale=0.17, trim=40cm 0cm 40cm 0,clip]{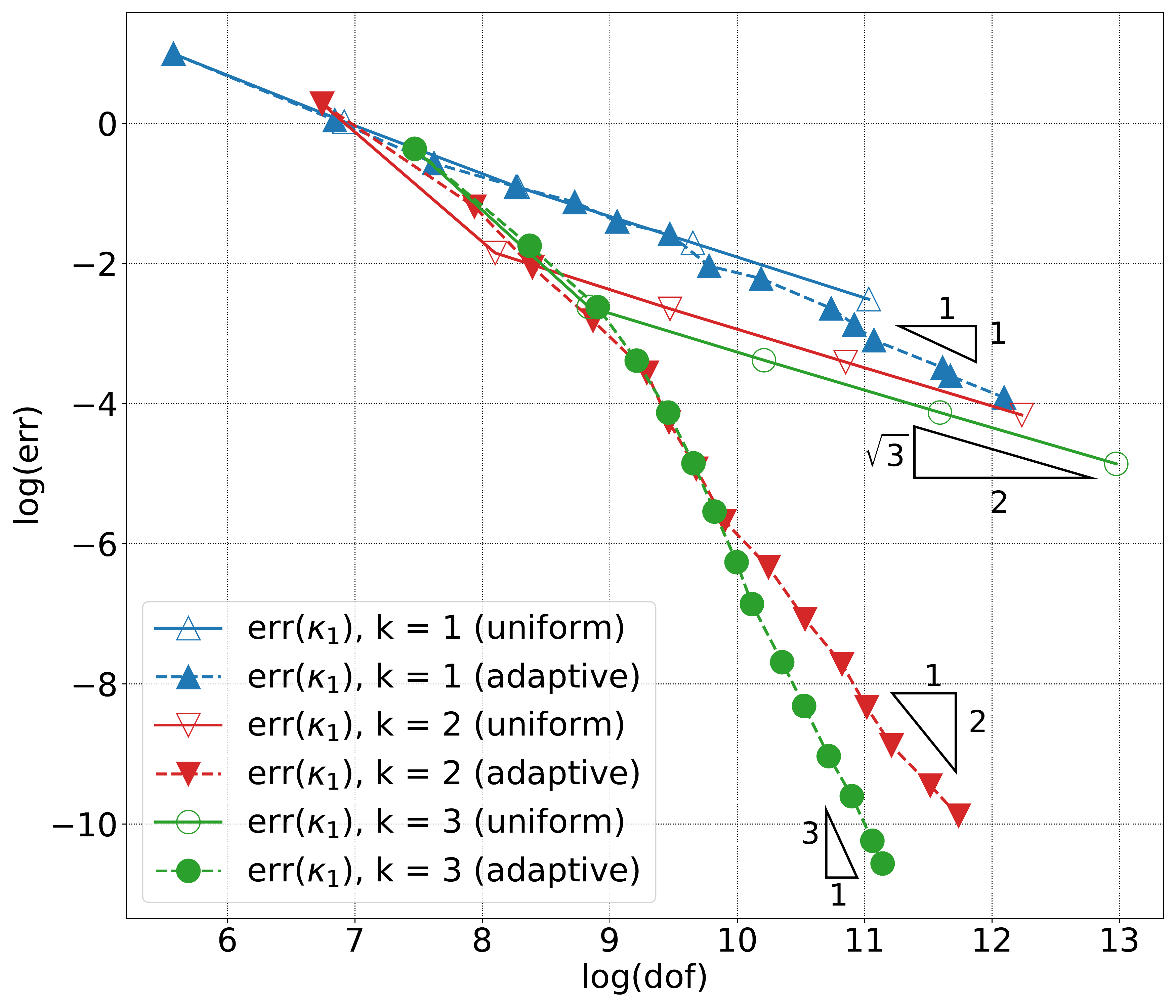}\\
		{\footnotesize $\nu=0.4999$}
	\end{minipage}\\
	\caption{Test 3. Error curves for different values of $k$ and $\nu$.}
	\label{fig:adaptive-square-with-hole-errores}
\end{figure}
\begin{figure}[!h]
	\centering
	\begin{minipage}{0.495\linewidth}\centering
		\includegraphics[scale=0.17, trim=40cm 0cm 40cm 0,clip]{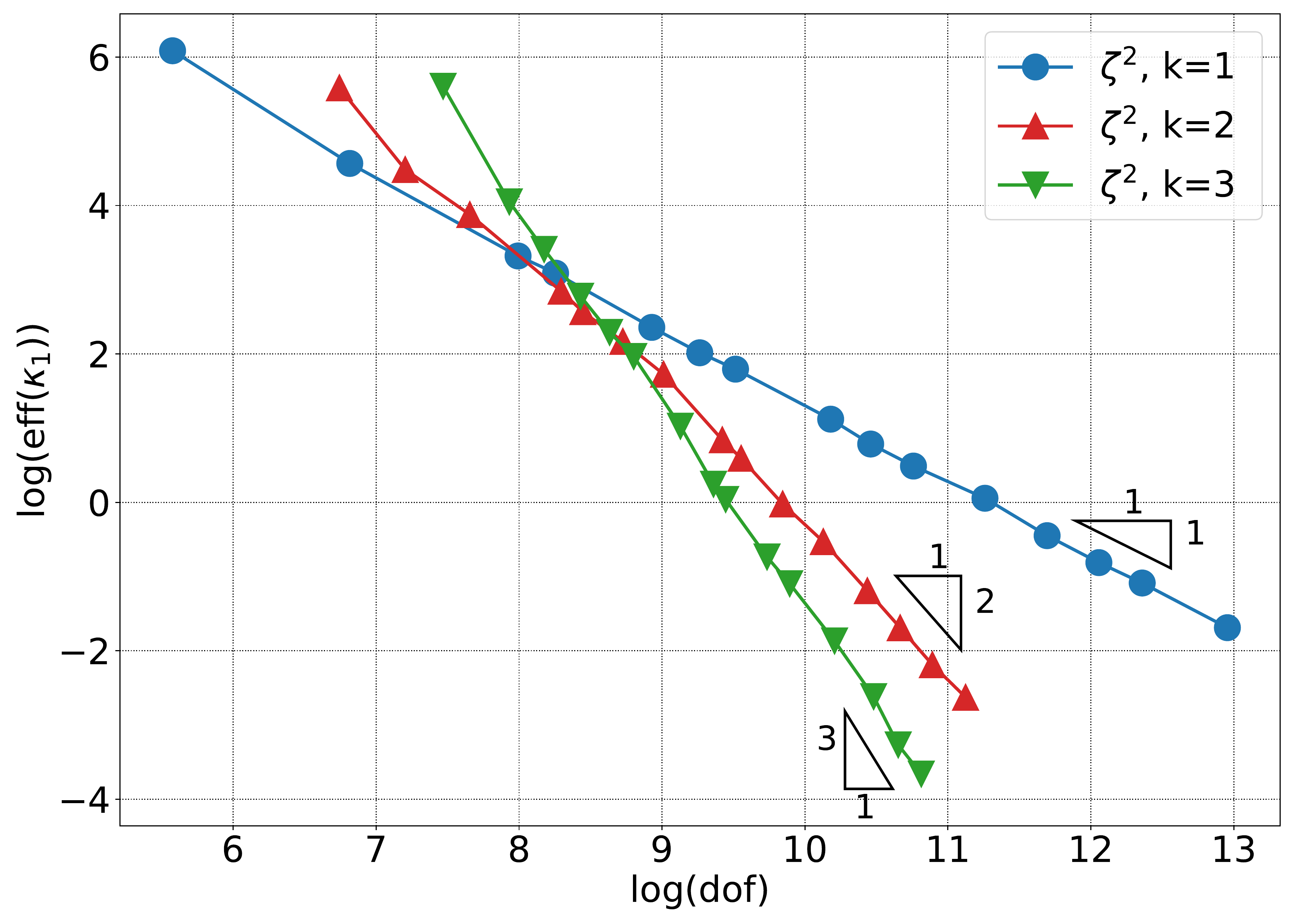}\\
	\end{minipage}
	\begin{minipage}{0.495\linewidth}\centering
		\includegraphics[scale=0.17, trim=40cm 0cm 40cm 0,clip]{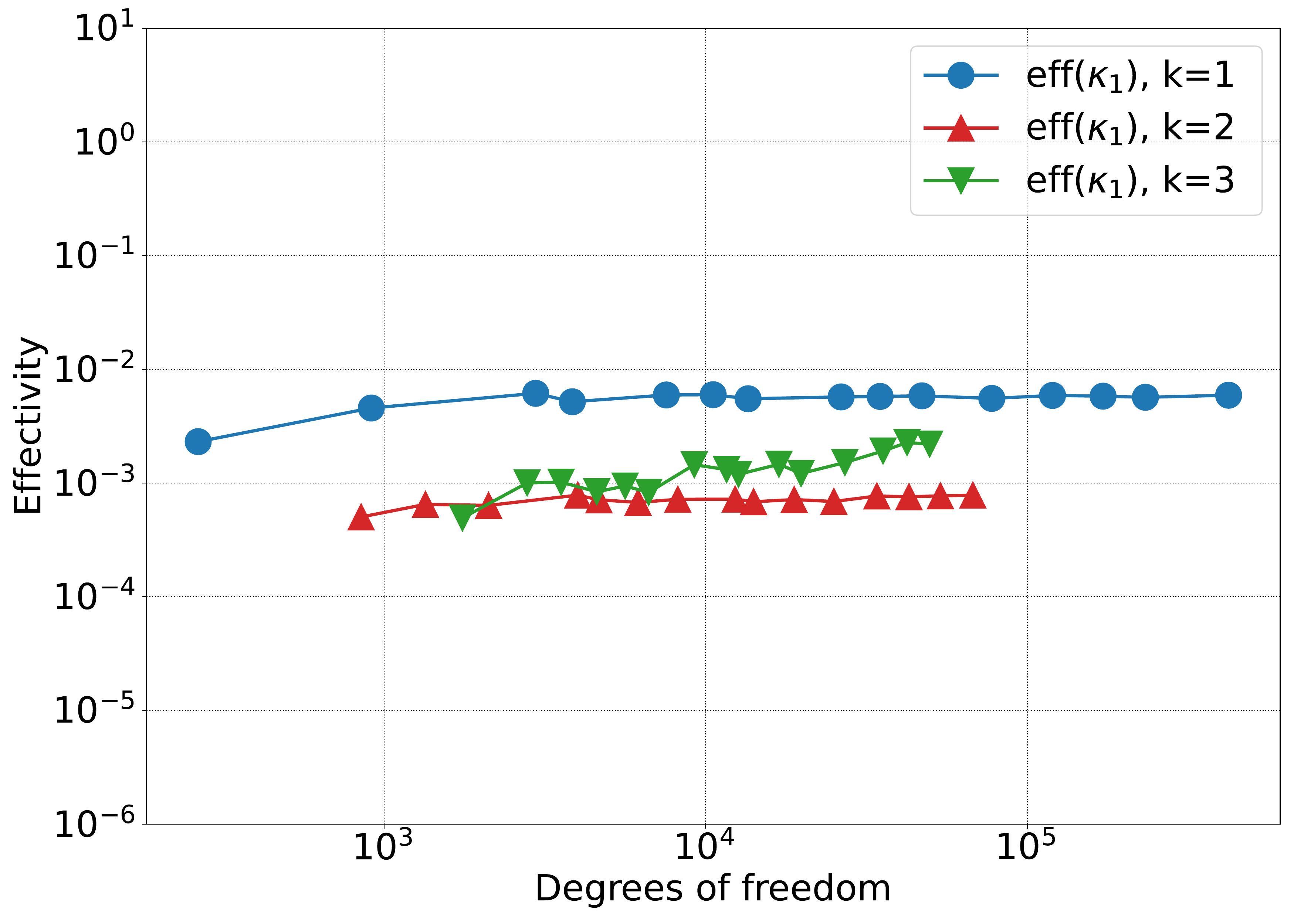}\\
	\end{minipage}\\
	\caption{Test 3. Estimator convergence and effectivity indexes for different values of $k$, with $\nu=0.35$.}
	\label{fig:adaptive-square-with-hole-eff-etas-035}
\end{figure}
\begin{figure}[!h]
	\centering
	\begin{minipage}{0.495\linewidth}\centering
		\includegraphics[scale=0.17, trim=40cm 0cm 40cm 0,clip]{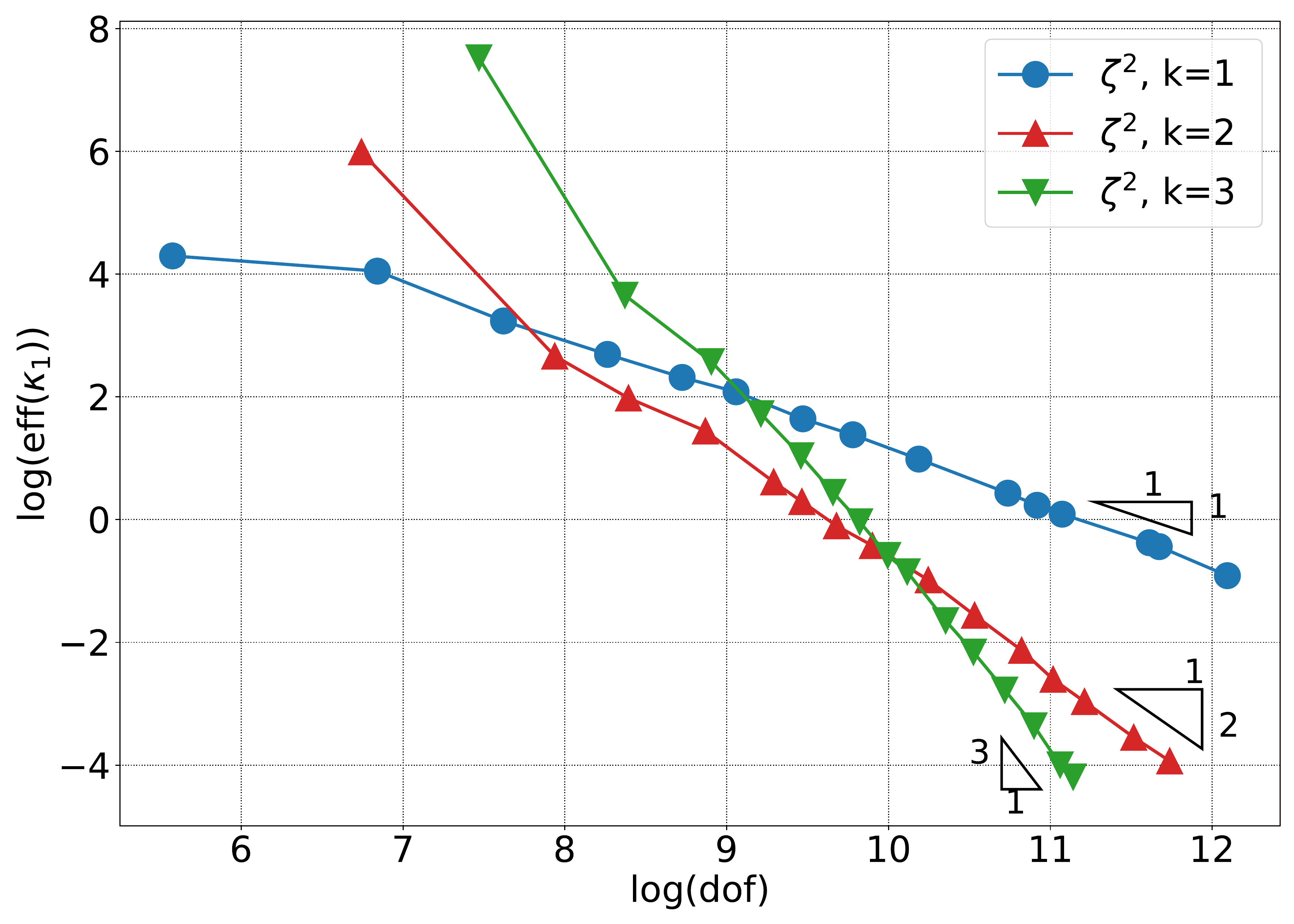}\\
	\end{minipage}
	\begin{minipage}{0.495\linewidth}\centering
		\includegraphics[scale=0.17, trim=40cm 0cm 40cm 0,clip]{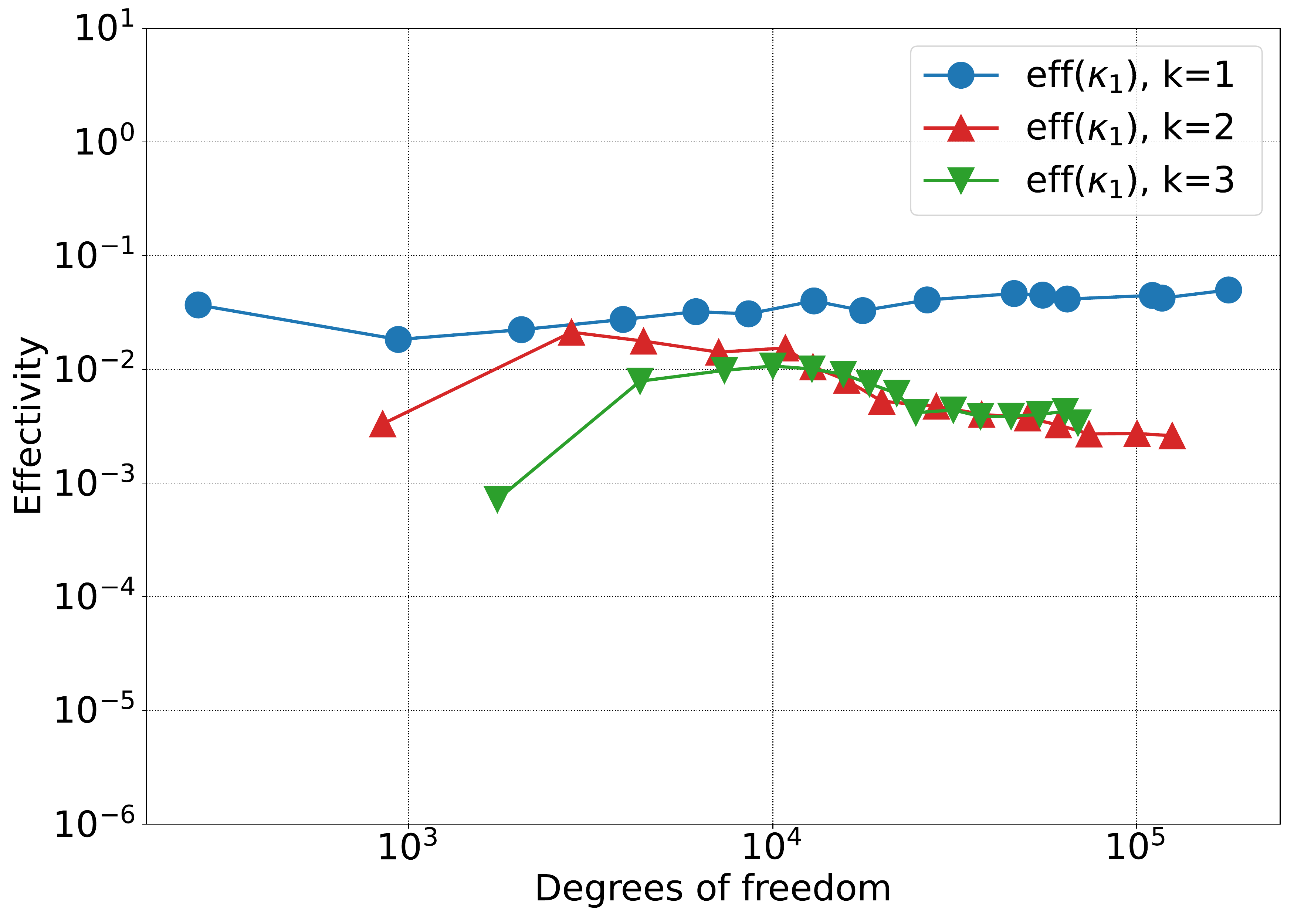}\\
	\end{minipage}\\
	\caption{Test 3. Estimator convergence and effectivity indexes for different values of $k$, with $\nu=0.4999$.}
	\label{fig:adaptive-square-with-hole-eff-etas-04999}
\end{figure}
\begin{figure}[!h]
	\centering
	\begin{minipage}{0.49\linewidth}\centering
		\includegraphics[scale=0.05, trim=40cm 0cm 40cm 0,clip]{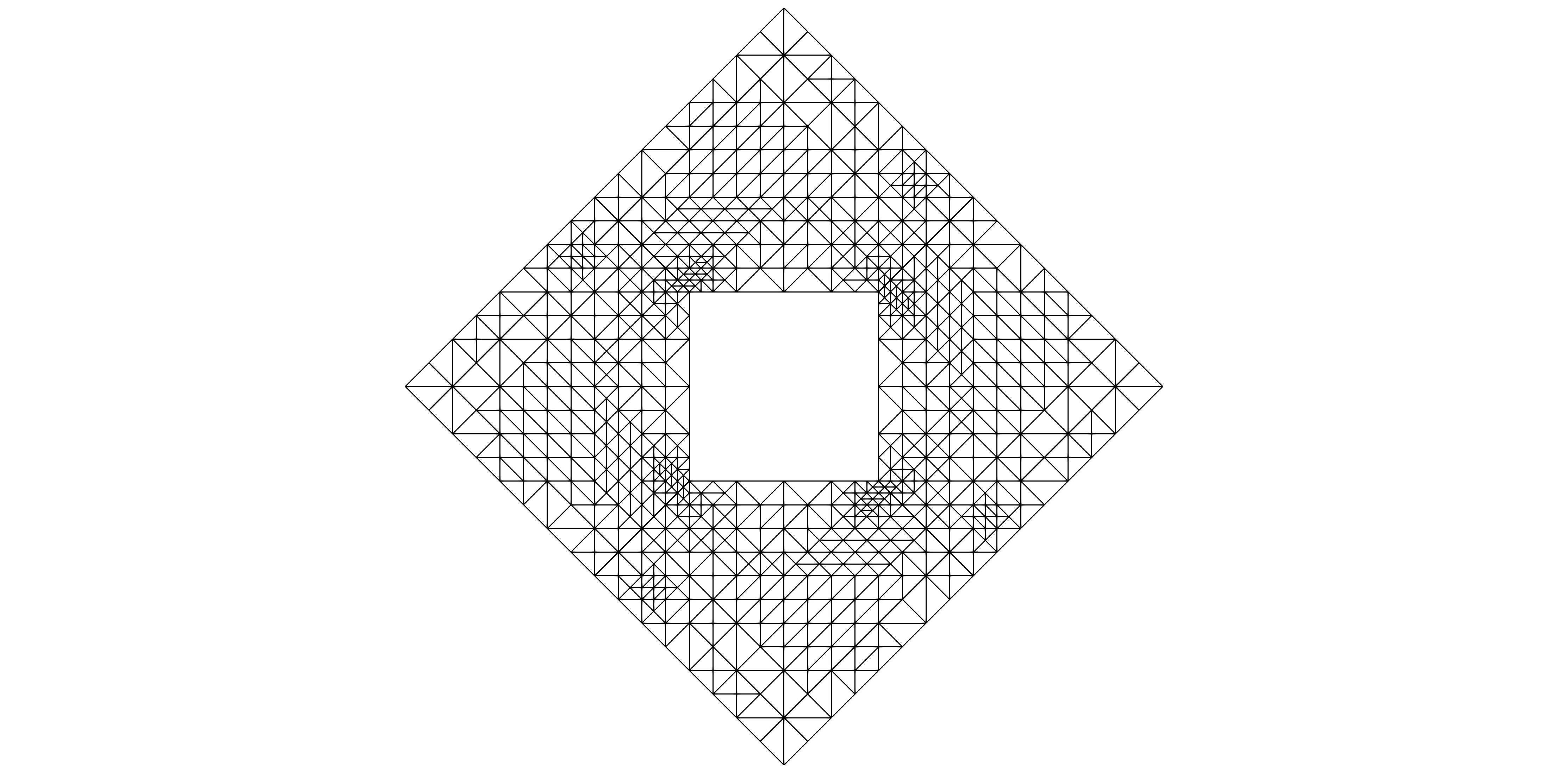}\\
		{\footnotesize 4-th iteration}
	\end{minipage}
	\begin{minipage}{0.49\linewidth}\centering
		\includegraphics[scale=0.05, trim=40cm 0cm 40cm 0,clip]{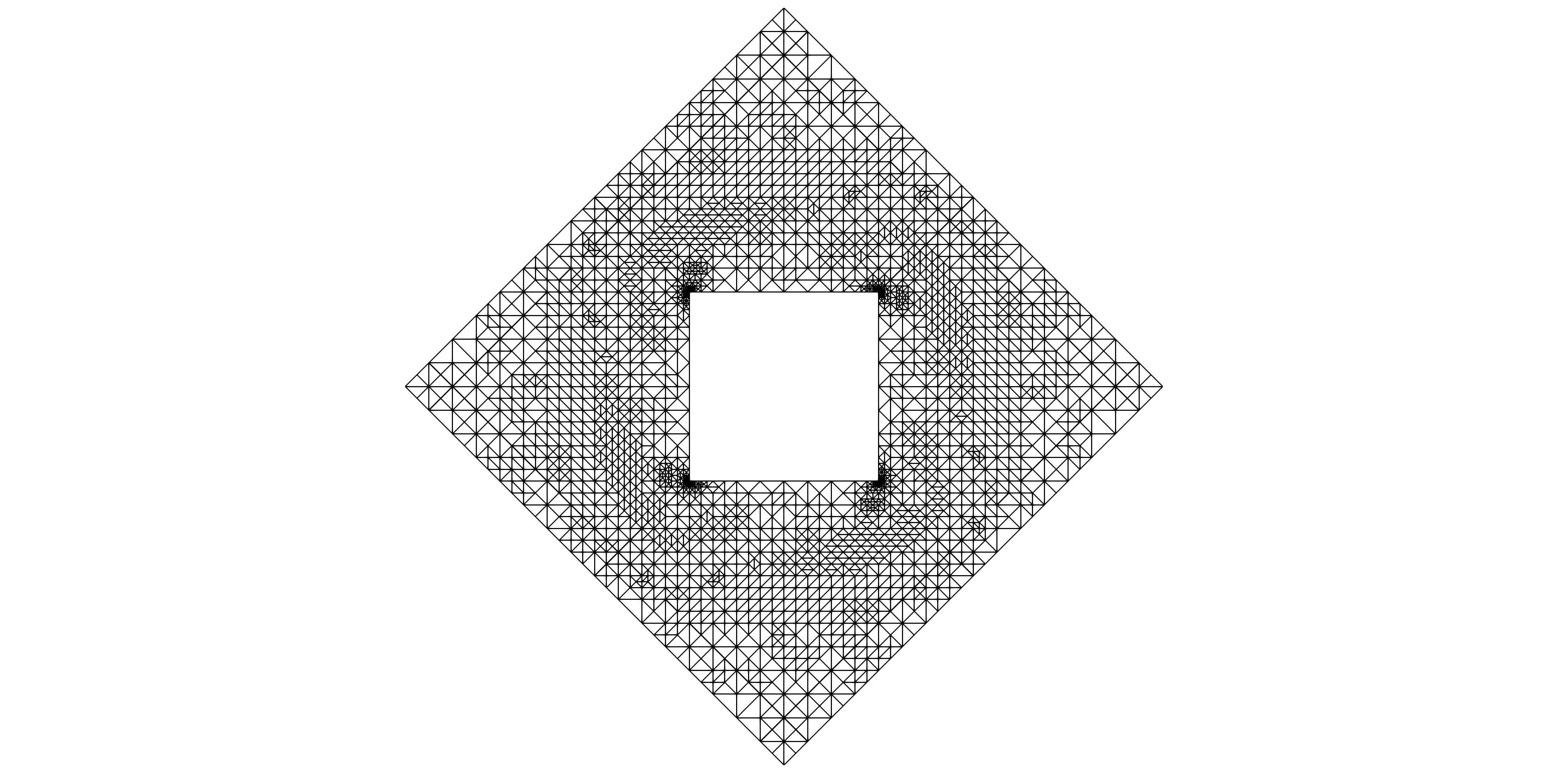}\\
		{\footnotesize 8-th iteration}
	\end{minipage}\\
	\begin{minipage}{0.49\linewidth}\centering
		\includegraphics[scale=0.05, trim=40cm 0cm 40cm 0,clip]{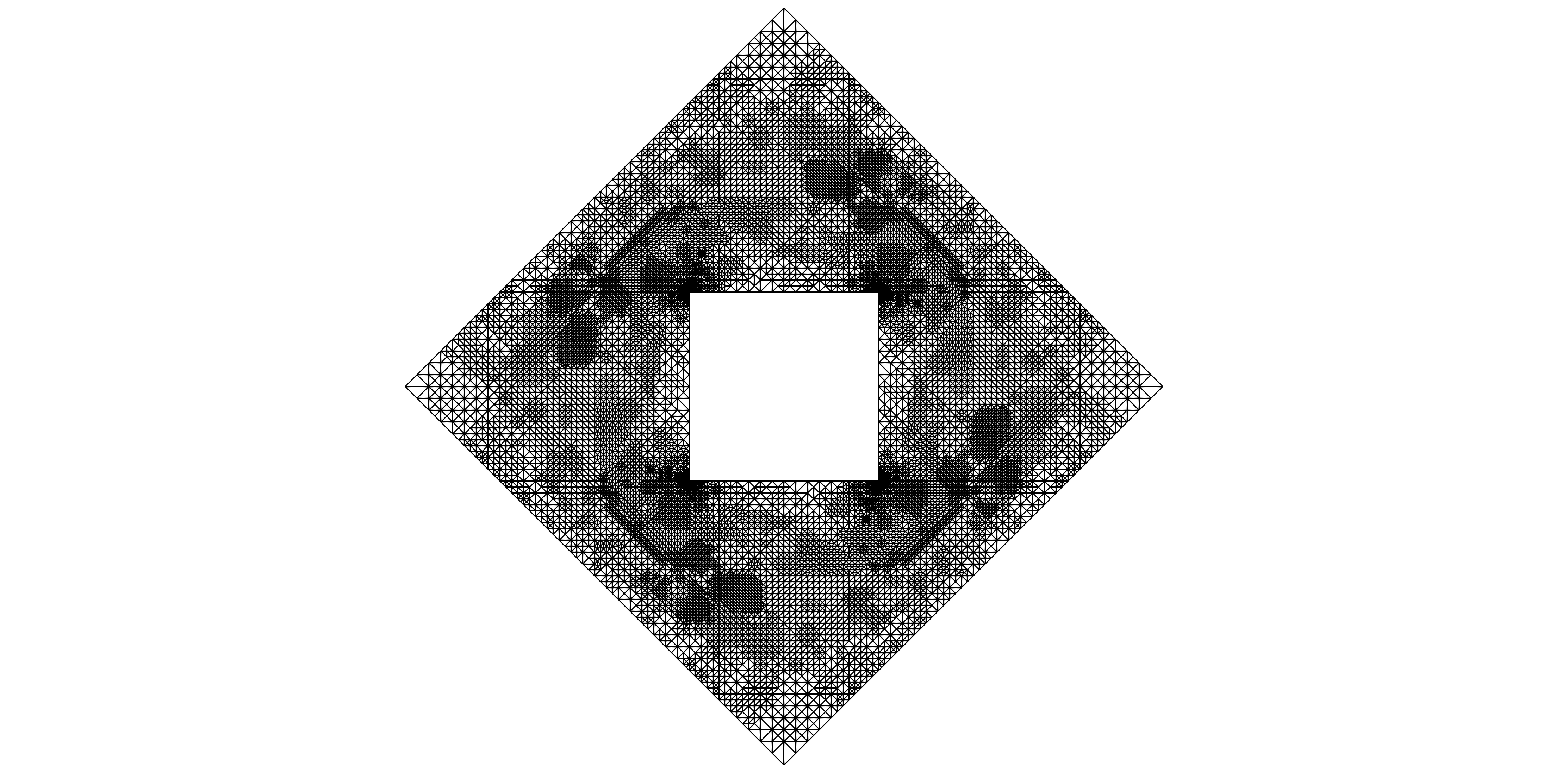}\\
		{\footnotesize 11-th iteration}
	\end{minipage}
	\begin{minipage}{0.49\linewidth}\centering
		\includegraphics[scale=0.05, trim=40cm 0cm 40cm 0,clip]{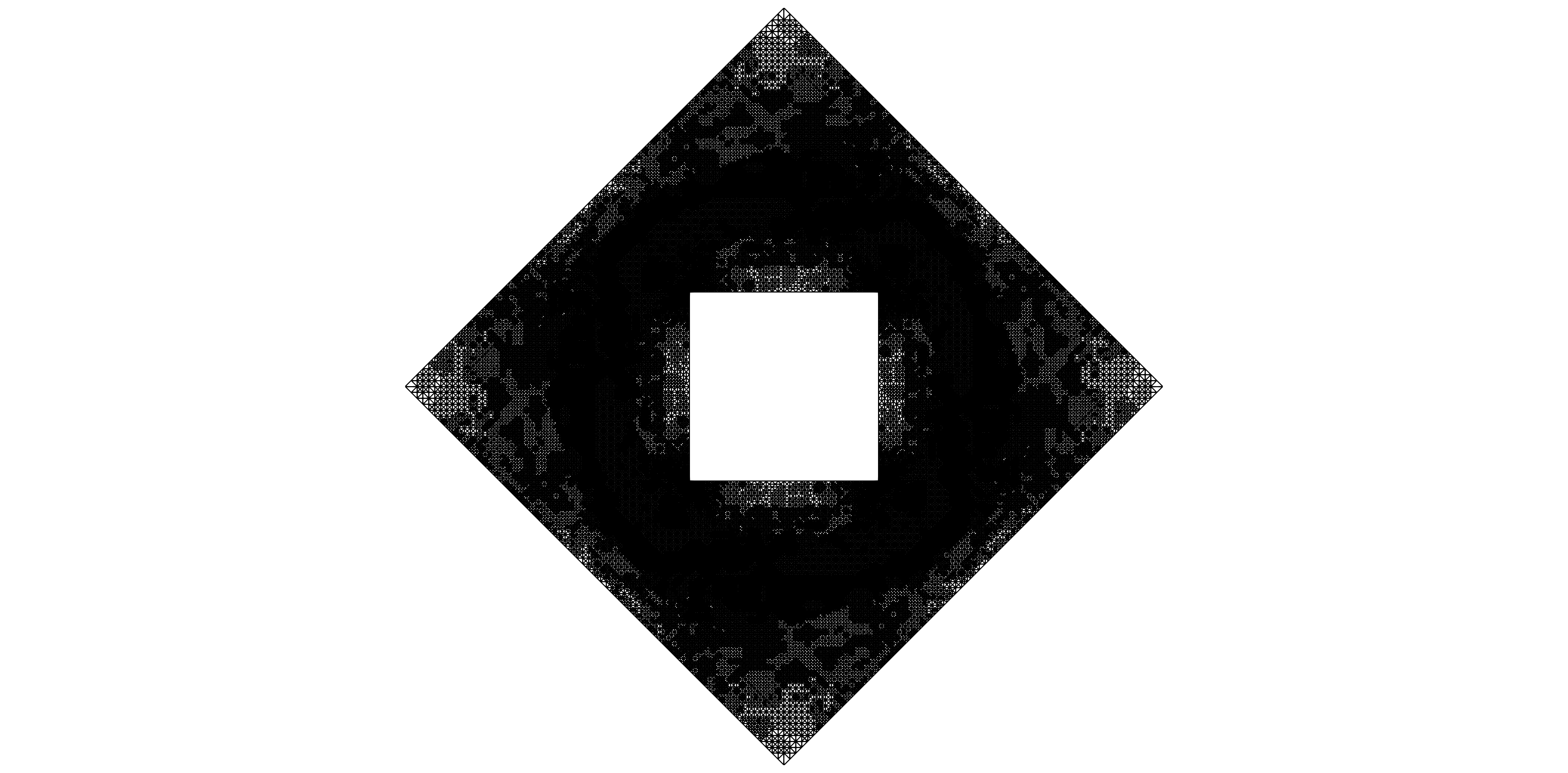}\\
		{\footnotesize 15-th iteration}
	\end{minipage}\\
	\caption{Test 3. Intermediate meshes in the adaptive refinement algorithm for $k=1$ and $\nu=0.35$.}
	\label{fig:adaptive-square-with-hole-nu035}
\end{figure}
\begin{figure}[!h]
	\centering
	\begin{minipage}{0.49\linewidth}\centering
		\includegraphics[scale=0.05, trim=40cm 0cm 40cm 0,clip]{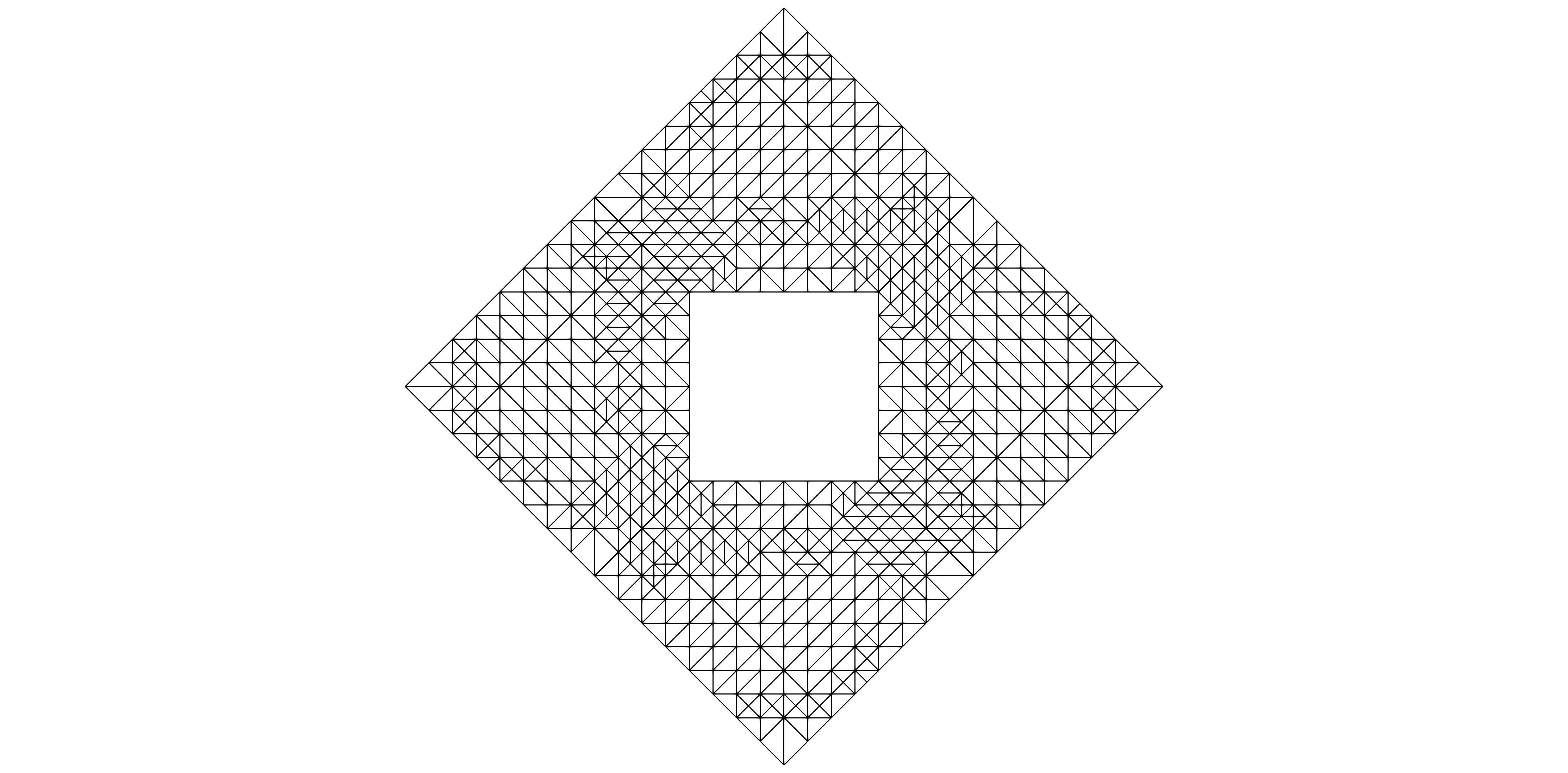}\\
		{\footnotesize 4-th iteration}
	\end{minipage}
	\begin{minipage}{0.49\linewidth}\centering
		\includegraphics[scale=0.05, trim=40cm 0cm 40cm 0,clip]{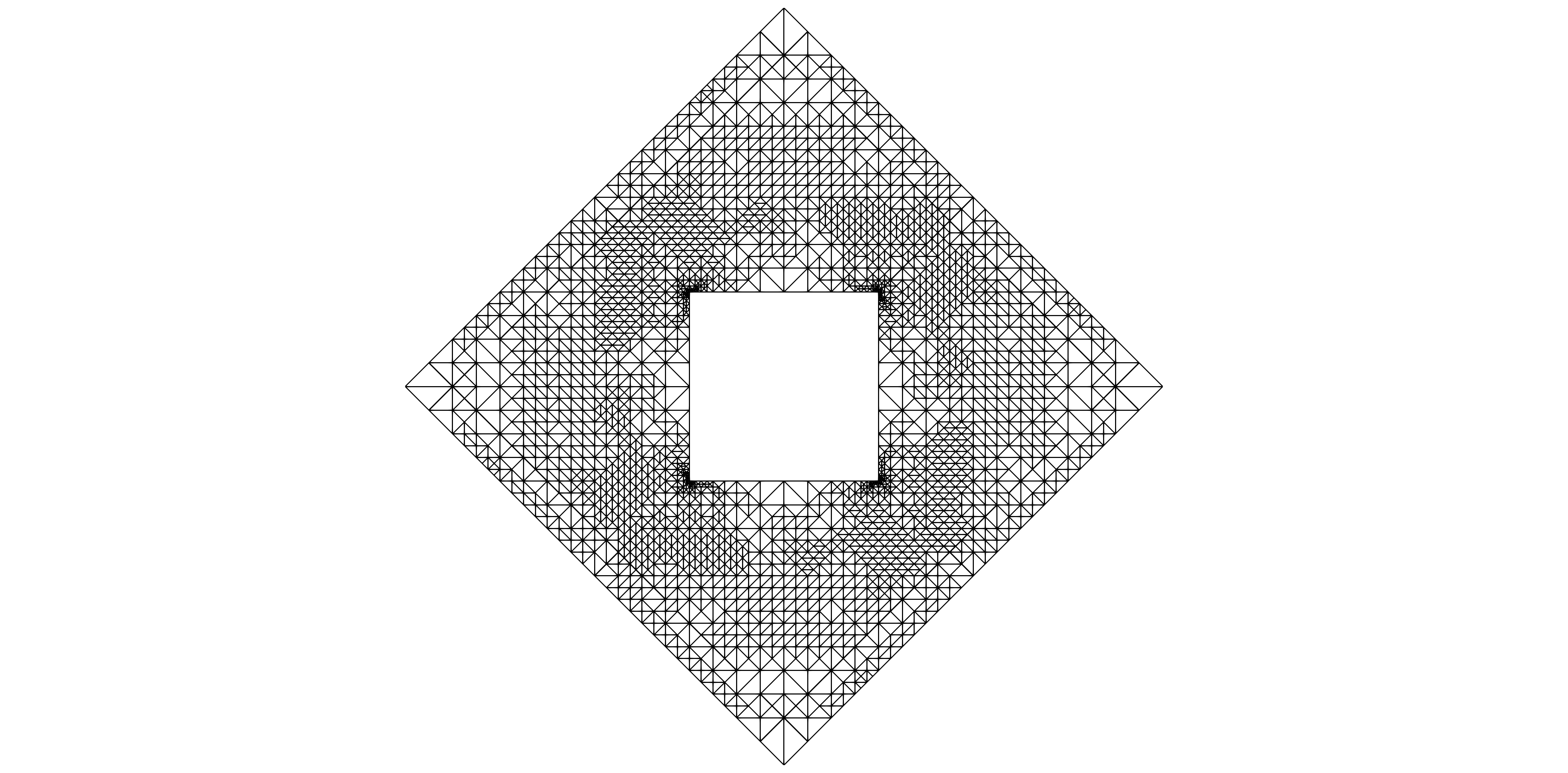}\\
		{\footnotesize 7-th iteration}
	\end{minipage}\\
	\begin{minipage}{0.49\linewidth}\centering
		\includegraphics[scale=0.05, trim=40cm 0cm 40cm 0,clip]{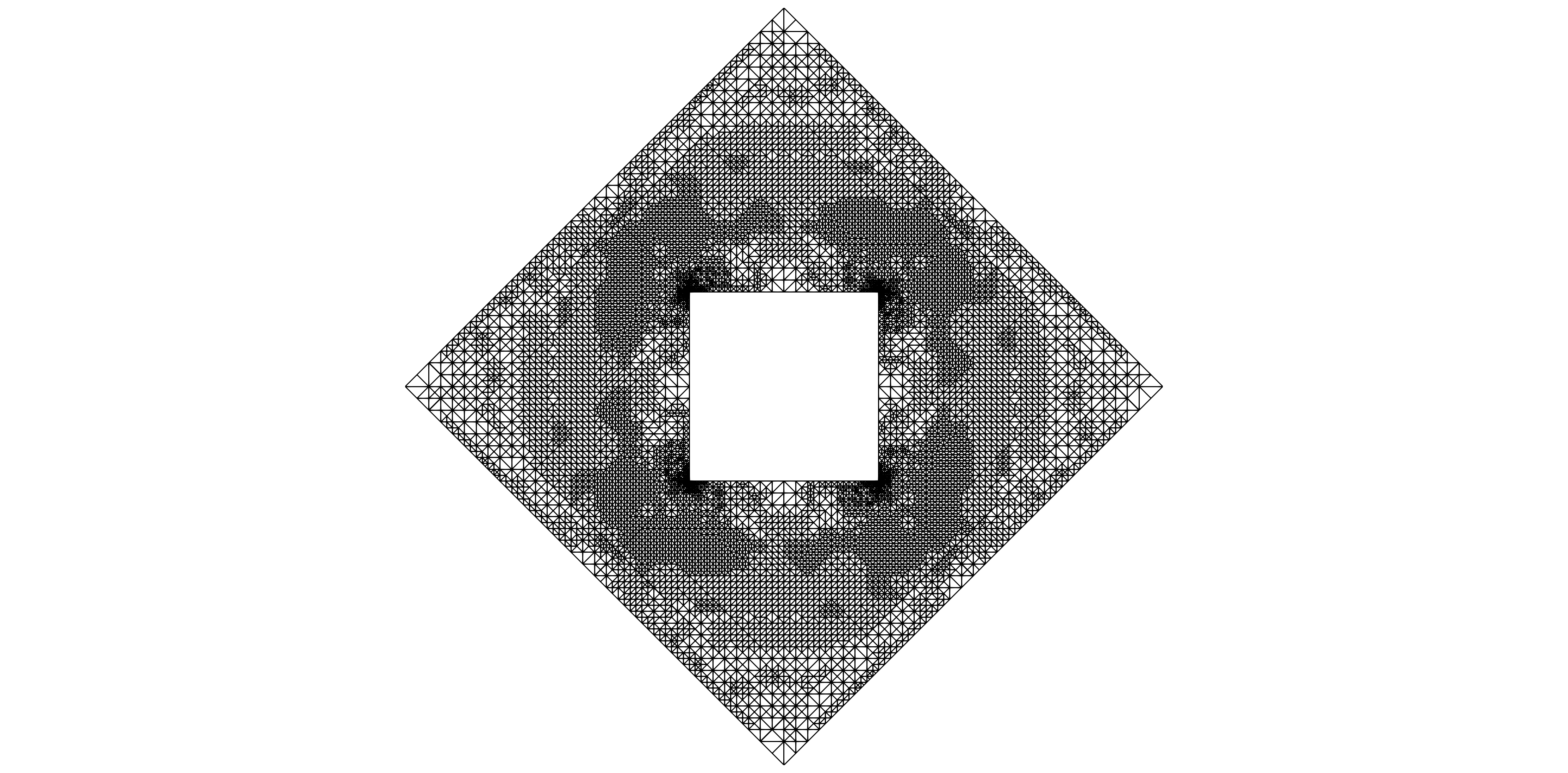}\\
		{\footnotesize 11-th iteration}
	\end{minipage}
	\begin{minipage}{0.49\linewidth}\centering
		\includegraphics[scale=0.05, trim=40cm 0cm 40cm 0,clip]{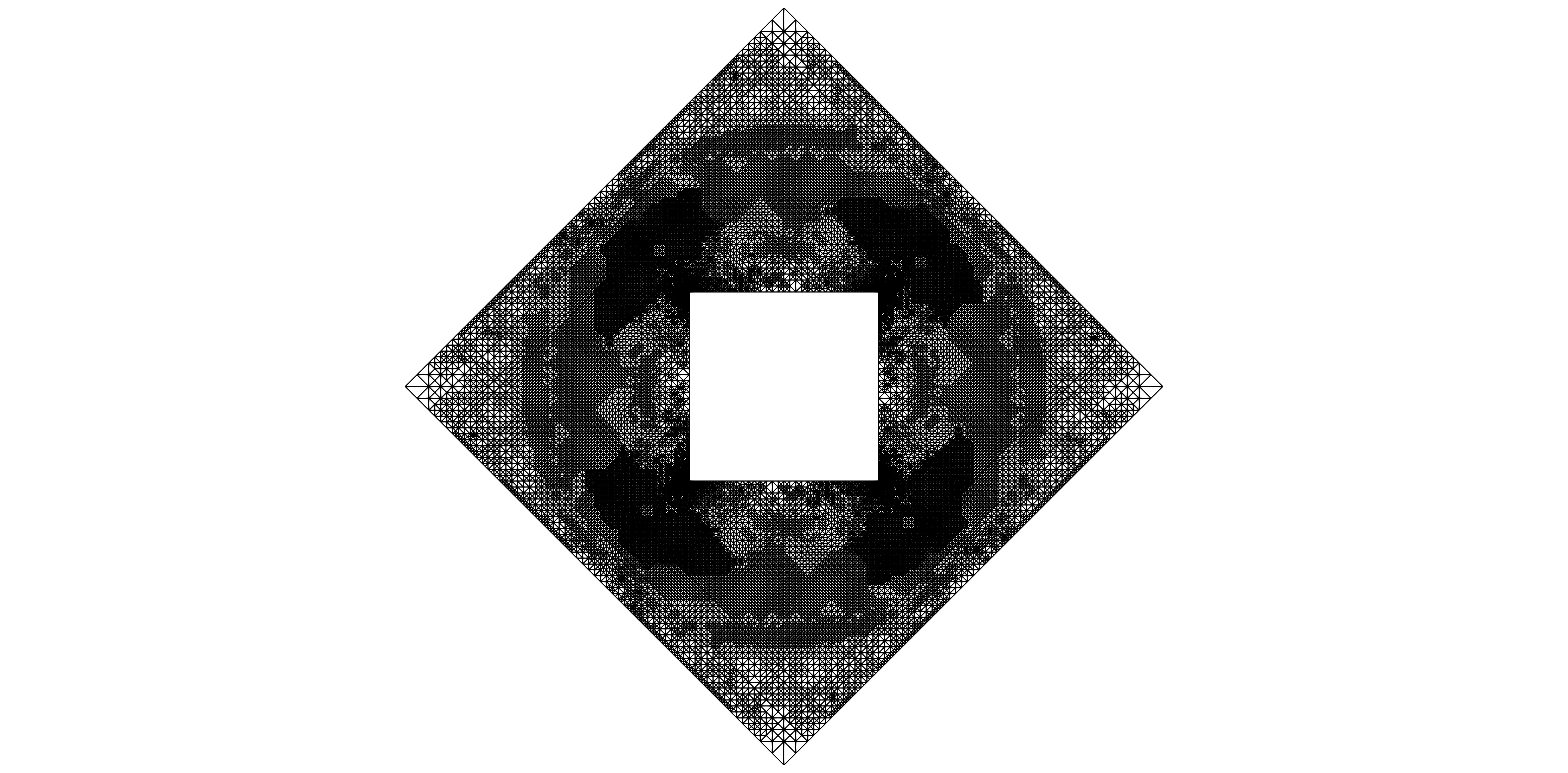}\\
		{\footnotesize 15-th iteration}
	\end{minipage}\\
	\caption{Test 3. Intermediate meshes in the adaptive refinement algorithm for $k=1$ and $\nu=0.4999$.}
	\label{fig:adaptive-square-with-hole-nu04999}
\end{figure}
\begin{figure}[!h]
	\centering
	\begin{minipage}{0.49\linewidth}\centering
		\includegraphics[scale=0.05, trim=40cm 0cm 40cm 0,clip]{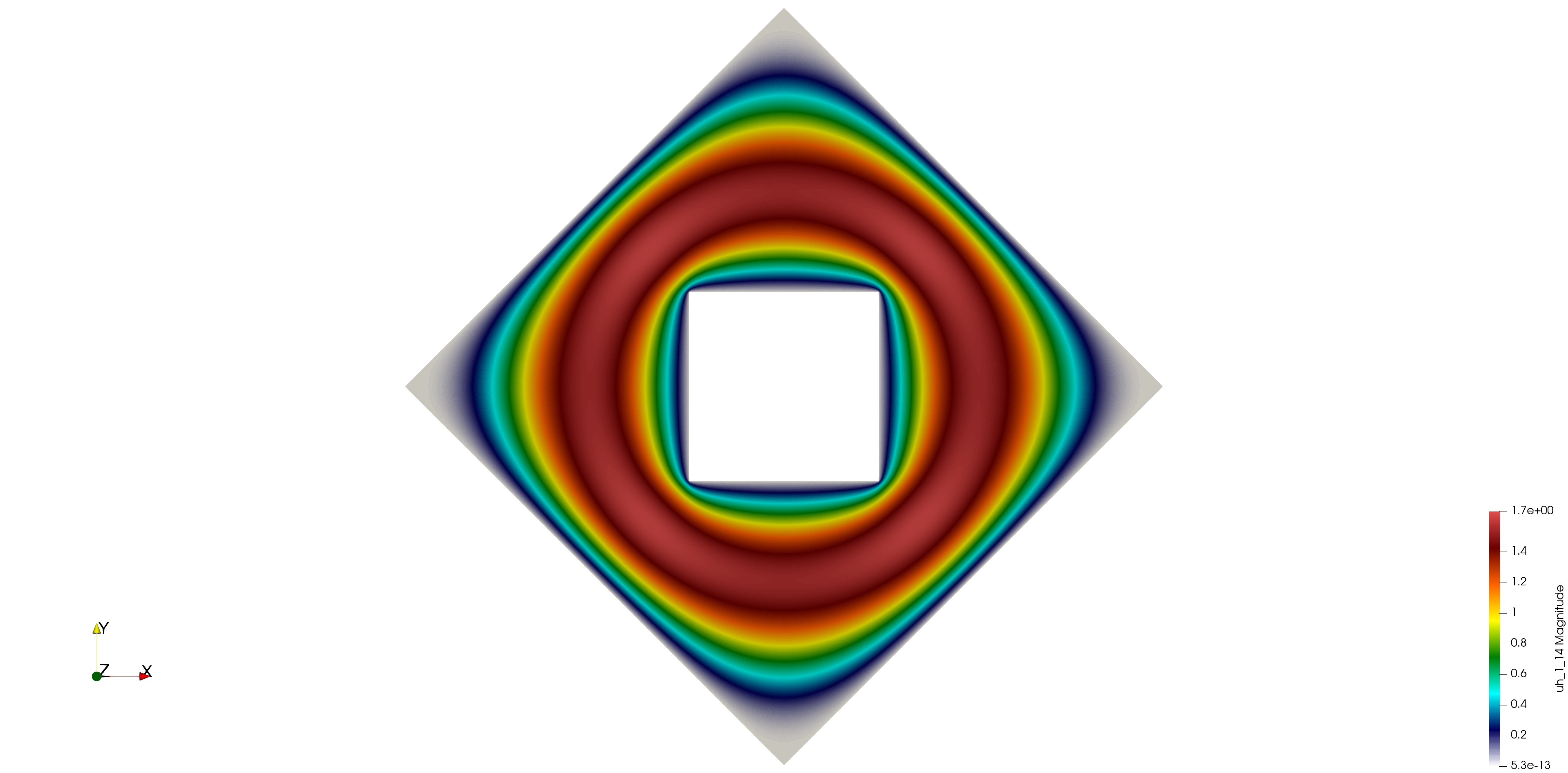}\\
		{\footnotesize $|\bu_h|,\nu=0.35$}
	\end{minipage}
	\begin{minipage}{0.49\linewidth}\centering
		\includegraphics[scale=0.05, trim=40cm 0cm 40cm 0,clip]{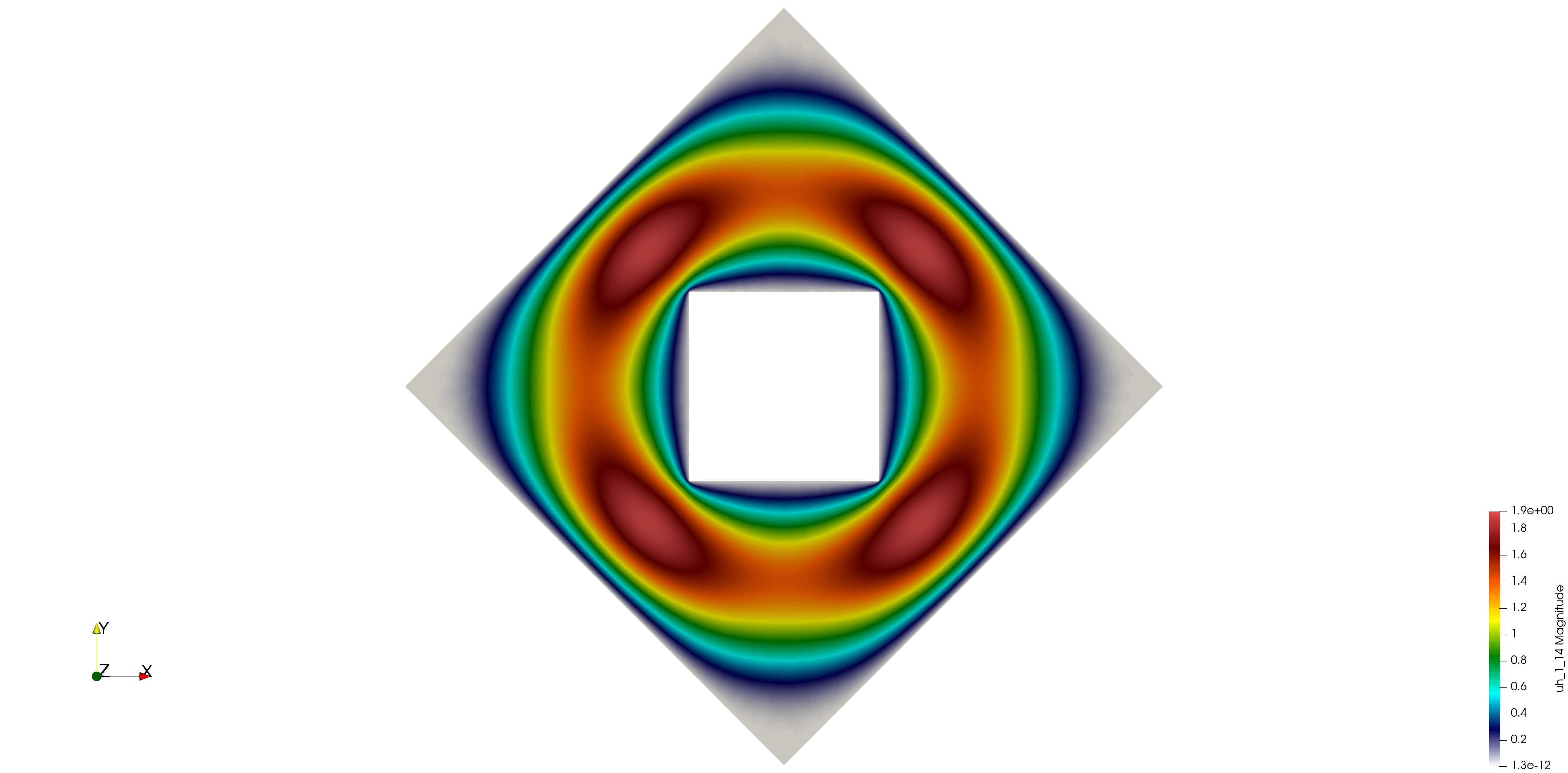}\\
		{\footnotesize $|\bu_h|,\nu=0.4999$}
	\end{minipage}\\
	\begin{minipage}{0.49\linewidth}\centering
		\includegraphics[scale=0.05, trim=40cm 0cm 40cm 0,clip]{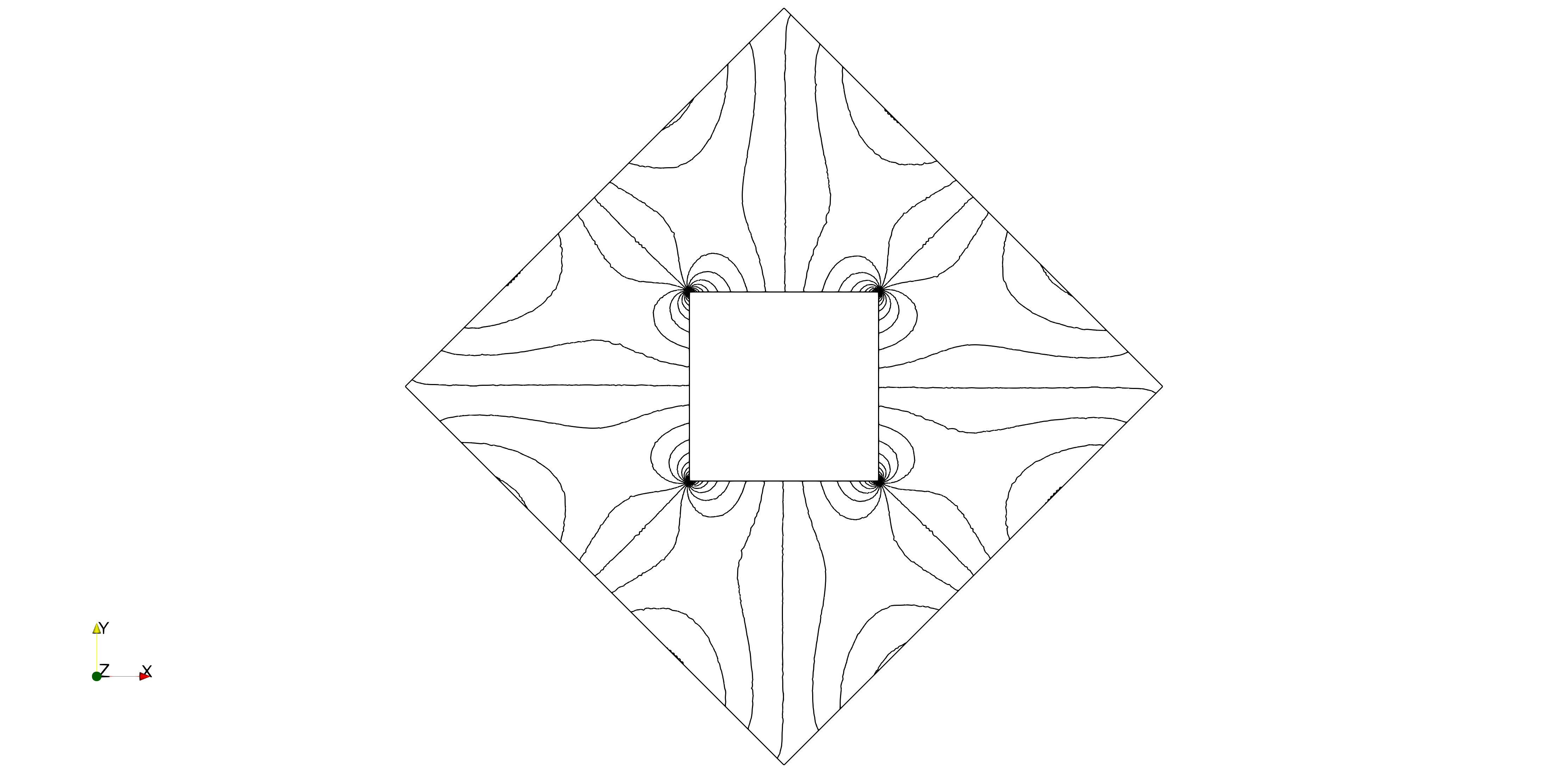}\\
		{\footnotesize Contour $p_h$}
	\end{minipage}
	\begin{minipage}{0.49\linewidth}\centering
		\includegraphics[scale=0.05, trim=40cm 0cm 40cm 0,clip]{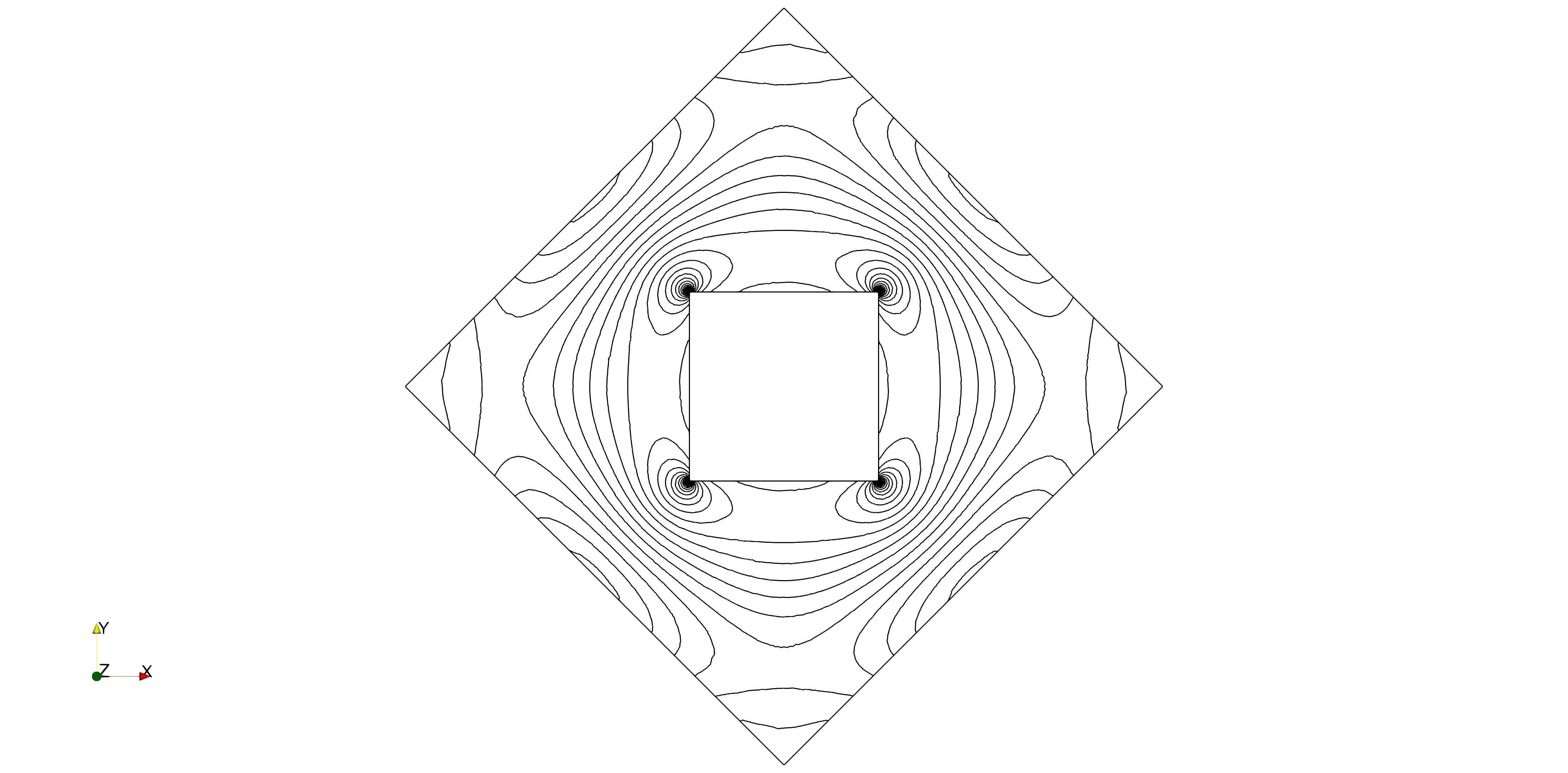}\\
		{\footnotesize Contour $\boldsymbol{\omega}_h$}
	\end{minipage}\\
	\begin{minipage}{0.49\linewidth}\centering
	\includegraphics[scale=0.05, trim=20cm 5cm 23cm 0,clip]{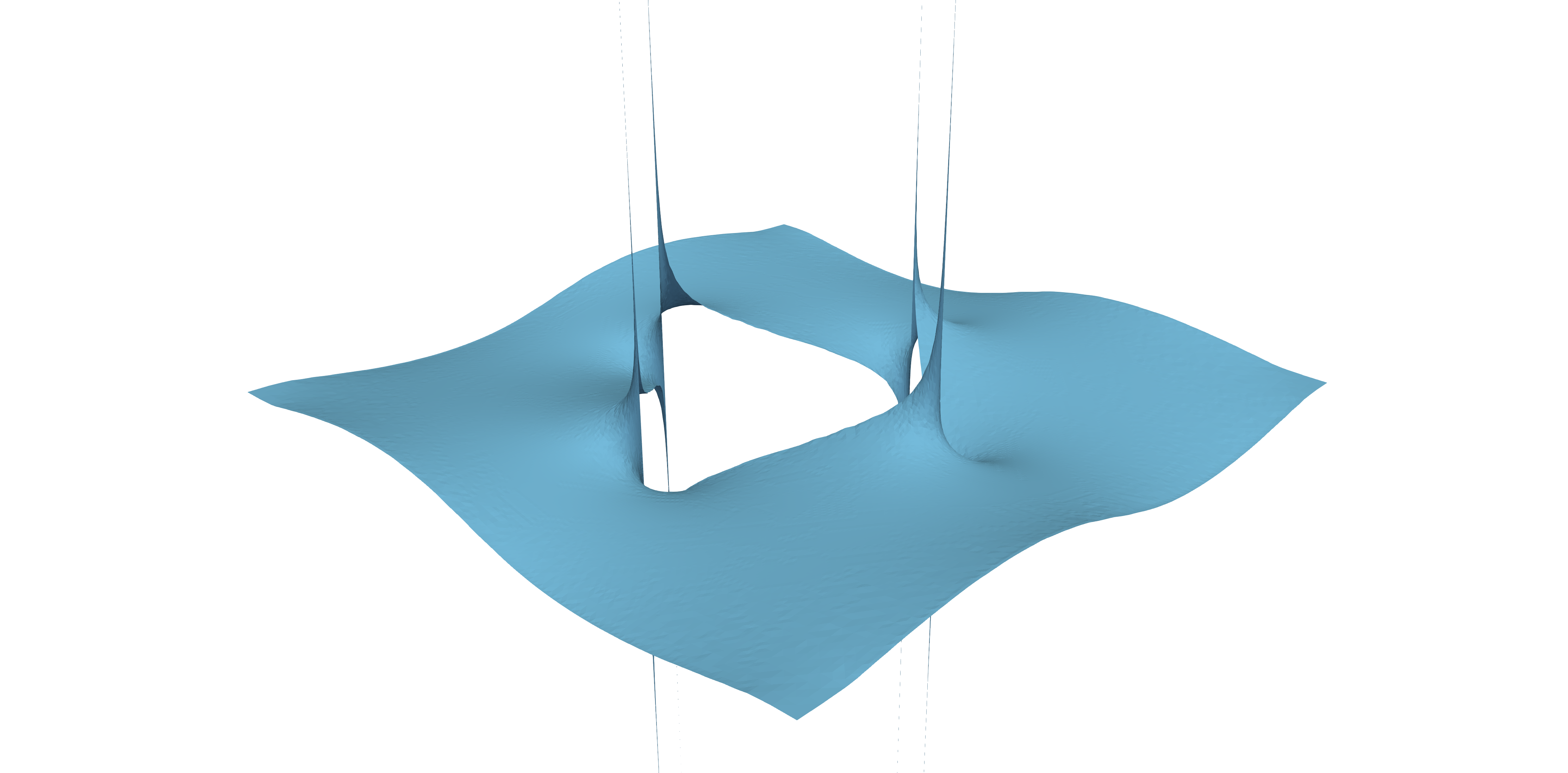}\\
	{\footnotesize Warped $p_h$}
	\end{minipage}
	\begin{minipage}{0.49\linewidth}\centering
		\includegraphics[scale=0.05, trim=20cm 5cm 23cm 0,clip]{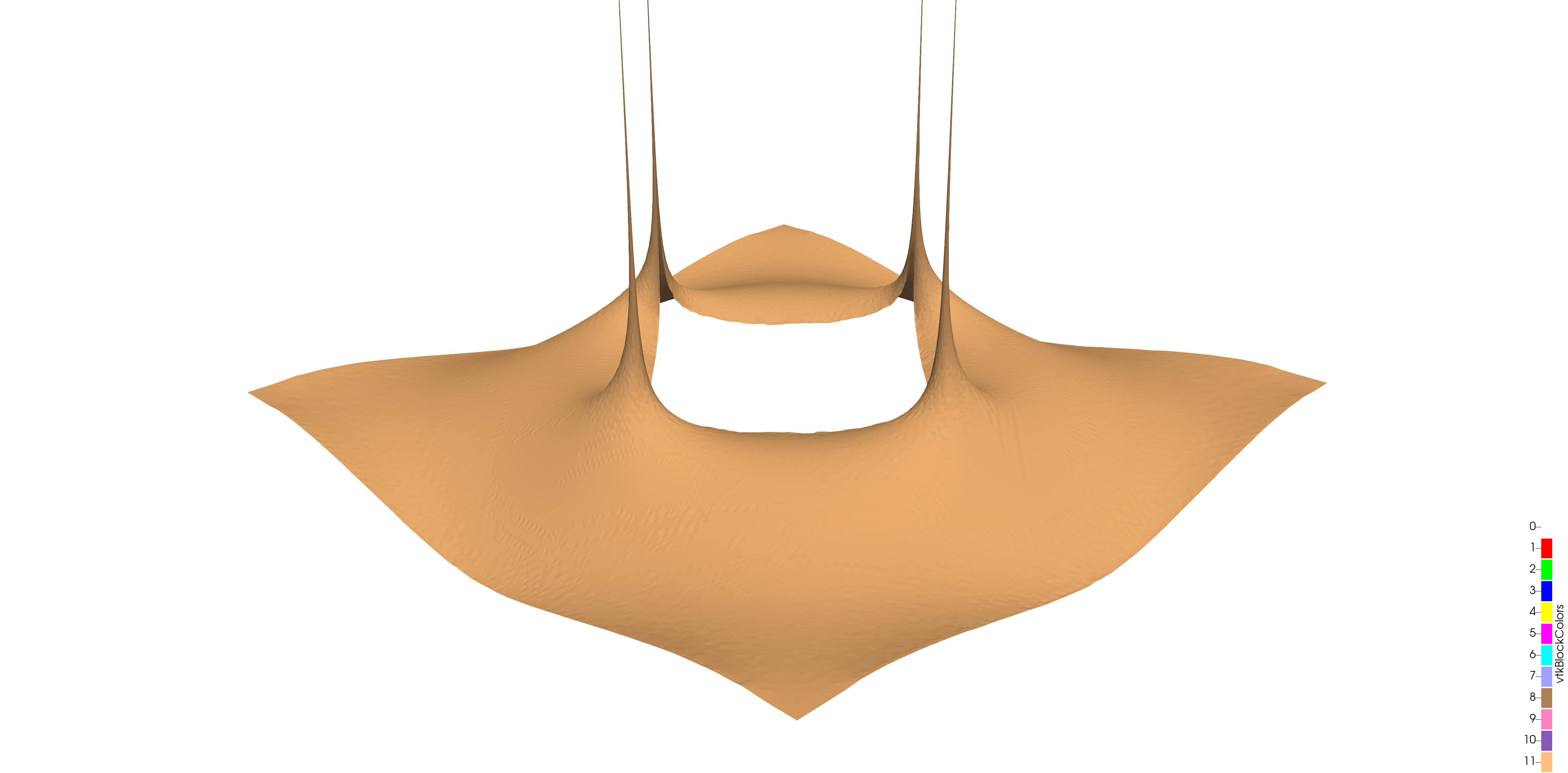}\\
		{\footnotesize Warped $\boldsymbol{\omega}_h$}
	\end{minipage}\\
	\caption{Test 3. Lowest computed eigenmodes for $k=1$ and different values of $\nu$. The high gradients of $p_h$ and high rotations $\boldsymbol{\omega}_h$ near the singularity for $\nu=0.35$ and $\nu=0.4999$ are similar.}
	\label{fig:adaptive-square-with-hole-eigenmodes}
\end{figure}

\subsubsection{Test 4. A 3D L-shaped domain} In this experiment we consider the classical 3D L-shape domain, which is described as
$$
\Omega:=(-1/2,1/2)\times(0,1)\times(-1/2,1/2)\backslash\big((0,1/2)\times(0,1)\times(0,1/2)\big).
$$
This domain has the characteristic of having a singularity along the line $(0,y,0)$, for $y\in[0,1]$, so the convergence with uniform meshes will be suboptimal. The initial mesh for this experiment is depicted in Figure \ref{fig:L-shape-initial-mesh}. For the limiting case, we consider $\alpha^{-1}=1/2$.
\begin{figure}[!h]
	\centering
	\includegraphics[scale=0.06,trim= 40cm 5cm 45cm 3.5cm,, clip]{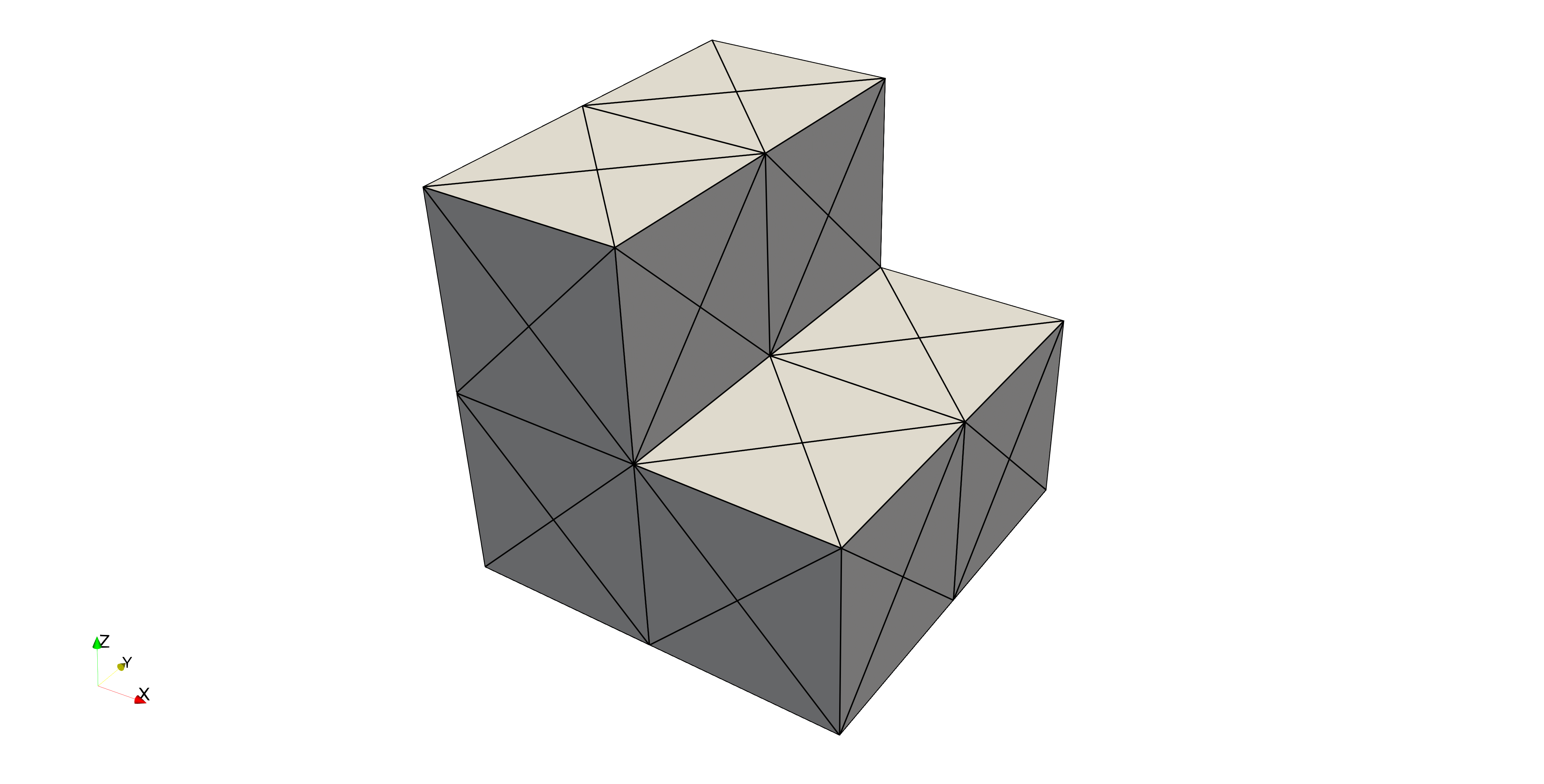}
	\caption{Test 4. Initial shape for the 3D L-shaped domain.}
	\label{fig:L-shape-initial-mesh}
\end{figure}

In Table \ref{tabla:lshape-GG-3D-uniform-vs-adaptive} we observe the behavior of the adaptive scheme for the values of $\nu$ at the lowest polynomial order. The use of uniform meshes for this case results in a experimental convergence rate between $\mathcal{O}(\texttt{dof}^{-\sqrt{3}/3})\approx\mathcal{O}(h^{1.7})$ and $\mathcal{O}(\texttt{dof}^{-2/3})\approx\mathcal{O}(h^2)$, which is the best expected order for this type of refinement. The adaptive scheme in the compressible and nearly incompressible case exhibits a behavior asymptotically similar to $\mathcal{O}(h^2)$, which is the optimal order according to the theory. In Figure \ref{fig:adaptive-lshape-3D-errores-eficiencia} we can observe the curve of these errors and the effectivity of our estimator, proving to be reliable and efficient.  To be more precise, in Table \ref{tabla:lshape-GG-3D-035-vs-04999} we describe the results obtained with different values of $\nu$, where we can observe that the errors and the estimator behave like  $\mathcal{O}(h^2)$. 

On the other hand, in Figure \ref{fig:adaptive-lshape-3D-mallas} we show intermediate meshes in our adaptive iterations. Note that the refinement is concentrated in the zone $(0,y,0)$, therefore the estimator is able to detect and refine close to the singularity. To conclude, we show in Figure \ref{fig:adaptive-lshape-3D-eigenmodes} the first eigenmode for each value of $\nu$. The vector fields $\bu_h$ and $\bw_h$ together with their magnitudes $|\bu_h|$ and $|\bw_h|$, respectively, are presented in the same mesh. The pressure is represented trough a surface contour plot. We note the high pressure gradients and high rotations near the singularity, as is expected.

\begin{table}[!h]\centering
	{\footnotesize\setlength{\tabcolsep}{5pt}
		\caption{Test 4: Comparison between the lowest computed eigenvalue for $k=1$, $\nu=0.35$ and $\nu=0.4999$ with uniform and adaptive refinements.}
		\label{tabla:lshape-GG-3D-uniform-vs-adaptive}
		\begin{center}
			\begin{tabular}{l c l c  |l c l c }
				\toprule
				\multicolumn{4}{c}{$\nu=0.35$} & \multicolumn{4}{|c}{$\nu=0.4999$}\\
				\midrule
				\multicolumn{2}{c}{Uniform} &\multicolumn{2}{c}{Adaptive}& \multicolumn{2}{|c}{Uniform} & \multicolumn{2}{c}{Adaptive}\\
				\midrule
				\texttt{dof}&$\sqrt{\kappa_{h1}}$ & \texttt{dof}&$\sqrt{\kappa_{h1}}$&\texttt{dof}&$\sqrt{\kappa_{h1}}$ & \texttt{dof}&$\sqrt{\kappa_{h1}}$ \\ 
				\midrule
				830		& 7.69255 &830		& 7.69255 &830		& 6.12221 &830		& 6.12221  \\
				6211	& 5.76928 &2930		& 5.97646 &6211		& 5.78695 &4645	    & 5.77952   \\
				48347	& 5.25662 &10132	& 5.49598 &48347	& 5.41616 &23289	& 5.45506 \\
				382099	& 5.10803 &30913	& 5.28891 &382099	& 5.28658 &88486	& 5.34401 \\
				3039395 & 5.06572 &68740	& 5.19462 &3039395	& 5.25196 &138855	& 5.31763 \\
						&		  &129778	& 5.14555 &			&		  &461939	& 5.27548  \\
						& 		  &275240	& 5.10640 &			& 		  &679613   & 5.26666 \\
						&  		  &550531	& 5.08668 &			& 	      &1690232	& 5.25403 \\
						&  		  &882797	& 5.07516 &			& 		  &2828623  & 5.24948 \\
						&  		  &1849627	& 5.06375 &			& 		  &  		&			\\
						&  		  &1856515	& 5.06367 &			& 		  &  		&			\\
						&  		  &3753754	& 5.05753 &			& 		  &  		&			\\
				\midrule
				Order	&$\mathcal{O}(N^{-0.61})$		&Order	&$\mathcal{O}(N^{-0.66})$& Order	&$\mathcal{O}(N^{-0.62})$		&Order	&$\mathcal{O}(N^{-0.66})$ \\
				$\sqrt{\kappa_1}$	& 5.04874 		&$\sqrt{\kappa_1}$	& 5.04874 & $\sqrt{\kappa_1}$	& 5.24009 		&$\sqrt{\kappa_1}$	& 5.24009 \\
				\bottomrule             
			\end{tabular}
	\end{center}}
\end{table}
\begin{table}[!h]
	{\footnotesize
		\caption{Test 4: Computed errors and effectivity indexes on the adaptively refinement meshes for $k=1$, $\nu=0.35$ and $\nu=0.4999$.}
		\label{tabla:lshape-GG-3D-035-vs-04999}
		\begin{center}
			\begin{tabular}{c c c c|c c c c }
				\toprule
				\multicolumn{3}{c}{$\nu=0.35$} &&& \multicolumn{3}{c}{$\nu=0.4999$}\\
				\midrule
				$\err(\kappa_{h1})$&$\zeta^2$&$\eff(\kappa_{h1})$ &&& $\err(\kappa_{h1})$&$\zeta^2$&$\eff(\kappa_{h1})$ \\ 
				\midrule
				2.64380e+00	&2.33470e+03& 1.13239e-03 &&&8.82122e-01	&1.04890e+03& 8.40995e-04  \\
				9.27714e-01	&4.57985e+02& 2.02564e-03 &&&5.39432e-01	&3.22277e+02& 1.67381e-03  \\
				4.47229e-01	&1.48726e+02& 3.00707e-03 &&&2.14972e-01	&8.02553e+01& 2.67860e-03 \\
				2.40160e-01	&6.52344e+01& 3.68149e-03 &&&1.03919e-01	&2.97943e+01& 3.48787e-03  \\
				1.45869e-01	&3.69662e+01& 3.94601e-03 &&&7.75365e-02	&2.23126e+01& 3.47501e-03  \\
				9.67976e-02	&2.39336e+01& 4.04443e-03 &&&3.53827e-02	&9.90877e+00& 3.57084e-03  \\
				5.76481e-02	&1.44241e+01& 3.99665e-03 &&&2.65636e-02	&7.54860e+00& 3.51901e-03 \\
				3.79299e-02 &9.09617e+00& 4.16987e-03 &&&1.39398e-02	&4.34538e+00& 3.20796e-03 \\
				2.64066e-02 &6.67283e+00& 3.95733e-03 &&&9.38544e-03    &3.01470e+00& 3.11322e-03 \\
				1.50016e-02 &4.07549e+00& 3.68093e-03 &&& 				&			& 			  \\
				1.49166e-02 &4.06522e+00& 3.66932e-03 &&& 				&			& 			  \\
				8.77733e-03 &2.53519e+00& 3.46220e-03 &&& 				&			& 			  \\
				\bottomrule             
			\end{tabular}
	\end{center}}
\end{table}

\begin{figure}[!h]
	\centering
	\begin{minipage}{0.495\linewidth}\centering
		\includegraphics[scale=0.17, trim=40cm 0cm 40cm 0,clip]{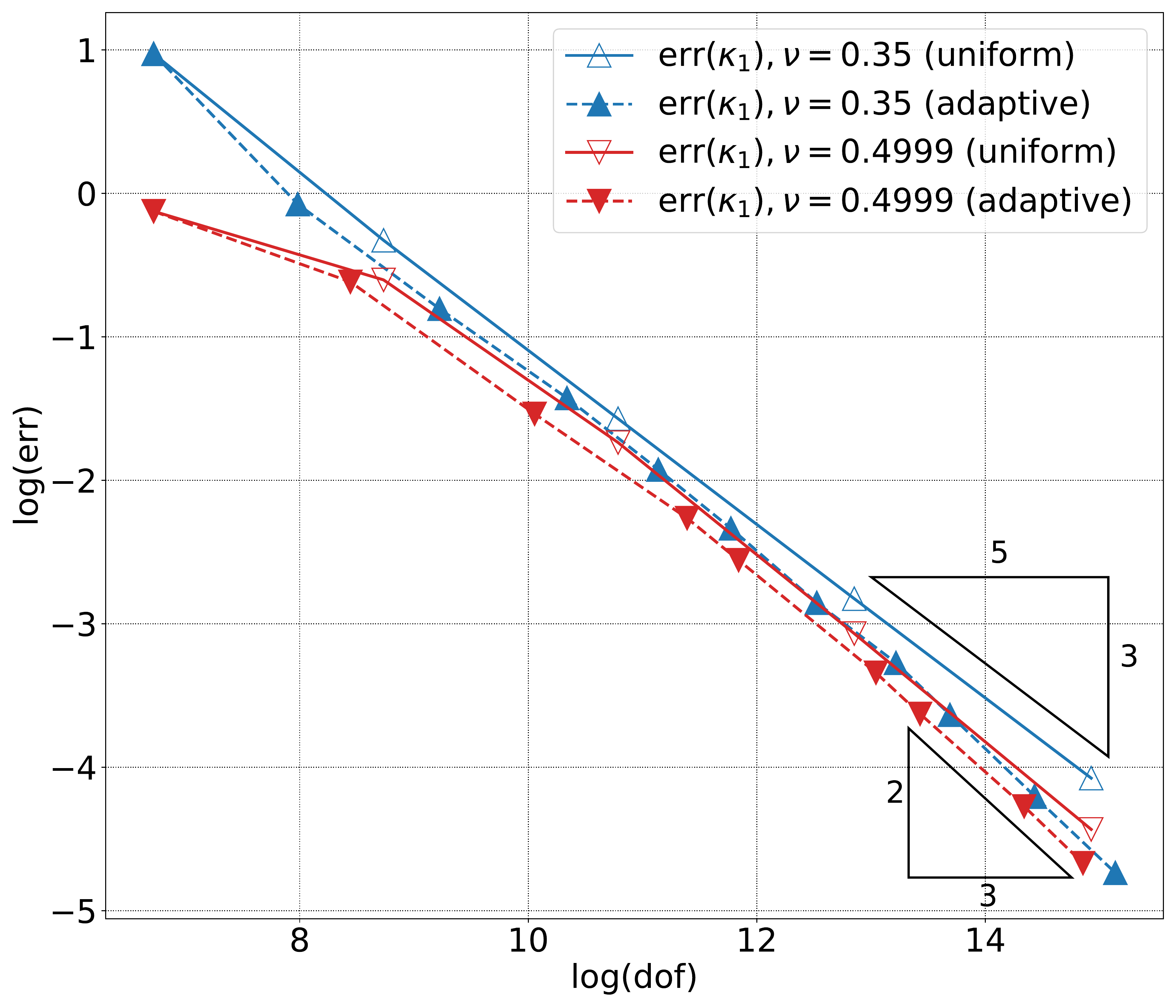}\\
	\end{minipage}
	\begin{minipage}{0.495\linewidth}\centering
		\includegraphics[scale=0.17, trim=40cm 0cm 40cm 0,clip]{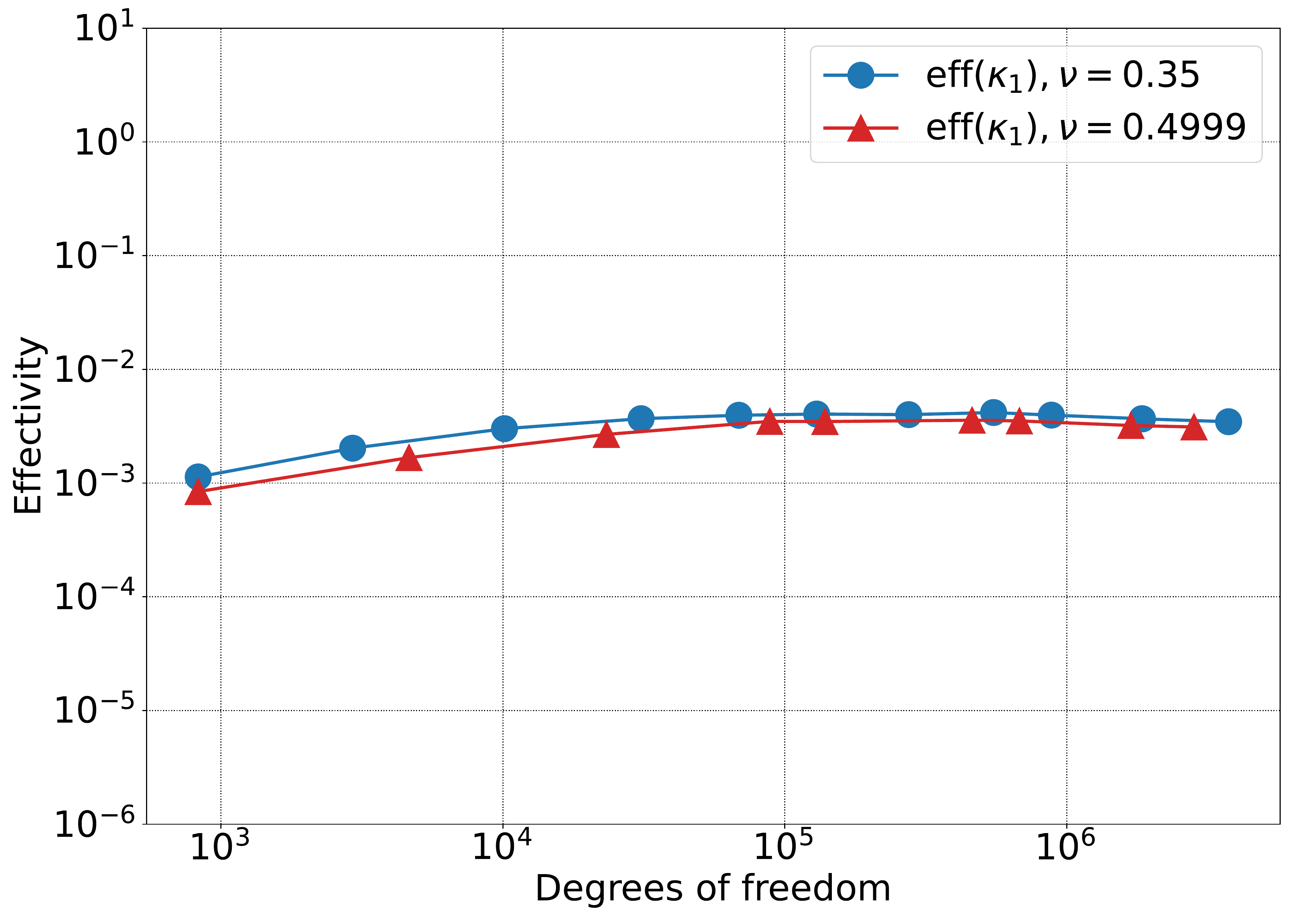}\\
	\end{minipage}\\
	\caption{Test 4. Error and effectivity curves for $k=1$, $\nu=0.35$ and $\nu=0.4999$.}
	\label{fig:adaptive-lshape-3D-errores-eficiencia}
\end{figure}
\begin{figure}[!h]
	\centering
	\begin{minipage}{0.49\linewidth}
		\centering
		\includegraphics[scale=0.06,trim= 40cm 5cm 45cm 3.5cm,, clip]{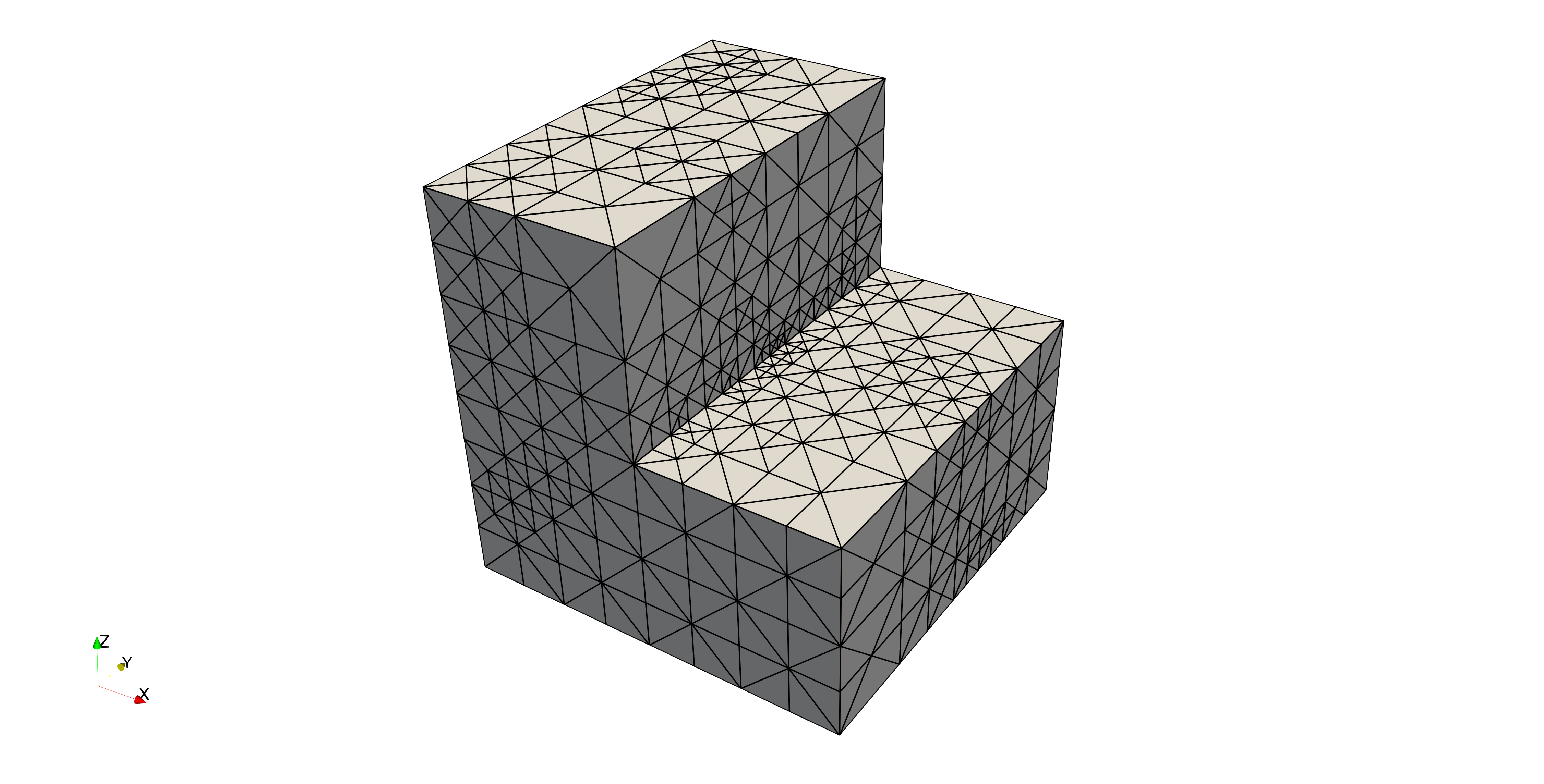}\\
		{\footnotesize $\nu=0.35$, 7-th iteration}
	\end{minipage}
	\begin{minipage}{0.49\linewidth}
		\centering
		\includegraphics[scale=0.06,trim= 40cm 5cm 45cm 3.5cm,, clip]{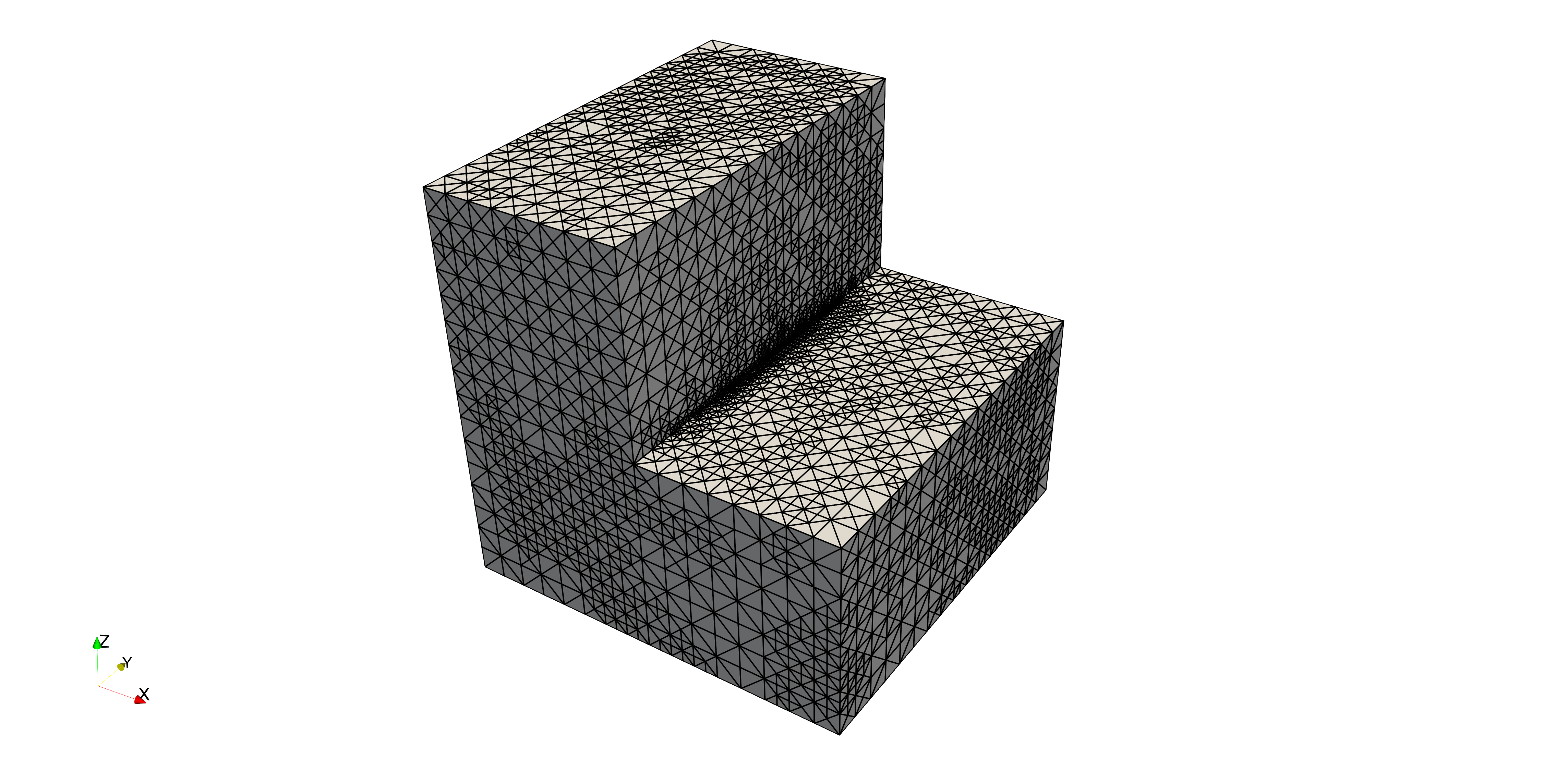}\\
		{\footnotesize $\nu=0.35$, 12-th iteration}
	\end{minipage}\\
	\begin{minipage}{0.49\linewidth}
		\centering
		\includegraphics[scale=0.06,trim= 40cm 5cm 45cm 3.5cm,, clip]{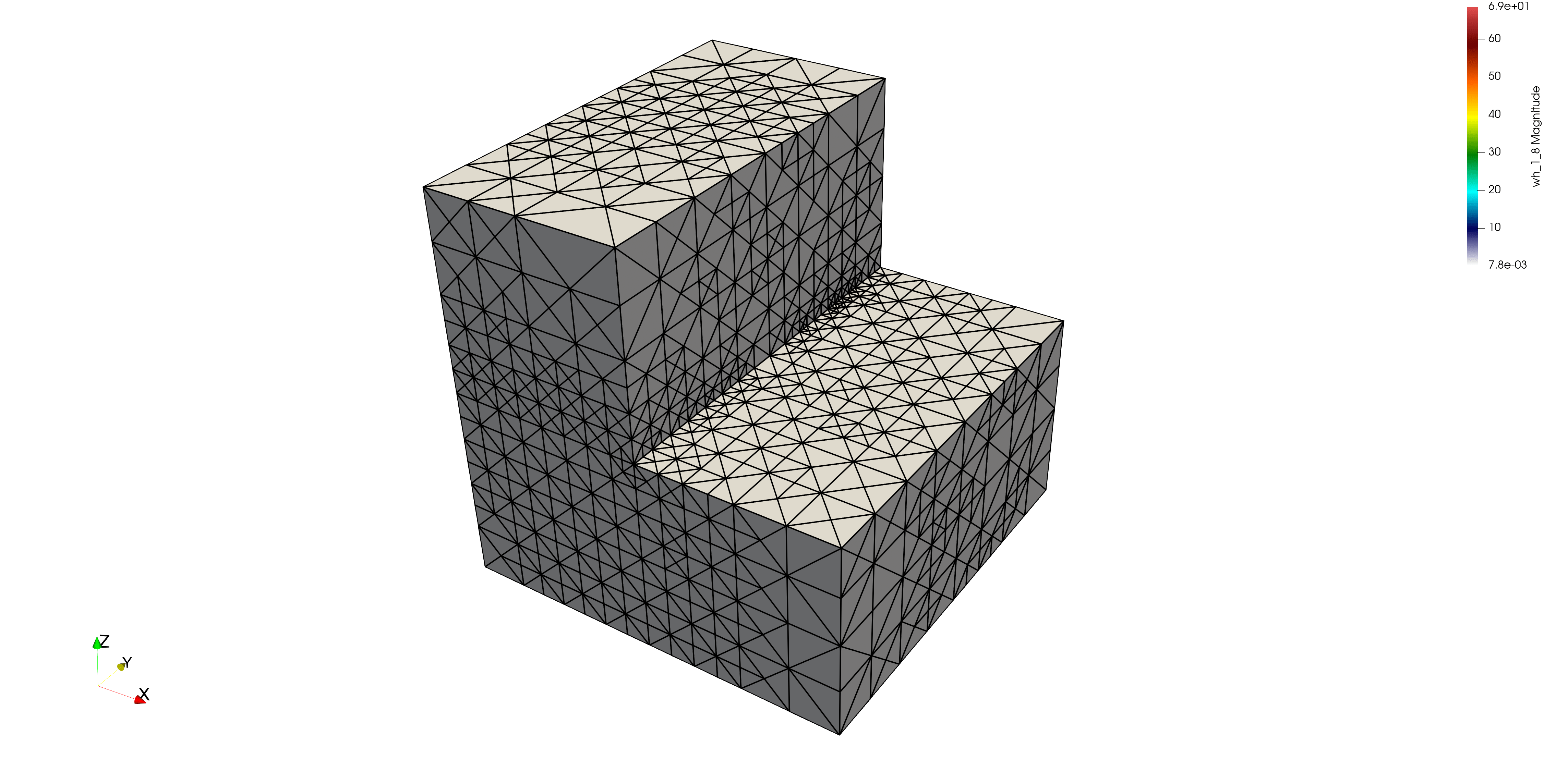}\\
		{\footnotesize $\nu=0.4999$, 7-th iteration}
	\end{minipage}
		\begin{minipage}{0.49\linewidth}
		\centering
		\includegraphics[scale=0.06,trim= 40cm 5cm 45cm 3.5cm,, clip]{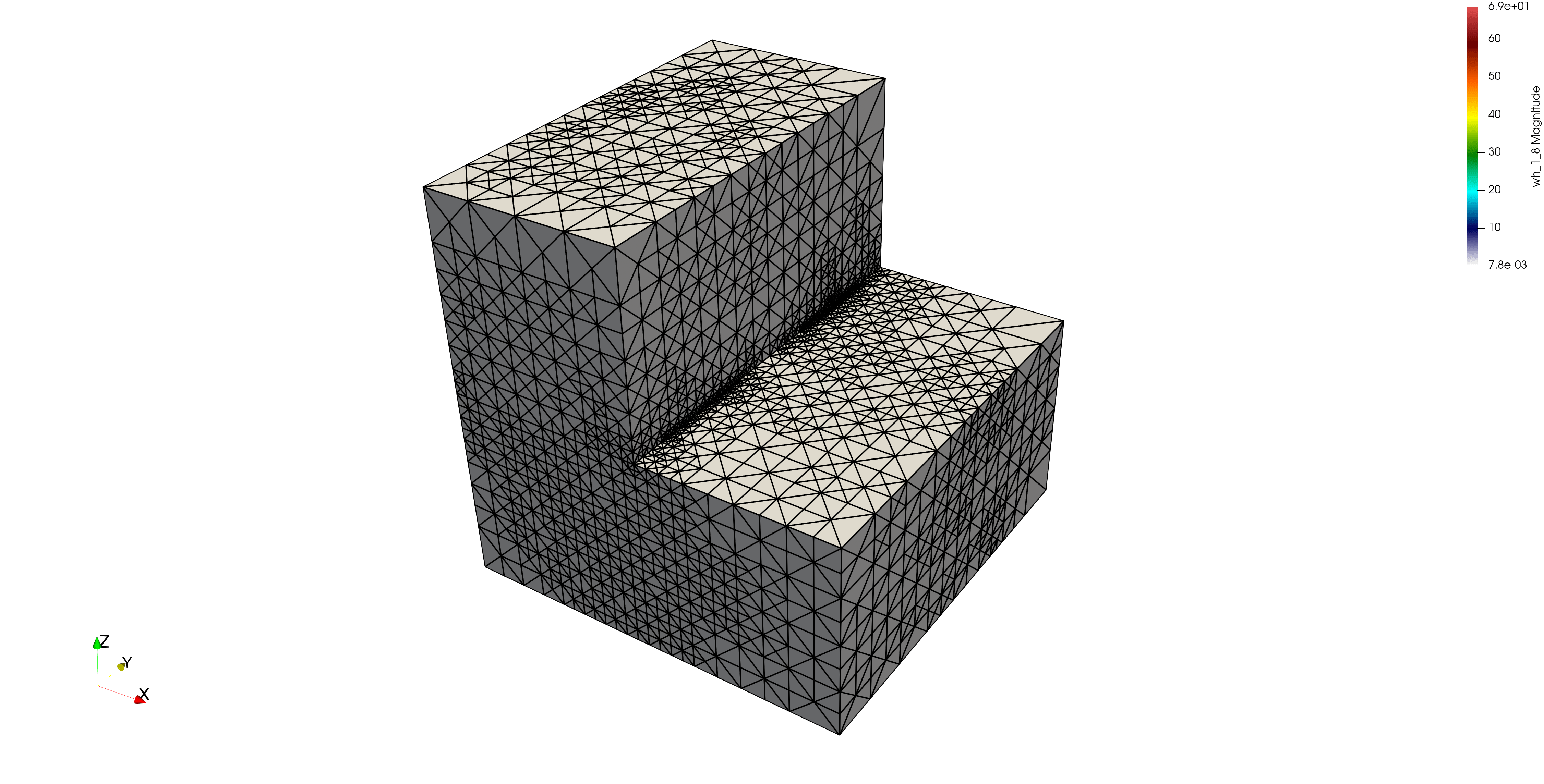}\\
		{\footnotesize $\nu=0.4999$, 9-th iteration}
	\end{minipage}
	\caption{Test 4. Intermediate meshes in the adaptive refinement algorithm for $k=1$, $\nu=0.35$ and $\nu=0.4999$.}
	\label{fig:adaptive-lshape-3D-mallas}
\end{figure}
\begin{figure}[!h]
	\centering
	\begin{minipage}{0.49\linewidth}
		\centering
		\includegraphics[scale=0.06,trim= 40cm 5cm 45cm 3.5cm,, clip]{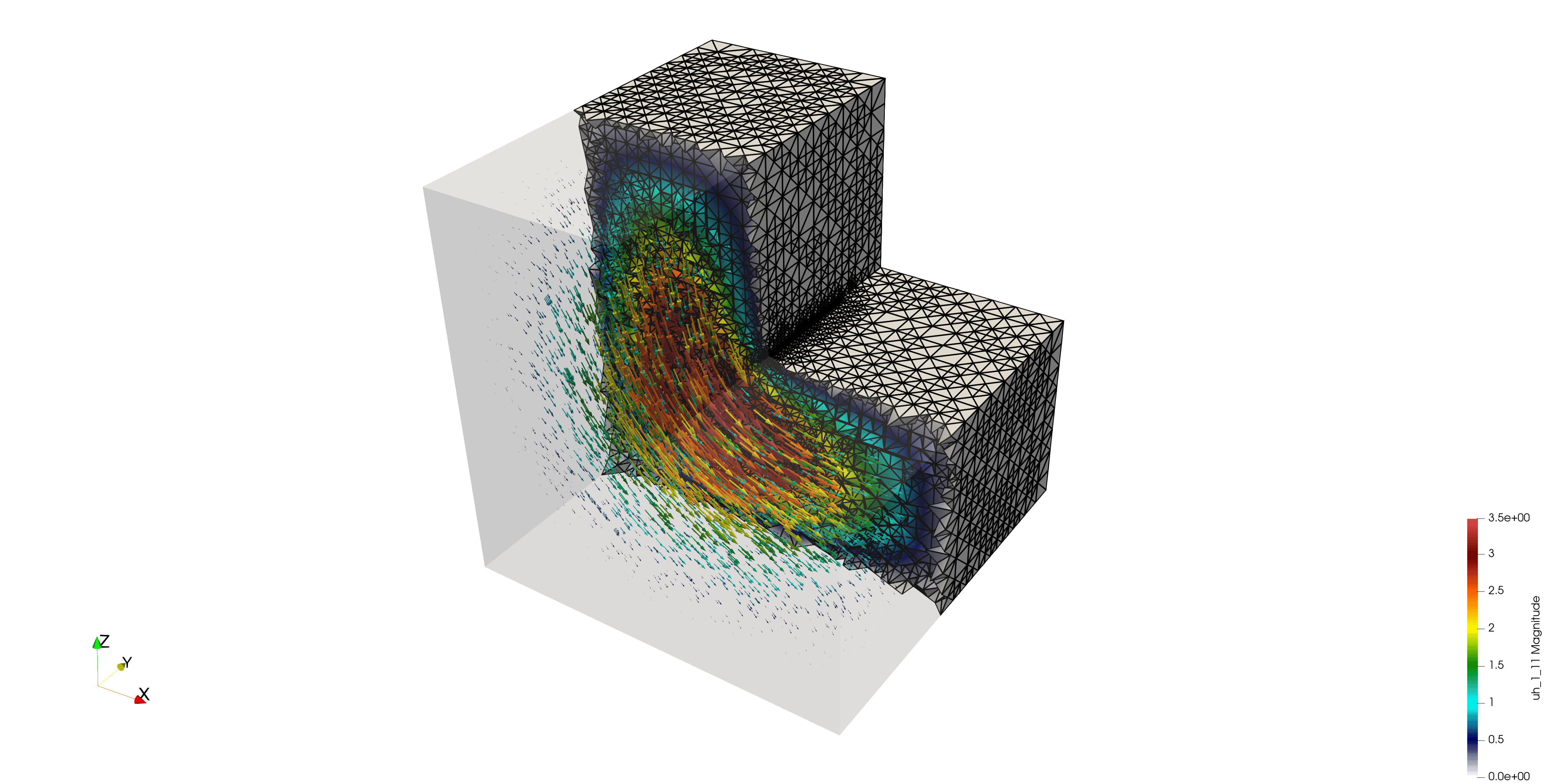}\\
		{\footnotesize $\bu_h,|\bu_h|,\nu=0.35$}
	\end{minipage}
	\begin{minipage}{0.49\linewidth}
		\centering
		\includegraphics[scale=0.06,trim= 40cm 5cm 45cm 3.5cm,, clip]{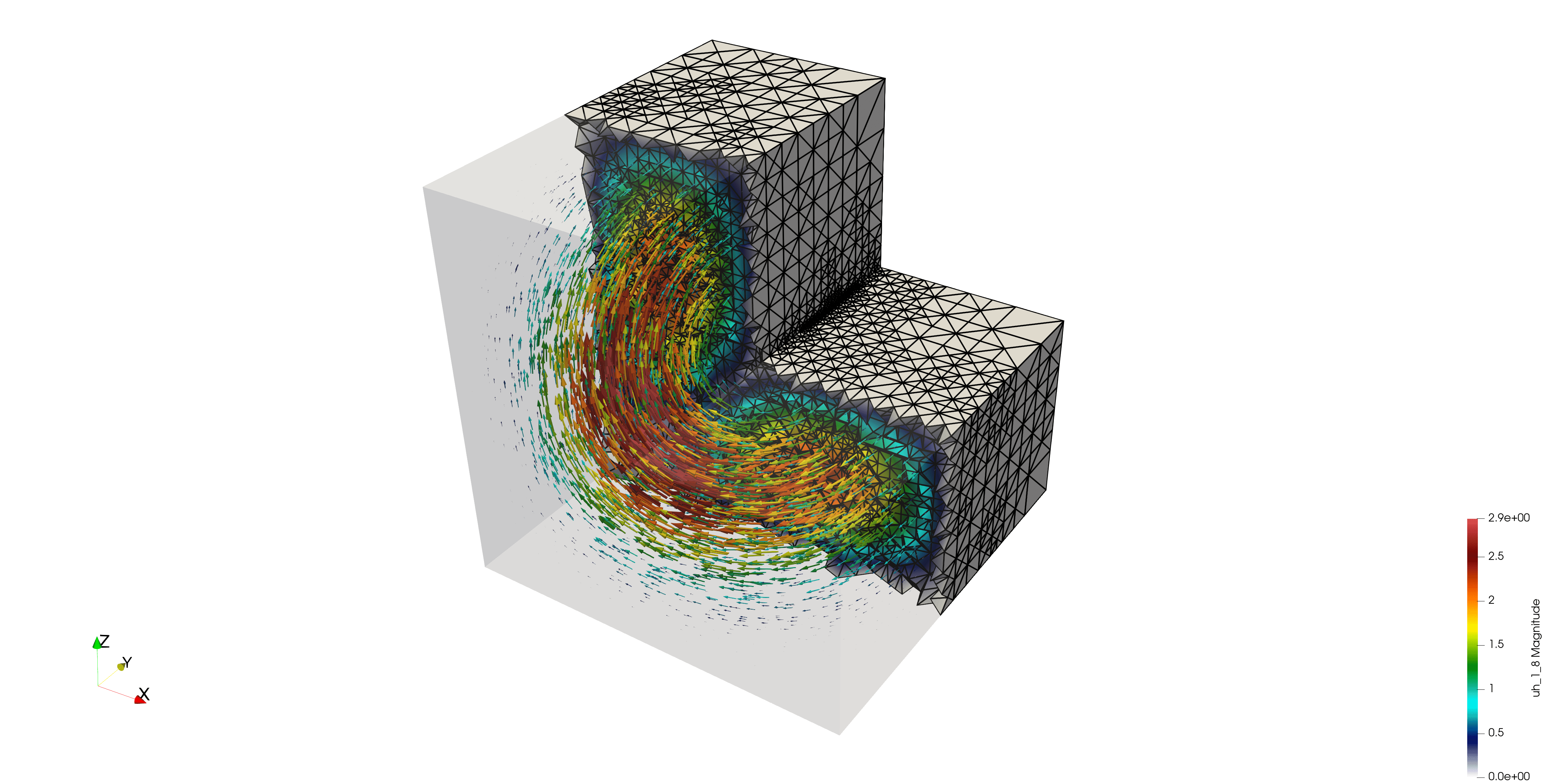}\\
		{\footnotesize $\bu_h,|\bu_h|,\nu=0.4999$}
	\end{minipage}\\
	\begin{minipage}{0.49\linewidth}
		\centering
		\includegraphics[scale=0.06,trim= 40cm 5cm 45cm 3.5cm,, clip]{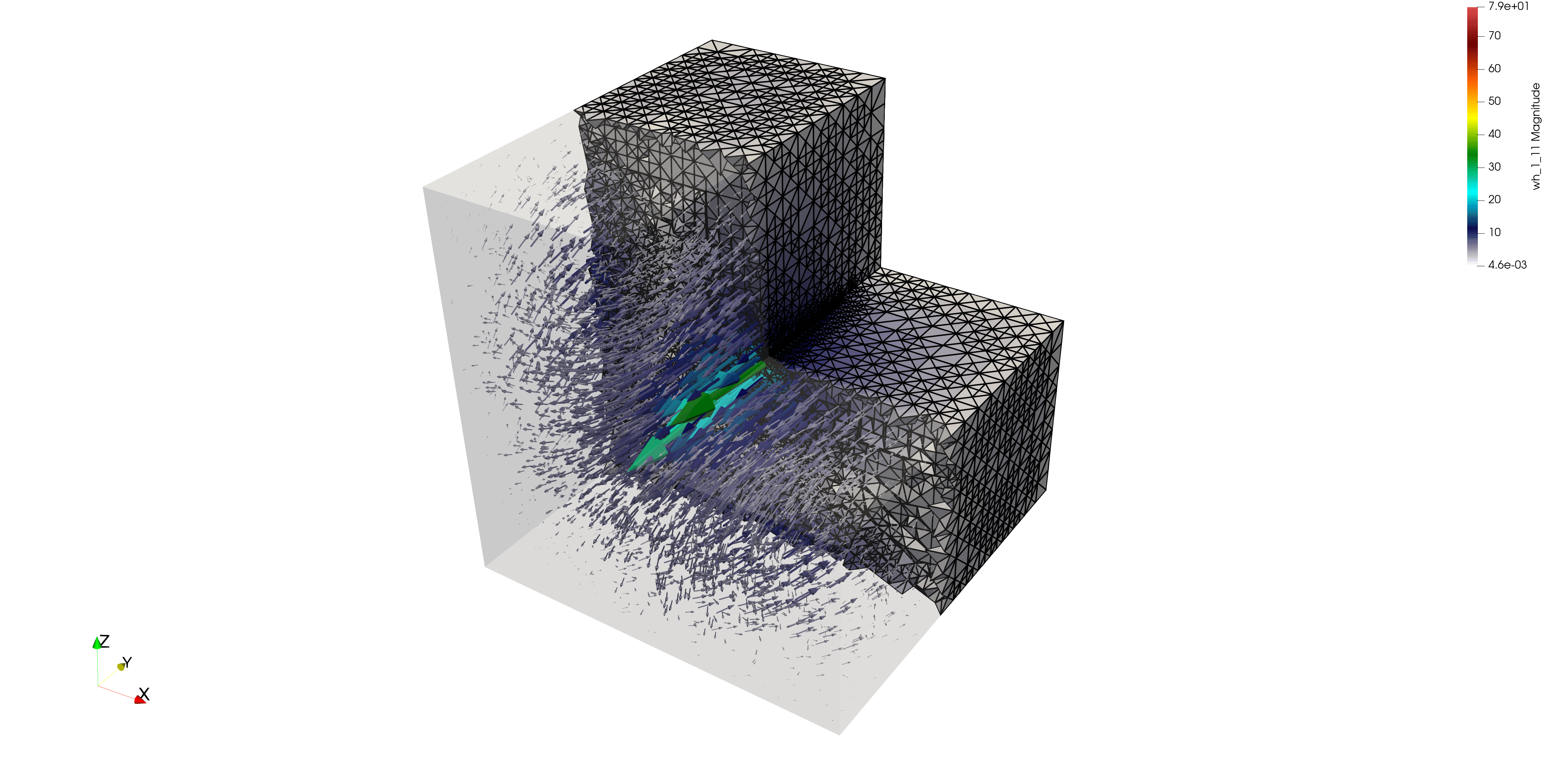}\\
		{\footnotesize $\bw_h,|\bw_h|,\nu=0.35$}
	\end{minipage}
	\begin{minipage}{0.49\linewidth}
		\centering
		\includegraphics[scale=0.06,trim= 40cm 5cm 45cm 3.5cm,, clip]{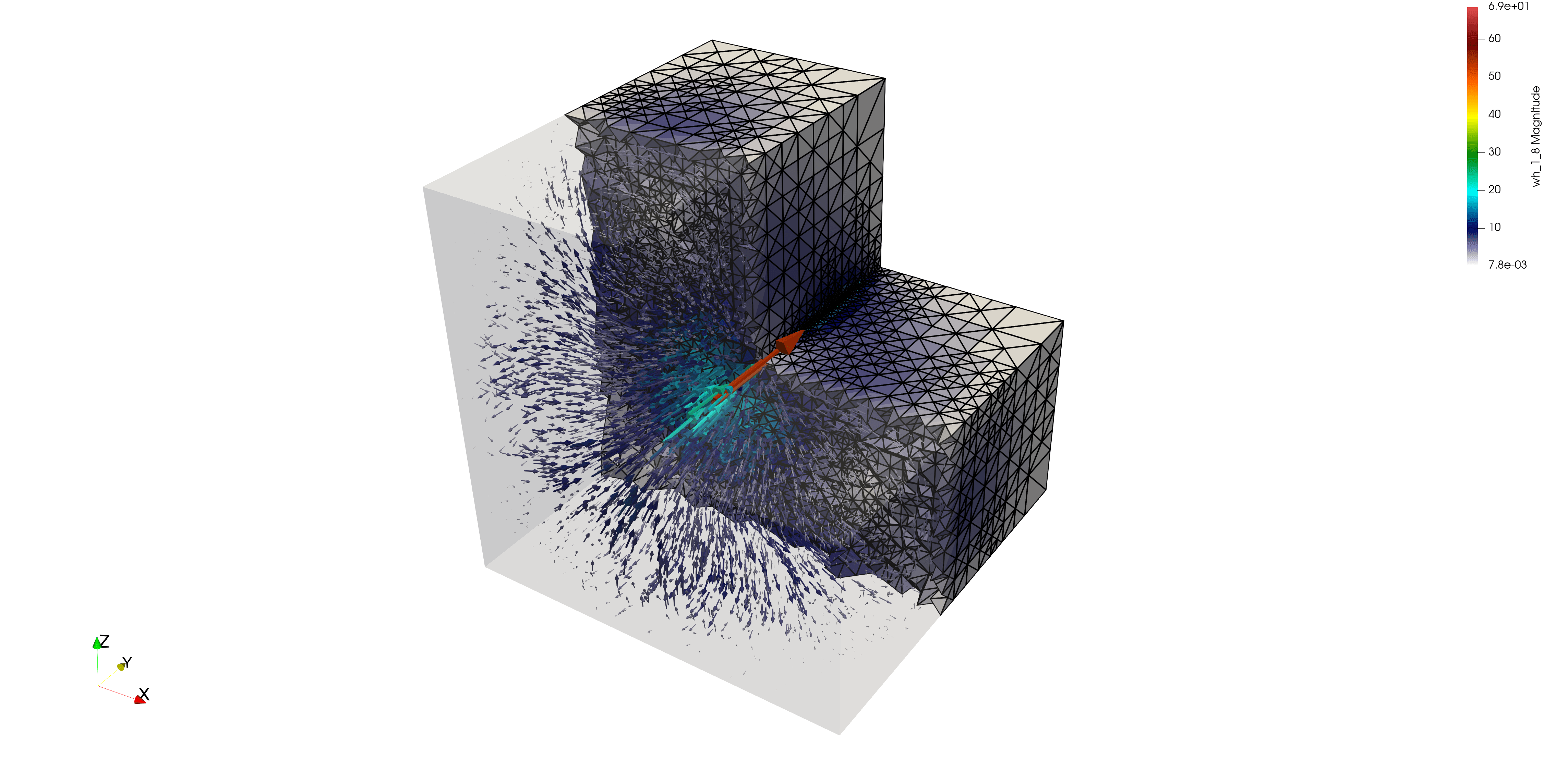}\\
		{\footnotesize $\bw_h,|\bw_h|,\nu=0.4999$}
	\end{minipage}\\
	\begin{minipage}{0.49\linewidth}
		\centering
		\includegraphics[scale=0.06,trim= 40cm 5cm 45cm 3.5cm,, clip]{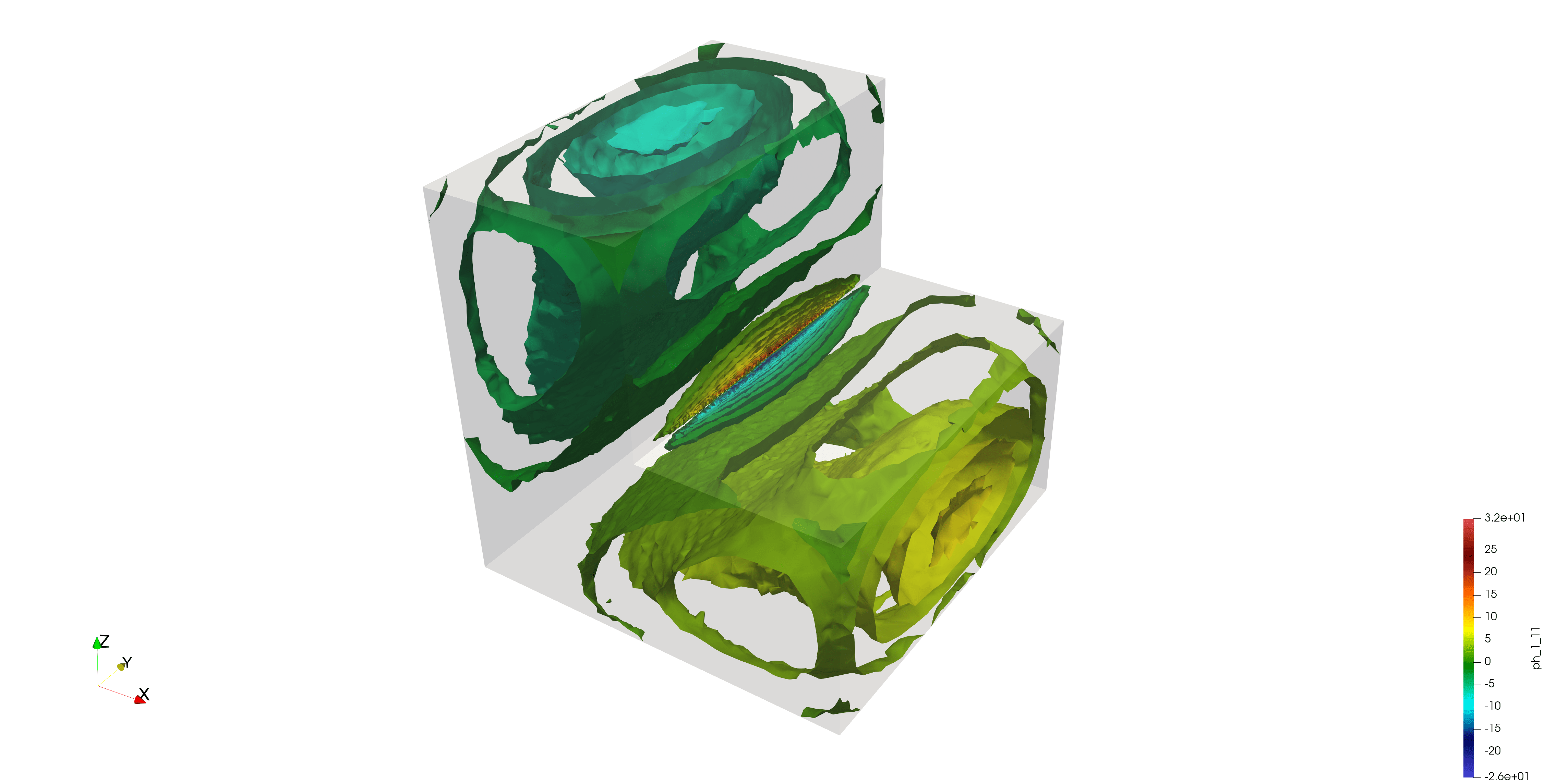}\\
		{\footnotesize $p_h, \nu=0.35$}
	\end{minipage}
	\begin{minipage}{0.49\linewidth}
		\centering
		\includegraphics[scale=0.06,trim= 40cm 5cm 45cm 3.5cm,, clip]{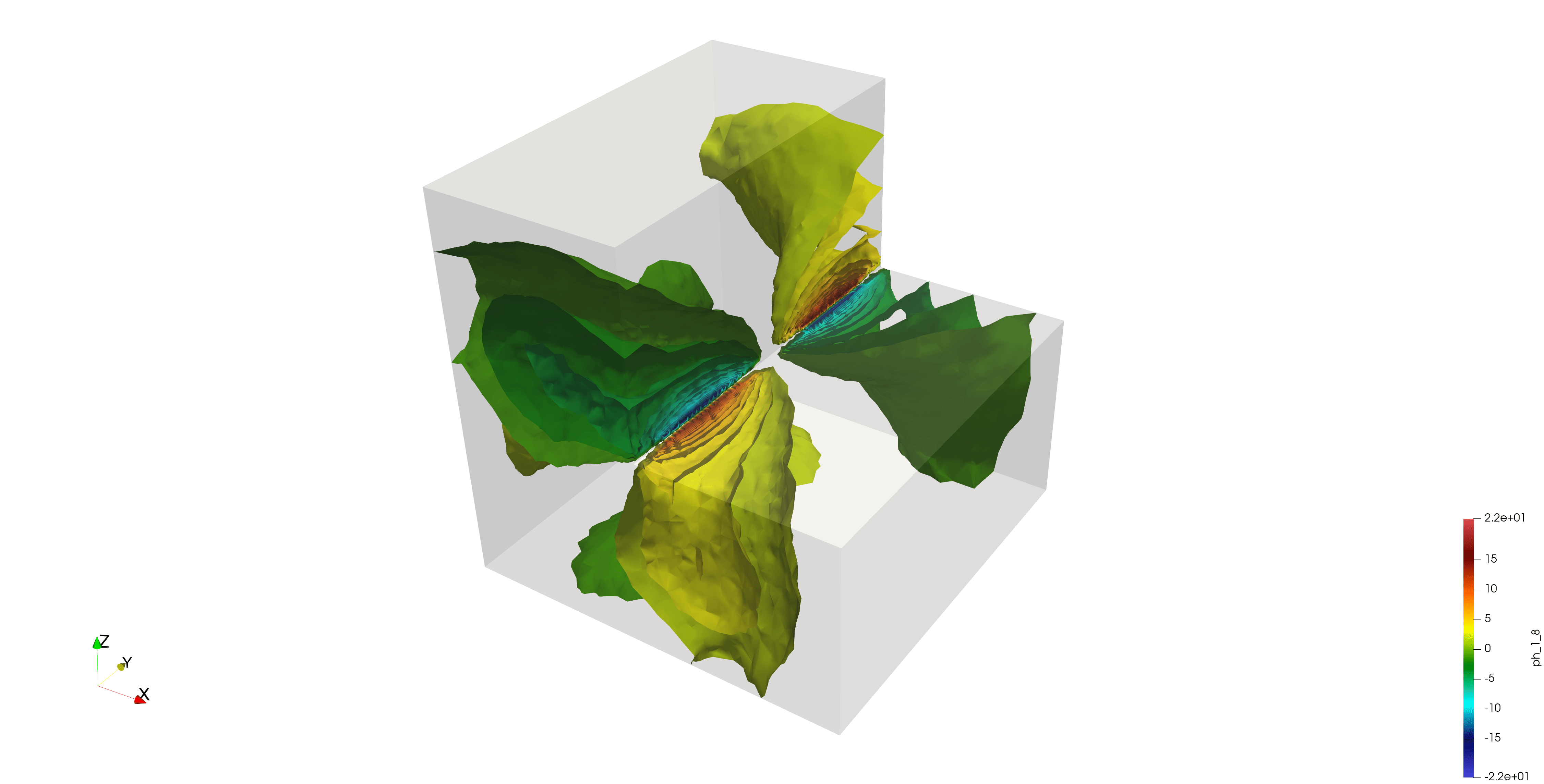}\\
		{\footnotesize $p_h, \nu=0.4999$}
	\end{minipage}
	\caption{Test 4. Lowest computed eigenmodes for $k=1$ and different values of $\nu$ in the last adaptive iteration.}
	\label{fig:adaptive-lshape-3D-eigenmodes}
\end{figure}
\section{Conclusions}
In this paper, we have introduced an alternative formulation to study the elasticity eigenvalue problem, where the main unknowns are the displacement, rotations and pressure. More precisely, we have used this formulation to analyze a mixed finite element method, based in polynomials, to approximate the eigenvalues and associated eigenfunctions. In the a priori analysis, we have proved convergence of the proposed method and optimal order of convergence. The numerical tests, in two and three dimension, for arbitrary polynomial degrees, show two important features: in one hand, that the method is spurious free, and on the other, the stability of the numerical method when we are close to the limit case (namely $\nu=1/2$). Also, we have proposed an a posteriori error estimator for the eigenvalue problem, that is reliable and efficient. The numerical tests that we reported show successfully that the optimal order of convergence is recovered for non smooth eigenfunctions, when the adaptive strategy is performed. Furthermore, the experiments revealed that the estimator performs an adaptive refinement that recovers the optimal order of convergence of the method. Moreover,  the tests show that even for polynomial degrees $k>1$, which are beyond the scope of the presented analysis, the estimator works correctly.
\bibliographystyle{siamplain}
\bibliography{LRV_navier_lame}
\end{document}